\newcommand\keywords[1]{
    \begingroup
   \let\and\\
    \par
    \noindent\textit{Keywords:} #1\par
    \endgroup
}
\newcommand\mathsubj[1]{
    \begingroup
   \let\and\\
    \par
    \noindent\textit{2020 Mathematics Subject Classification:} #1\par
    \endgroup
}
\providecommand{\st}{\; \vert \;}
\newcommand\stSymbol[1][]{%
\nonscript\;#1\vert
\allowbreak
\nonscript\;
\mathopen{}}
\DeclarePairedDelimiterX\set[1]\{\}{%
\renewcommand\st{\stSymbol[\delimsize]}
#1
}
\newtheorem{theorem}{Theorem}[section]
\newtheorem{lemma}[theorem]{Lemma}
\newtheorem{prop}[theorem]{Proposition}
\theoremstyle{definition}
\newtheorem{defn}[theorem]{Definition}
\newtheorem{cor}[theorem]{Corollary}
\newtheorem{rem}[theorem]{Remark}
\DeclareMathOperator*\loc{loc}
\DeclareMathOperator*\supp{supp}
\DeclareMathOperator*\tr{tr}
\DeclareMathOperator*\dom{dom}
\DeclareMathOperator*\var{var}
\newcommand*\diff{\mathop{} d}
\newcommand\numberthis{\addtocounter{equation}{1}\tag{\theequation}}
\newcommand{\mres}{\mathbin{\vrule height 1.7ex depth 0pt width
		0.13ex\vrule height 0.13ex depth 0pt width 1.3ex}}
\numberwithin{equation}{section}
\numberwithin{figure}{section}
\author{Bohdan Bulanyi \footnote{Universit\'e catholique de Louvain, Institut de Recherche en Math\'ematique et Physique, Chemin du Cyclotron 2 bte L7.01.01, 1348 Louvain-la-Neuve, Belgium.  E-mail: bohdan.bulanyi@uclouvain.be}}
\date{}
\begin{document}
	\font\myfont=cmr12 at 15pt
	\title{\myfont On the equivalence of static and dynamic weak optimal transport}
	\maketitle
	\titlelabel{\thetitle.\quad}
\fontsize{10.5}{12}\selectfont
	
	\begin{abstract}
	We show that there is a PDE formulation in terms of Fokker-Planck equations for weak optimal transport problems. The main novelty is that we introduce a minimization problem involving Fokker-Planck equations in the extended sense of measure-valued solutions and prove that it is equal to the associated weak transport problem.  	\end{abstract}
	
\begin{otherlanguage}{french}
\begin{abstract}
Nous montrons qu'il existe une formulation EDP en termes d'équations de Fokker-Planck pour des problèmes de transport optimal faible. La principale nouveauté est que nous introduisons un problème de minimisation dont la contrainte est constituée des équations de Fokker-Planck pour les mesures générales et prouvons qu'il est égal au problème de transport optimal faible correspondant.  
\end{abstract}
\end{otherlanguage}
	
\keywords{Fokker-Planck equation; weak transport problem; duality; Laplace operator; Green function; subharmonic function;  viscosity solution; Hamilton-Jacobi-Bellman equation.}
\mathsubj{49Q22; 28A33; 49J45; 49N15; 49Q20.}
	\section{Introduction}
	\subsection{Statement of the problem}
	Given probability measures $\mu$, $\nu$ on $\mathbb{R}^{d}$, we denote by $\Pi(\mu, \nu)$ the set of all probability measures (called transport plans) on $\mathbb{R}^{d}\times \mathbb{R}^{d}$ whose first and second marginals are $\mu$ and $\nu$, respectively. Disintegrating a transport plan $\gamma \in \Pi(\mu,\nu)$ with respect to its first marginal $\mu$, we obtain a $\mu$-almost everywhere uniquely determined family of probability measures $\{\gamma^{x}\}_{x \in \mathbb{R}^{d}}\subset \mathcal{P}(\mathbb{R}^{d})$ such that $\gamma=\gamma^{x}\otimes \mu$. Let $G: \mathbb{R}^{d}\times \mathcal{P}(\mathbb{R}^{d}) \to [0,+\infty]$ be lower semicontinuous in an appropriate sense and $G(x, \cdot)$ be convex on $\mathcal{P}(\mathbb{R}^{d})$ for each $x \in \mathbb{R}^{d}$. The weak transport problem is then defined by
	\begin{equation*}\label{OWTdefinition}
	H(\mu, \nu)=\inf\left\{\int_{\mathbb{R}^{d}} G(x, \gamma^{x}) \diff \mu(x) \st \gamma \in \Pi(\mu, \nu)\right\}.
	\end{equation*}
This problem was first introduced by Gozlan, Roberto, Samson and Tetali \cite{Gozlan_Roberto_Samson_Tetali}, and shortly thereafter by Alibert, Bouchitté and Champion \cite{Alibert-Bouchitte-Champion-2019}. The weak transport problem has been extensively studied in the literature: the results of existence and duality are established in \cite{Alibert-Bouchitte-Champion-2019, MR4029731, Gozlan_Roberto_Samson_Tetali}; the concept of $C$-monotonicity, which is analogous to cyclical monotonicity, was developed in \cite{Backhoff-Beiglbock-Huesmann-Kallblad-2020, MR4029731, Gozlan-Juillet-2020} in order to provide a characterization of optimizers; in \cite{10.1214/19-AIHP1014, Alibert-Bouchitte-Champion-2019, Backhoff-Beiglbock-Huesmann-Kallblad-2020, Beiglbock-Juillet-2021} the weak transport viewpoint is used to investigate martingale optimal transport problems; for applications of weak transport theory, the reader may consult \cite{10.3150/21-BEJ1346}. 

In \cite{Huesmann-Trevisan-2019}, Huesmann and Trevisan introduced the following minimization problem 
\[
f_{\textup{FPE}}(\mu,\nu)=\inf\left\{\int_{0}^{1}\int_{\mathbb{R}^{d}}f(a_{t}(x))\diff \varrho_{t}(x) \diff t \st \partial_{t} \varrho = \tr\left(\frac{1}{2} \nabla^{2} a \varrho \right),\; \varrho_{0}=\mu,\; \varrho_{1}=\nu\right\},
\]
where $\mu$, $\nu$ are probabilities on $\mathbb{R}^{d}$ with finite second moment, the Fokker-Planck equation $\partial_{t} \varrho=\tr(\frac{1}{2} \nabla^{2} a\varrho)$ in $(0,1)\times \mathbb{R}^{d}$ holds in the weak sense and $f$ is $q$-admissible for some $q>1$ (namely, $f(a)$ behaves like $f(a)\sim |a|^{q}$ for a nonnegative symmetric real $d\times d$ matrix $a$, which means that there exist constants $c, C>0$, independent of $a$, such that $c|a|^{q}\leq f(a)\leq C|a|^{q}$; see Sections~2,~3 in \cite{Huesmann-Trevisan-2019}). They proved that
\begin{equation}\label{BB=FPE}
f_{\textup{FPE}}(\mu, \nu)= \inf\left\{\int_{0}^{1} \mathbb{E}\left[f\left(\langle \dot{X} \rangle_{t} \right) \right]\diff t\right\}, 
\end{equation}
where the infimum is taken over all martingales connecting $\mu$ and $\nu$ whose quadratic variation process $\langle X \rangle$ is absolutely continuous with respect to the Lebesgue measure (see \cite[Theorem~3.3]{Huesmann-Trevisan-2019}). It is worth noting the following. Let $G(x,p)=f_{\textup{FPE}}(\delta_{x}, p)$ for each $x \in \mathbb{R}^{d}$ and for each probability measure $p$ on $\mathbb{R}^{d}$ with finite second moment. Then, in view of \cite[Theorem~4.3]{Huesmann-Trevisan-2019}, the functional $G$ is lower semicontinuous in an appropriate sense and $G(x, \cdot)$ is convex for each $x \in \mathbb{R}^{d}$. Taking into account \eqref{BB=FPE} and using a standard measurable selection argument (see, for instance, Section~7.7 in \cite{MR0511544}), we have
\[
f_{\textup{FPE}}(\mu, \nu)=\inf\left\{\int_{\mathbb{R}^{d}} f_{\textup{FPE}}(\delta_{x}, \gamma^{x}) \diff \mu(x) \st \gamma \in \Pi(\mu, \nu)\right\}=H(\mu,\nu).
\]
This provides an equivalent PDE formulation in terms of Fokker-Planck equations for the weak transport problem $H(\mu, \nu)$, where $G(x,p)=f_{\textup{FPE}}(\delta_{x}, p)$ and $f$ is $q$-admissible for some $q>1$ (the reader may also consult the proof of \cite[Theorem~8.2]{10.3150/21-BEJ1346}, where an equivalent SDE formulation for the weak transport problem is established using a measurable selection argument). We emphasize that the $q$-admissibility property of the function $f$ was crucial in the proof of \eqref{BB=FPE}  in \cite{Huesmann-Trevisan-2019}. Namely, the equality \eqref{BB=FPE} was proved in \cite{Huesmann-Trevisan-2019} only for strictly convex functions $f$ behaving like $f(a)\sim |a|^{q}$ for some $q>1$ and for each nonnegative symmetric real $d\times d$ matrix $a$. The purpose of this paper is to provide an equivalent PDE formulation for $H(\mu, \nu)$ in terms of Fokker-Planck equations when the cost function $f$ is not strictly convex and behaves like $f(a) \sim |a|$ (i.e., for example, for the trace or spectral radius of a nonnegative symmetric real $d\times d$ matrix), thereby narrowing the ``gap'' between the classical static formulation for $H(\mu, \nu)$ and the dynamic formulation involving a PDE. 

In particular, we introduce the minimization problem 
	 \begin{equation*}
	 F(\mu,\nu):=\inf\left\{\int_{0}^{1}\int_{\mathbb{R}^{d}}f\left(\frac{\diff \lambda_{t}}{\diff |\lambda_{t}|} \right) \diff |\lambda_{t}|\diff t \st \partial_{t} \varrho=\tr\left(\frac{1}{2} \nabla^{2} \lambda\right), \; \varrho_{0}=\mu,\; \varrho_{1}=\nu\right\}, 
	 \end{equation*}
where $f$ is a nonnegative convex positively 1-homogeneous function and the Fokker-Planck equation $\partial_{t} \varrho=\tr(\frac{1}{2} \nabla^{2} \lambda)$ in $(0,1)\times \mathbb{R}^{d}$ holds in the weak extended sense of measure-valued solutions (see Section~\ref{subsection 2.2}), namely, $\lambda$ does not have to be absolutely continuous with respect to $\varrho$, in contrast to the Fokker-Planck equations $\partial_{t} \varrho=\tr(\frac{1}{2}\nabla^{2} a\varrho)$ in $(0,1)\times \mathbb{R}^{d}$ considered by Huesmann and Trevisan in \cite{Huesmann-Trevisan-2019}, where the analysis is carried out for $q$-admissible costs. By defining $G(x,p)=F(\delta_{x}, p)$, our goal is to prove the equality $H(\mu,\nu)=F(\mu,\nu)$. Let us highlight that the \emph{dynamic} problem $F(\mu,\nu)$ is in fact essentially \emph{static} (see Proposition~\ref{prop: 1.7})  and, unlike the case of $q$-admissible costs \cite{Huesmann-Trevisan-2019}, we do not have an equivalent Benamou-Brenier type formulation \eqref{BB=FPE} for $F(\mu,\nu)$ (see Remark~\ref{rem about abscont}), and hence we cannot use a standard measurable selection argument to prove the equality between $F(\mu, \nu)$ and $H(\mu,\nu)$. Therefore, we implement a new analytical approach. 
\subsection{Main results}
In Proposition~\ref{prop: 1.7}, we obtain a formulation that states that $F(\mu,\nu)$ is in some sense a \emph{static} problem, namely
\[
F(\mu,\nu)=\inf\left\{\int_{\mathbb{R}^{d}}f \left(\frac{\diff \lambda}{\diff |\lambda|}\right) \diff |\lambda| \st \mathrm{tr}\left(\frac{1}{2}\nabla^{2}\lambda\right) = \nu -\mu \right\}.
\]
In Theorem~\ref{prop: 1.13}, we prove the existence of a solution to $F(\mu,\nu)$ whenever $F(\mu,\nu)$ is finite and derive a dual formulation for $F(\mu, \nu)$. This dual formulation is further refined in Theorem~\ref{prop: 1.36} in terms of invariant functions under $G$-transform $\varphi \mapsto \varphi^{G}$ (see \eqref{G-transform ABC}) in line with \cite{Alibert-Bouchitte-Champion-2019} and \cite{Gozlan_Roberto_Samson_Tetali}. In Proposition~\ref{proposition 1.17}, we prove the existence of a solution to $H(\mu,\nu)$ whenever $H(\mu,\nu)$ is finite and deduce a dual formulation for $H(\mu, \nu)$ relying on the $G$-transform, which is then refined in Theorems~\ref{cor: 1.20} and \ref{thmdualHA1-A4}, where the supremum is taken over potentials whose opposite is fixed by the $G$-transform. Since the dual competitors for $H(\mu,\nu)$ are less regular than the dual competitors for $F(\mu,\nu)$, and the supremum is taken for the same functional for both $F(\mu,\nu)$ and $H(\mu,\nu)$ (see Remark~\ref{comment about dformulations for F and H}), we first prove the equality $F(\mu, \nu)=H(\mu,\nu)$ when $G$-invariant functions can be approximated by smooth $G$-invariant functions in a suitable sense (see Theorems~\ref{thmsmooth3},~\ref{thmsmooth4}). Next, under the coercivity assumption on $f$, we characterize bounded continuous $G$-invariant functions as viscosity solutions of the Hamilton-Jacobi-Bellman equation (see Proposition~\ref{prop G-ivariant=VS}). This, under the coercivity and growth assumptions on $f$, yields the dual formulation for $H(\mu,\nu)$, where the competitors are viscosity solutions of the Hamilton-Jacobi-Bellman equation (see Corollary~\ref{corHthroughVS}). Then we perform a smoothing procedure and obtain the equality $F(\mu,\nu)=H(\mu,\nu)$ when the function $f$ behaves like $f(a)\sim |a|$ (see Theorem~\ref{th F=G}). It is worth noting that the established equality between $F(\mu,\nu)$ and $H(\mu,\nu)$ allows us to recover the result of Ghoussoub, Kim and Lin on the existence of a Brownian martingale (see Remark~\ref{rem existence of Brownian martingale}), as well as the Strassen theorem (see Remark~\ref{rem Strassen theorem}). 

 \subsection{Structure of the paper}
In Section~\ref{Section 2}, we introduce our main notation and recall some definitions. In Section~\ref{Section 3}, we announce our basic assumptions on the cost function $f$ and introduce the minimization problem \eqref{1.7} involving Fokker-Planck equations in the extended sense of measure-valued solutions. We prove Proposition~\ref{prop: 1.7} and Theorem~\ref{prop: 1.13}.  We deduce that the functional $F$ is  convex, subadditive and lower semicontinuous in an appropriate sense (see Proposition~\ref{prop: 1.16}). Next, we introduce the weak transport problem~\eqref{thedefofH} (static formulation) associated with the problem~\eqref{1.7} (PDE formulation) and prove Proposition~\ref{proposition 1.17}. We develop the theory of subadditive cost functionals that appeared in \cite[Section~6]{Alibert-Bouchitte-Champion-2019}, where the role of $\mathbb{R}^{d}$ is replaced by the closure of a bounded open convex subset of $\mathbb{R}^{d}$. Unlike \cite{Alibert-Bouchitte-Champion-2019}, this paper considers the set of probability measures on $\mathbb{R}^{d}$ that is not compact, which leads to additional difficulties. We obtain new results either under the coercivity assumption on $f$ (see Propositions~\ref{prop dual HI},~\ref{prop equiv IMS}) or under the growth assumption on $f$ (see Propositions~\ref{prop dual for H through Phi},~\ref{prop equiv PhiMS}). We prove Theorems~\ref{cor: 1.20},~\ref{thmdualHA1-A4}~and~\ref{prop: 1.36}. In Section~\ref{Section 4}, the equality between $F(\mu,\nu)$ and $H(\mu, \nu)$ is proved: under the approximation assumptions on $G$-invariant functions (see Theorems~\ref{thmsmooth3},~\ref{thmsmooth4}); under the coercivity and growth assumptions on $f$ (see Theorem~\ref{th F=G}). We consider in detail four examples \ref{thirdexample}-\ref{levsecondexample} for which we compute $F(\mu,\nu)$ in terms of $\mu$ and $\nu$.

\section{Preliminaries}\label{Section 2}
\subsection{Conventions and Notation}
\textit{Conventions:} in this paper, we say that a value is positive if it is strictly greater than zero, and a value is nonnegative if it is greater than or equal to zero. Euclidean spaces are endowed with the Euclidean inner product $a \cdot b =a^{\mathrm{T}}b$ and the induced norm $|a|=\sqrt{a^{\mathrm{T}}a}$. By $d$ we denote a positive integer. 
\vspace{1.7mm}

\noindent \textit{Notation}:  for $r>0$, $B_{r}(x)$, $\overline{B}_{r}(x)$, and $\partial{B}_{r}(x)$ denote, respectively,  the open ball, the closed ball, and the $(d-1)$-sphere with center $x$ and radius $r$. Let $S_{d}$ be the space of real $d\times d$ symmetric matrices endowed with the Hilbert–Schmidt (or Frobenius) inner product $A:B=\tr(A^{\mathrm{T}}B)$ and the induced norm $\|A\|=\sqrt{A:A}$. We write $I_{d}$ for the $d\times d$ identity matrix and $S^{+}_{d}$ ($S^{++}_{d}$) for the set of symmetric nonnegative (positive) definite real $d\times d$ matrices. We write  $\mathcal{P}_{2}((0,1)\times \mathbb{R}^{d})$ ($\mathcal{P}_{2}(\mathbb{R}^{d})$) for the set of probability measures $\mu$ on $(0,1)\times \mathbb{R}^{d}$ ($\mathbb{R}^{d}$) such that $\int_{(0,1)\times \mathbb{R}^{d}}|x|^{2} \diff \mu(t, x)$ ($\int_{\mathbb{R}^{d}}|x|^{2}\diff \mu(x)$) is finite. We endow $\mathcal{P}_{2}(\mathbb{R}^{d})$ with the topology generated by the $2$-Wasserstein distance (see Definition~6.8 and Theorem~6.9 in \cite{Villani-2009}). We write $\mathcal{M}((0,1)\times \mathbb{R}^{d}, S^{+}_{d})$ ($\mathcal{M}(\mathbb{R}^{d}, S^{+}_{d})$) for the set of measures $\lambda$ on $(0,1)\times \mathbb{R}^{d}$ ($\mathbb{R}^{d}$) with values in $S_{d}$ such that for each Borel set $E\subset (0,1) \times \mathbb{R}^{d}$ ($E\subset \mathbb{R}^{d}$), $ \lambda (E)  \in S^{+}_{d}$.  For a measure $\lambda$, we denote by $|\lambda|$ its total variation. By $C_{b}(\mathbb{R}^{d})$ we denote the space of all real-valued bounded continuous functions on $\mathbb{R}^{d}$. We write $\mathcal{S}_{b}(\mathbb{R}^{d})$ ($\mathcal{U}_{b}(\mathbb{R}^{d})$) for the set of all bounded lower (upper) semicontinuous functions on $\mathbb{R}^{d}$. If $U\subset \mathbb{R}^{d}$ is Lebesgue measurable, then $L^{1}(U)$ will denote the space consisting of all real measurable functions on $U$ that are integrable on $U$. By $L^{1}_{loc}(U)$ we denote the space of functions $u$ such that $u\in L^{1}(V)$ for all $V\Subset U$. We use the standard notation for Sobolev spaces. For an open set $U\subset \mathbb{R}^{d}$, denote by $W^{1,p}_{0}(U)$ the closure of $C^{\infty}_{c}(U)$ in the Sobolev space $W^{1,p}(U)$, where $C^{\infty}_{c}(U)$ is the space of functions in $C^{\infty}(U)$ with compact support in $U$. By $\mathcal{L}^{r}$ and $\mathcal{H}^{r}$ we denote the $r$-dimensional Lebesgue and Hausdorff measure, respectively. For each $\mu \in \mathcal{P}_{2}(\mathbb{R}^{d})$, $[\mu]$ and $\var(\mu)$ will denote the barycenter  and the variance of $\mu$, respectively. Namely, $[\mu]=\int_{\mathbb{R}^{d}}x\diff \mu(x)$ and $\var(\mu)=\int_{\mathbb{R}^{d}}|x|^{2}\diff \mu(x)-|[\mu]|^{2}$.

\subsection{Fokker-Planck equation for general measures}\label{subsection 2.2}

Let $C^{1,2}_{b}((0,1)\times \mathbb{R}^{d})$ be the space of functions continuously differentiable once in $t$ and twice in $x$ in $(0,1)\times \mathbb{R}^{d}$ with uniformly bounded $\partial_{t} \varphi$ and $\nabla^{2}_{x} \varphi$. 

Given a pair of measures $(\varrho, \lambda) \in \mathcal{P}_{2}((0,1)\times \mathbb{R}^{d})\times \mathcal{M}((0,1)\times \mathbb{R}^{d}, S^{+}_{d})$ such that one can disintegrate $\varrho=\varrho_{t} \otimes \mathcal{L}^{1}\mres (0,1)$, $\lambda =\lambda_{t} \otimes \mathcal{L}^{1} \mres (0,1)$, we say that the Fokker-Planck equation in the weak extended sense of measure-valued solutions 
\begin{equation} \label{1.1}
\partial_{t}\varrho= \tr \left(\frac{1}{2} \nabla^{2} \lambda\right) \,\  \text{in}\,\ (0,1) \times \mathbb{R}^{d} \tag{\textup{GFPE}}
\end{equation}
holds if 
\begin{equation*}\label{boundoncompsets}
\int_{0}^{1}|\lambda_{t}|(\mathbb{R}^{d})\diff t < +\infty 
\end{equation*}
and for each $\varphi \in C^{1,2}_{b}((0,1)\times \mathbb{R}^{d})$ with (closed) support in $(0,1) \times \mathbb{R}^{d}$,
\begin{equation}
\int^{1}_{0}\int_{\mathbb{R}^{d}}\partial_{t}\varphi(t,x)\diff\varrho_{t}(x) \diff t= -\int^{1}_{0}\int_{\mathbb{R}^{d}}\frac{1}{2}\nabla^{2}_{x}\varphi(t,x) : d\lambda_{t}(x)\diff t. \label{1.3}
\end{equation}
If $(\varrho, \lambda) \in \mathcal{P}_{2}((0,1)\times \mathbb{R}^{d}) \times \mathcal{M}((0,1)\times \mathbb{R}^{d}, S^{+}_{d})$ is a solution to \eqref{1.1}, then there exists a narrowly continuous curve $(\widetilde{\varrho}_t)_{t\in [0,1]}\subset \mathcal{P}_{2}(\mathbb{R}^{d})$ such that $\widetilde{\varrho}_{t}=\varrho_{t}$ for $\mathcal{L}^{1}$-a.e. $t \in (0,1)$. Moreover,  if $\psi \in C^{2}_{b}(\mathbb{R}^{d})$ and $0\leq t_{1} \leq t_{2}\leq 1$, then
\begin{equation} \label{1.4}
 \int_{\mathbb{R}^{d}}\psi(x) \diff \widetilde{\varrho}_{t_{2}}( x)-\int_{\mathbb{R}^{d}}\psi(x)\diff \widetilde{\varrho}_{t_{1}}(x) = \int_{t_{1}}^{t_{2}}\int_{\mathbb{R}^{d}}\frac{1}{2}\nabla^{2} \psi(x) : d \lambda_{t}(x) \diff t
\end{equation}
(the reader may consult \cite[Lemma~8.1.2]{Ambrosio-Gigli-Savare-2008} and \cite[Remark~2.3]{Trevisan-2016}). Thus, for a solution $(\varrho, \lambda)$ to \eqref{1.1}, without loss of generality, we shall assume that $(\varrho_{t})_{t \in (0,1)}$ is narrowly continuous. 

Let $C^{2}_{b}(\mathbb{R}^{d})$ be the space of functions twice continuously differentiable on $\mathbb{R}^{d}$ whose Hessian is uniformly bounded. 
\begin{prop} \label{prop: 1.4} Let $(\varrho, \lambda)$ solve \eqref{1.1}, $\mu=w*-\lim_{t\searrow0}\varrho_{t}$ and $\nu=w*-\lim_{t\nearrow 1}\varrho_{t}$. If $\psi \in C^{2}_{b}(\mathbb{R}^{d})$ is convex, the function $t \in [0,1] \mapsto \int_{\mathbb{R}^{d}}\psi \diff\varrho_{t}$ is nondecreasing. In particular, $\int_{\mathbb{R}^{d}}\psi \diff \mu \leq \int_{\mathbb{R}^{d}}\psi \diff \nu$.
\end{prop}
\begin{proof} For every $0\leq  t_{1} \leq t_{2} \leq 1$,  using (\ref{1.4}) and the fact that $\psi$ is convex and $\lambda_{t} \in \mathcal{M}(\mathbb{R}^{d}, S^{+}_{d})$, we have
\begin{align*}
\int_{\mathbb{R}^{d}}\psi(x) \diff \varrho_{t_{2}}(x) & = \int_{\mathbb{R}^{d}}\psi(x)\diff \varrho_{t_{1}}(x) + \int^{t_{2}}_{t_{1}}\int_{\mathbb{R}^{d}}\frac{1}{2}\nabla^{2}\psi(x):d \lambda_{t}(x) \diff t \geq \int_{\mathbb{R}^{d}}\psi(x)\diff \varrho_{t_{1}}(x),
\end{align*}
which completes our proof of Proposition~\ref{prop: 1.4}.
\end{proof}
For convenience, we recall the next definition (see \cite{Strassen-1965}).
\begin{defn}\label{defnconvorder} We say that measures $\mu, \nu \in \mathcal{P}_{2}(\mathbb{R}^{d})$ are in convex order, and we write $\mu \le_{c} \nu$, if for each convex function $\psi:\mathbb{R}^{d}\to \mathbb{R}$, it holds 
\[
\int_{\mathbb{R}^{d}}\psi(x) \diff \mu(x) \leq \int_{\mathbb{R}^{d}}\psi(x) \diff \nu(x).
\]
\end{defn}
\begin{rem}\label{remarkaboutconvexorder} Notice that $\mu \le_{c} \nu$ if and only if $\langle \nu-\mu, \varphi \rangle \geq 0$ for each convex function $\varphi \in C^{2}_{b}(\mathbb{R}^{d})$. Indeed, assume that $\langle \nu-\mu, \varphi \rangle \geq 0$ for each convex function $\varphi \in C^{2}_{b}(\mathbb{R}^{d})$. Then $\int_{\mathbb{R}^{d}}\psi \diff \mu = \int_{\mathbb{R}^{d}}\psi \diff \nu$ for each affine map $\psi:\mathbb{R}^{d} \to \mathbb{R}$. Since every convex function is nonnegative up to the addition of some affine map, the relation $\mu \leq_{c} \nu$ will be justified as soon as we show that $\langle \nu-\mu, \varphi \rangle \geq 0$ for each convex function $\varphi: \mathbb{R}^{d} \to [0,+\infty)$. For such a function $\varphi$, there exists a sequence $(\varphi_{k})_{k\in \mathbb{N}}$ of convex functions $\varphi_{k}: \mathbb{R}^{d} \to [0,+\infty)$ such that $\varphi_{k}(\cdot)=\inf\{\varphi(y)+k|\cdot-y| \st y \in \mathbb{R}^{d}\}$ is $k$-Lipschitz and $\varphi_{k} \nearrow \varphi$ as $k \to +\infty$. For each $k \in \mathbb{N}$, using convolution with $\eta_{\varepsilon}(\cdot)=\varepsilon^{-d}\eta(\cdot/\varepsilon)$, where $\varepsilon>0$ and $\eta$ is a standard mollifier (as in Lemma~\ref{lem 1.36}), $\varphi_{k}$ can be uniformly approximated on $\mathbb{R}^{d}$ by a sequence of nonnegative convex smooth $k$-Lipschitz functions. Again, using the above mollification procedure,  each convex smooth Lipschitz function on $\mathbb{R}^{d}$ can be uniformly approximated by a sequence of convex functions lying in $C^{2}_{b}(\mathbb{R}^{d})$. After all, taking into account the above monotone and uniform convergences, for each convex function $\varphi:\mathbb{R}^{d}\to [0,+\infty)$ we can find a sequence $(\varphi_{k})_{k\in \mathbb{N}} \subset C^{2}_{b}(\mathbb{R}^{d})$ of nonnegative convex functions such that $\int_{\mathbb{R}^{d}} \varphi_{k} \diff \mu \to \int_{\mathbb{R}^{d}} \varphi \diff \mu$ and $\int_{\mathbb{R}^{d}} \varphi_{k} \diff \nu \to \int_{\mathbb{R}^{d}} \varphi \diff \nu$ as $k \to +\infty$. This actually justifies the above criterion.
\end{rem}
\subsection{Subharmonic functions}\label{shfssct}
\begin{defn}\label{defofsubharmonic}
A function $u: \mathbb{R}^{d} \to \mathbb{R}\cup\{-\infty\}$ is said to be subharmonic if it satisfies the following conditions.
\begin{enumerate}[label=(\roman*)]
\item $u$ is not identically equal to $-\infty$.
\item  $u$ is upper semicontinuous.
\item \label{assert3subharm} For each $x\in \mathbb{R}^{d}$ and $r>0$, 
\begin{equation*}
u(x)\leq \fint_{\partial B_{r}(x)}u(y)\diff \mathcal{H}^{d-1}(y),
\end{equation*}
where $\fint_{\partial B_{r}(x)}u(y)\diff \mathcal{H}^{d-1}(y)=\frac{1}{\mathcal{H}^{d-1}(\partial B_{r}(x))}\int_{\partial B_{r}(x)}u(y)\diff \mathcal{H}^{d-1}(y)$.
\end{enumerate}
\end{defn}
A function $v: \mathbb{R}^{d}\to \mathbb{R}\cup \{+\infty\}$ is said to be superharmonic if $u=-v$ is subharmonic. A function is harmonic if and only if it is both subharmonic and superharmonic. 
If $u \in C^{2}(\mathbb{R}^{d})$, then $u$ is subharmonic if and only if $\Delta u\geq 0$ in $\mathbb{R}^{d}$ (see, for instance, \cite[Section~3.2]{Rado-1937}).
In one dimension, a function is subharmonic if and only if it is convex; however,  in dimension $d\geq 2$ the notions of subharmonicity and convexity are not equivalent (for more details, the reader may consult, for instance, \cite{Rado-1937}).  We shall denote by $\mathcal{SH}(\mathbb{R}^{d})$ the cone of all subharmonic functions on $\mathbb{R}^{d}$. 

Next, we recall the notion of the so-called \textit{subharmonic order}, which is stronger than the convex order in dimension~$d\geq2$ and is equivalent to the convex order in one dimension. It is worth mentioning that the convex order between a pair of probability measures $\mu$ and $\nu$ is a necessary and sufficient condition for the existence of a martingale coupling between $\mu$ and $\nu$, as was proved by Strassen in \cite{Strassen-1965}. The subharmonic order between $\mu$ and $\nu$ is a necessary and sufficient condition for the existence of a Brownian martingale coupling between $\mu$ and $\nu$ (namely, a transport plan $\gamma \in \Pi(\mu,\nu)$ which is a joint distribution of $(B_{0}, B_{\tau})$, where $(B_{t})_{t}$ is the Brownian motion with initial law $\mu$, the law of $B_{\tau}$ is $\nu$ and $\tau$ is a possibly randomized stopping time for the Brownian filtration), as was proved by Ghoussoub, Kim, and Lin in \cite{Ghoussoub-Kim-Lin}.
	\begin{defn}\label{subharmonic order measures}
		We say that measures $\mu, \nu \in \mathcal{P}_{2}(\mathbb{R}^{d})$ are in subharmonic order, and we write $\mu\leq_{sh} \nu$, if for each $u \in C^{2}_{b}(\mathbb{R}^{d}) \cap \mathcal{SH}(\mathbb{R}^{d})$ the following holds 
		\[
		\int_{\mathbb{R}^{d}}u\diff \mu \le \int_{\mathbb{R}^{d}}u \diff \nu.
		\]
	\end{defn}
If $\mu \leq_{sh} \nu$, using a standard mollification procedure, we observe that $\int_{\mathbb{R}^{d}} u \diff \mu \leq \int_{\mathbb{R}^{d}} u \diff \nu$ for each bounded or Lipschitz $u \in \mathcal{SH}(\mathbb{R}^{d})$.
\subsection{Fenchel conjugate}
Let $X$ be a normed vector space and $X^{\prime}$ be the topological dual space of $X$. 
\begin{defn} Let $f:X\to \mathbb{R}\cup \{+\infty\}$ and $\dom(f)=\{x\in X \st f(x)<+\infty\}\not = \emptyset$. The Fenchel conjugate $f^{*}:X^{\prime}\to \mathbb{R}\cup \{+\infty\}$ of $f$ at $x^{\prime} \in X^{\prime}$ is defined by
\[
f^{*}(x^{\prime})=\sup\{x^{\prime}(x)-f(x) \st x \in X\}.
\]
\end{defn}
If $X=S_{d}$, then the following result holds. Recall that a function $f:S_{d} \to [0, +\infty]$ is said to be positively 1-homogeneous if 
\[ f(tA)=tf(A) \,\  \text{for all}\,\ A, B \in S_{d}\,\ \text{and}\,\ t\geq 0.\]
We denote by $C_{b}(\mathbb{R}^{d}, S_{d})$ the space of all bounded continuous functions on $\mathbb{R}^{d}$ with values into $S_{d}$. 
\begin{lemma}\label{lemma conjugate prop}
Let $f: S_{d} \to [0, +\infty]$ be convex, lower semicontinuous and positively 1-homogeneous with $\dom(f) \subset S^{+}_{d}$ and $\dom(f)\not = \emptyset$. Let $\Psi: (C_{b}(\mathbb{R}^{d}, S_{d}))^{\prime} \to [0,+\infty]$ be defined by
\[
\Psi(\sigma)=\begin{cases}
\displaystyle \int_{\mathbb{R}^{d}}f\left(\frac{\diff \sigma}{\diff |\sigma|}\right) \diff |\sigma| \,\ & \text{if} \,\  \sigma \in \mathcal{M}(\mathbb{R}^{d}, S^{+}_{d}),\,\ |\sigma|(\mathbb{R}^{d})<+\infty,\\
+\infty \,\ & \text{otherwise}.
\end{cases}
\]
Then $\Psi$ is convex and positively 1-homogeneous. For each $\xi \in C_{b}(\mathbb{R}^{d}, S_{d})$, 
\begin{equation} \label{eq computofpsistarconj1}
\begin{split}
\Psi^{*}(\xi)=\begin{cases}
0 \,\ & \text{if} \,\ \xi(\mathbb{R}^{d}) \subset \dom(f^{*}), \\
+\infty \,\ & \text{otherwise}.
\end{cases}
\end{split}
\end{equation}
Furthermore, for each $\sigma \in \mathcal{M}(\mathbb{R}^{d}, S^{+}_{d})$ such that $|\sigma|(\mathbb{R}^{d})<+\infty$, $\Psi$ is (weakly) lower semicontinuous at $\sigma$ and   $\Psi^{**}(\sigma)=\Psi(\sigma)$.
\end{lemma}
\begin{proof}
Given the definition of $\Psi$, to prove the convexity of $\Psi$, it suffices to show that if $t\in (0,1)$, $\sigma_{1}, \sigma_{2}  \in \mathcal{M}(\mathbb{R}^{d}, S^{+}_{d})$ and $|\sigma_{i}|(\mathbb{R}^{d})<+\infty$ for each $i \in \{1,2\}$, then $\Psi(t \sigma_{1}+(1-t)\sigma_{2})\leq t\Psi(\sigma_{1})+(1-t)\Psi(\sigma_{2})$. This estimate comes by using the facts that $f$ is positively 1-homogeneous and convex. Indeed, we have
\begin{equation*}
\begin{split}
\Psi(t \sigma_{1}+(1-t)\sigma_{2})&=\int_{\mathbb{R}^{d}}f\left(\frac{\diff (t\sigma_{1}+(1-t)\sigma_{2})}{\diff |t\sigma_{1}+(1-t)\sigma_{2}|}\right) \diff |t\sigma_{1}+(1-t)\sigma_{2}| \\ & = \int_{\mathbb{R}^{d}}f\left(\frac{\diff (t\sigma_{1}+(1-t)\sigma_{2})}{\diff (t|\sigma_{1}|+(1-t)|\sigma_{2}|)}\right) \diff (t|\sigma_{1}|+(1-t)|\sigma_{2}|)\\
& \leq  t\int_{\mathbb{R}^{d}}f\left(\frac{\diff \sigma_{1}}{\diff (t|\sigma_{1}|+(1-t)|\sigma_{2}|)}\right) \diff (t|\sigma_{1}|+(1-t)|\sigma_{2}|) \\ & \,\ \,\ \,\  +(1-t)\int_{\mathbb{R}^{d}}f\left(\frac{\diff \sigma_{2}}{\diff (t|\sigma_{1}|+(1-t)|\sigma_{2}|)}\right) \diff (t|\sigma_{1}|+(1-t)|\sigma_{2}|)\\
& = t\int_{\mathbb{R}^{d}}f\left(\frac{\diff \sigma_{1}}{\diff |\sigma_{1}|}\right) \diff |\sigma_{1}|+(1-t)\int_{\mathbb{R}^{d}}f\left(\frac{\diff \sigma_{2}}{\diff |\sigma_{2}|}\right) \diff |\sigma_{2}| = t\Psi(\sigma_{1})+(1-t)\Psi(\sigma_{2})
\end{split}
\end{equation*}
(see Remark~\ref{rem: 1.8}). The positive 1-homogeneity of $\Psi$ comes from its definition and the positive 1-homogeneity of $f$.

Next, fix an arbitrary $\xi \in C_{b}(\mathbb{R}^{d}, S_{d})$. If $\xi(x_{0}) \not \in \dom(f^{*})$, then there exists $M \in \dom(f)$ such that  $\xi(x_{0}):M>f(M)$ (see \eqref{mf f usual}). Since $\xi$ is continuous, there exists $r>0$ such that $\xi(x):M>f(M)$ for each $x \in B_{r}(x_{0})$. Defining $\sigma_{n}=n M  \mathcal{L}^{d}\mres B_{r}(x_{0})$ and using the positive 1-homogeneity of $f$, we have 
\[
\Psi^{*}(\xi)\geq \int_{\mathbb{R}^{d}}\xi(x) :\diff \sigma_{n}(x) - \int_{\mathbb{R}^{d}}f\left(\frac{\diff \sigma_{n}}{\diff |\sigma_{n}|}\right) \diff |\sigma_{n}| = n \int_{B_{r}(x_{0})} (\xi(x):M-f(M)) \diff x >0.
\]
Letting $n$ tend to $+\infty$, we deduce that $\Psi^{*}(\xi)=+\infty$. On the other hand, if $\xi(\mathbb{R}^{d})\subset \dom(f^{*})$, then for each $\sigma \in \mathcal{M}(\mathbb{R}^{d}, S_{d}^{+})$ such that $|\sigma|(\mathbb{R}^{d})<+\infty$, we have 
\[
\int_{\mathbb{R}^{d}}\xi(x):\diff \sigma(x)-\int_{\mathbb{R}^{d}}f\left(\frac{\diff \sigma}{\diff |\sigma|}\right) \diff |\sigma| \leq 0
\]
(see \eqref{mf f usual}) and hence $\Psi^{*}(\xi)=0$. 
Thus, we have proved \eqref{eq computofpsistarconj1}. 

Inasmuch as $\Psi$ is convex and for each $\sigma \in \mathcal{M}(\mathbb{R}^{d}, S^{+}_{d})$ such that $|\sigma|(\mathbb{R}^{d})<+\infty$, $\Psi$ is (weakly) lower semicontinuous at $\sigma$ (which is a consequence of \cite[Theorem~2.34]{APD} and Remark~\ref{rem: 1.8}), we have $\Psi^{**}(\sigma)=\Psi(\sigma)$ (see \cite[Theorem~2.1 $(i)$]{BOUCHITTE2006642}). This completes our proof of Lemma~\ref{lemma conjugate prop}.
\end{proof}
\section{A PDE constrained optimization problem}\label{Section 3}
For each $\mu, \nu \in \mathcal{P}_{2}(\mathbb{R}^{d})$, we consider the following minimization problem 
\begin{equation} \label{1.7}
\begin{split}
 F(\mu,\nu)= \inf \left\{ \int^{1}_{0}\int_{\mathbb{R}^{d}}f\left(\frac{\diff \lambda_{t}}{\diff |\lambda_{t}|}\right) \,d |\lambda_{t}| \diff t \st \partial_{t} \varrho=\tr\left(\frac{1}{2} \nabla^{2} \lambda\right), \; \varrho_{0}=\mu,\; \varrho_{1}=\nu \right\}, 
 \end{split}
\end{equation}
where the function $f: S_{d} \to [0,+\infty]$ with $\dom(f):=\{M \in S_{d} \st f(M)<+\infty\} \subset S^{+}_{d}$ satisfies the following assumptions.
\begin{align}
& \,\  \,\ \text{$\dom(f) \cap S^{++}_{d}$ is dense in $\dom(f)\not = \emptyset$}. \tag{$A0$} \label{assumpdom}\\
& \,\  \,\ \text{$f$ is convex and positively 1-homogeneous}. \tag{$A1$} \label{assumpsublinearf}\\
& \,\  \,\ \text{$f$ is lower semicontinuous}. \tag{$A2$}   \label{assumplscf} 
\end{align}
It is worth noting that $f$ is convex and positively 1-homogeneous if and only if $f$ is subadditive and positively 1-homogeneous. Among the interesting examples of such functions, we emphasize the following.
\begin{enumerate}[label=(\roman*)]
\item \label{thirdexample}  $f(A)=t$ if $A=t I_{d}$ for some $t \geq 0$ and $f(A)=+\infty$ otherwise.
\item \label{notcoerc}For some $B \in S^{+}_{d}$,  $f(A)=A: B$ if $A \in S^{+}_{d}$ and $f(A)=+\infty$ otherwise. 
\item \label{tracefirstexample} $f(A)= \tr(A)$ if $A \in S^{+}_{d}$ and $f(A)=+\infty$ otherwise.
\item \label{levsecondexample} $f(A)$ is the largest eigenvalue of $A$ if $A\in S^{+}_{d}$ and $f(A)=+\infty$ otherwise.
\end{enumerate}
Notice that in the example~\ref{thirdexample} $\dom(f)$ is a proper convex subset of $S^{+}_{d}$, in the example~\ref{notcoerc} the function $f$ is not coercive, and the example~\ref{tracefirstexample} is the particular case of the example~\ref{notcoerc} with $B=I_{d}$.

Using the 1-homogeneity of $f$, we compute its Fenchel conjugate. For each $A\in S_{d}$,
\begin{equation}\label{mf f usual}
\begin{split}
f^{*}(A)&=\sup\left\{A:M-f(M) \st  M\in S_{d}\right\}
=\begin{cases}  0 \,\ &\text{if}\,\ \,\ A:M\leq f(M) \,\ \forall M\in \dom(f),\\
+\infty \,\ &\text{otherwise}.
\end{cases}
\end{split}
\end{equation}
In particular, $A \in \dom(f^{*})$ for each $A \in -S^{+}_{d}$.
\begin{rem} \label{rem: 1.6} If $F(\mu,\nu)<+\infty$, then $\mu \le_{c} \nu$ (see Proposition~\ref{prop: 1.4} and Remark~\ref{remarkaboutconvexorder}), which implies that $[\mu]=[\nu]$.
\end{rem}
Using the 1-homogeneity of $f$, we eliminate the time variable in (\ref{1.7}) via the equation
\begin{equation}
\tr\left(\frac{1}{2}\nabla^{2}\lambda\right)=\nu-\mu  \,\ \,\ \text{in}\,\ \mathbb{R}^{d},\label{1.8}
\end{equation}
which means that $\lambda \in \mathcal{M}(\mathbb{R}^{d}, S^{+}_{d})$, $|\lambda|(\mathbb{R}^{d})<+\infty$ and
\begin{equation}
\int_{\mathbb{R}^{d}}\frac{1}{2}\nabla^{2}\varphi(x):d \lambda(x) = \int_{\mathbb{R}^{d}}\varphi(x) \diff\nu(x)- \int_{\mathbb{R}^{d}}\varphi(x)\diff \mu(x) \,\ \,\ \forall \varphi \in C^{2}_{b}(\mathbb{R}^{d}). \label{1.9}
\end{equation}

We denote by $C_{b}((0,1)\times \mathbb{R}^{d}, S_{d})$ the space of all bounded continuous functions on $(0,1)\times \mathbb{R}^{d}$ with values into $S_{d}$. 
\begin{prop}\label{prop: 1.7} For each $\mu, \, \nu \in \mathcal{P}_{2}(\mathbb{R}^{d})$ the following holds
\begin{equation}
F(\mu,\nu)=\inf\left\{\int_{\mathbb{R}^{d}}f\left(\frac{\diff \lambda}{\diff |\lambda|}\right)\diff |\lambda| \st \tr\left(\frac{1}{2}\nabla^{2}\lambda\right)=\nu-\mu \right\}. \label{1.10}
\end{equation}
\end{prop}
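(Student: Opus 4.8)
The plan is to prove the two inequalities between the two infima separately. The key structural fact is that $f$ is positively $1$-homogeneous, which lets us rescale in time without changing the cost. For the inequality $\eqref{1.10} \le \eqref{1.7}$, I would take an arbitrary competitor $(\varrho,\lambda)$ for $F(\mu,\nu)$ as defined in $\eqref{1.7}$, i.e. $(\varrho,\lambda)$ solves $\eqref{1.1}$, satisfies $\eqref{asscomsup}$, with $\varrho_0=\mu$, $\varrho_1=\nu$. Define the time-averaged matrix-valued measure $\lambda := \int_0^1 \lambda_t \, \diff t$ on $X$ (more precisely, $\langle \lambda, \Phi\rangle := \int_0^1 \langle \lambda_t, \Phi\rangle \diff t$ for continuous matrix-valued $\Phi$); this is a finite measure with values in $S_d^+$ and support in $X$, by $\eqref{asscomsup}$ and the finiteness of the total mass (which follows from coercivity of $f$ and finiteness of the cost, or can be assumed since otherwise there is nothing to prove). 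Integrating $\eqref{1.4}$ (equivalently $\eqref{1.3}$ with a test function of the form $\varphi(t,x)=\eta(t)\psi(x)$ and then a limiting argument, or just $\eqref{1.3}$ directly with $\varphi(t,x)=\psi(x)$ after a standard cutoff in $t$) against $\psi\in C^2_c(\mathbb{R}^d)$ gives
\[
\int_{\mathbb{R}^d}\psi\diff\nu-\int_{\mathbb{R}^d}\psi\diff\mu=\int_0^1\frac{\diff}{\diff t}\int_{\mathbb{R}^d}\psi\diff\varrho_t\,\diff t=\int_0^1\int_{\mathbb{R}^d}\tfrac12\nabla^2\psi:\diff\lambda_t\,\diff t=\int_{\mathbb{R}^d}\tfrac12\nabla^2\psi:\diff\lambda,
\]
so $\lambda$ satisfies $\eqref{1.8}$ (in the sense of $\eqref{1.9}$). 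For the cost, Jensen's inequality for the convex $1$-homogeneous functional $\Phi\mapsto\int f(\diff\Phi/\diff|\Phi|)\diff|\Phi|$ (i.e. the fact that a sublinear function of a measure is convex and subadditive under summation of measures, cf. the theory of functions of a measure) yields
\[
\int_X f\!\left(\frac{\diff\lambda}{\diff|\lambda|}\right)\diff|\lambda|\le\int_0^1\int_X f\!\left(\frac{\diff\lambda_t}{\diff|\lambda_t|}\right)\diff|\lambda_t|\,\diff t,
\]
and taking the infimum over competitors gives $\eqref{1.10}\le\eqref{1.7}$.

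For the reverse inequality $\eqref{1.7}\le\eqref{1.10}$, I would take an arbitrary $\lambda\in\mathcal M(\mathbb{R}^d,S_d^+)$ with $\supp(\lambda)\subset X$ satisfying $\eqref{1.8}$, and build a time-dependent solution by setting $\lambda_t:=\lambda$ for all $t\in(0,1)$ (the ``constant-speed'' lift), so that $\lambda=\lambda_t\otimes\mathcal L^1\mres(0,1)$ in the obvious way. I then need a curve $\varrho=\varrho_t\otimes\mathcal L^1\mres(0,1)$ with $\varrho_0=\mu$, $\varrho_1=\nu$, $\supp(\varrho_t)\subset X$, solving $\eqref{1.4}$ with this $\lambda_t$. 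The natural candidate is the affine interpolation at the level of the second moments: solve $\frac{\diff}{\diff t}\int\psi\diff\varrho_t=\int\tfrac12\nabla^2\psi:\diff\lambda$ for $\psi\in C^2_c$, whose right-hand side is a constant $c_\psi$ independent of $t$; this forces $t\mapsto\int\psi\diff\varrho_t$ to be affine, equal to $(1-t)\int\psi\diff\mu+t\int\psi\diff\nu$. The candidate is therefore $\varrho_t:=(1-t)\mu+t\nu$, which is a positive measure of constant mass supported in $X$ (using convexity of $X$ it is supported in $X$; actually here $\supp\varrho_t\subset X$ is immediate since both $\mu,\nu$ are supported in $X$), and one checks directly from $\eqref{1.9}$ that it satisfies $\eqref{1.3}$, hence solves $\eqref{1.1}$ with the given $\lambda_t$. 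The cost is
\[
\int_0^1\int_X f\!\left(\frac{\diff\lambda_t}{\diff|\lambda_t|}\right)\diff|\lambda_t|\,\diff t=\int_0^1\left(\int_X f\!\left(\frac{\diff\lambda}{\diff|\lambda|}\right)\diff|\lambda|\right)\diff t=\int_X f\!\left(\frac{\diff\lambda}{\diff|\lambda|}\right)\diff|\lambda|,
\]
so $F(\mu,\nu)\le\eqref{1.10}$ after taking the infimum. Combining the two inequalities proves the proposition.

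The main obstacle, and the step requiring the most care, is the first inequality: justifying the Jensen-type estimate $\int_X f(\diff\lambda/\diff|\lambda|)\diff|\lambda|\le\int_0^1\int_X f(\diff\lambda_t/\diff|\lambda_t|)\diff|\lambda_t|\,\diff t$ rigorously. This is where sublinearity (subadditivity + $1$-homogeneity) of $f$ is essential and where the assumption that $f$ is lower semicontinuous (A2) — hence that the functional $\lambda\mapsto\int f(\diff\lambda/\diff|\lambda|)\diff|\lambda|$ is a well-defined, convex, $\sigma$-additive, lower semicontinuous functional on measures — enters. The cleanest route is via the duality formula $\int_X f(\diff\lambda/\diff|\lambda|)\diff|\lambda|=\sup\{\int_X \Phi:\diff\lambda : \Phi\in C(X,M_d),\ \Phi(x):M\le f(M)\ \forall M\in\dom f\ \forall x\}$ (which follows from $\eqref{mf f usual}$ and a standard measurable-selection/approximation argument), after which the inequality is immediate by linearity of $\lambda\mapsto\int\Phi:\diff\lambda$ and Fubini: for any admissible $\Phi$, $\int_X\Phi:\diff\lambda=\int_0^1\int_X\Phi:\diff\lambda_t\,\diff t\le\int_0^1\int_X f(\diff\lambda_t/\diff|\lambda_t|)\diff|\lambda_t|\,\diff t$, and one takes the supremum over $\Phi$ on the left. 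A secondary, minor point is the manipulation turning $\eqref{1.3}$ (test functions in $C^2_c((0,1)\times\mathbb{R}^d)$) into the identity against $\psi\in C^2_c(\mathbb{R}^d)$ alone; this is handled by $\eqref{1.4}$ together with the absolute continuity of $t\mapsto\int\psi\diff\varrho_t$ on $[0,1]$ and the existence of the narrow limits $\mu,\nu$ guaranteed by Proposition~\ref{prop: 1.3}, so that the boundary terms at $t=0,1$ are exactly $\int\psi\diff\mu$ and $\int\psi\diff\nu$.
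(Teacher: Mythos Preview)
Your proposal is correct and follows essentially the same approach as the paper: the constant-in-time lift $\lambda_t\equiv\lambda$ with the affine interpolation $\varrho_t=(1-t)\mu+t\nu$ for one inequality, and the time-average $\lambda=\pi^x_{\#}(\lambda_t\otimes\mathcal{L}^1\mres(0,1))$ (your $\int_0^1\lambda_t\,\diff t$) for the other, with the cost comparison handled exactly via the duality representation $\int_X f(\diff\lambda/\diff|\lambda|)\diff|\lambda|=\sup\{\int_X\xi:\diff\lambda\mid \xi\in C_c,\ \xi(X)\subset\dom(f^*)\}$ that you identify (the paper cites this as \cite[Theorem~5]{Convfunct}). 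The only cosmetic difference is the order in which the two inequalities are treated.
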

\begin{rem} \label{rem: 1.8} Since $f$ is positively 1-homogeneous, for each positive finite measure $m$ on $\mathbb{R}^{d}$ such that $|\lambda| \ll m$, it holds
\[
\int_{\mathbb{R}^{d}}f\left(\frac{\diff \lambda}{\diff m}\right) \diff m = \int_{\mathbb{R}^{d}}f\left(\frac{\diff \lambda}{\diff |\lambda|}\right)\diff |\lambda|.
\]
\end{rem}
\begin{proof}[Proof of Proposition \ref{prop: 1.7}.] Let $\lambda \in \mathcal{M}(\mathbb{R}^{d}, S^{+}_{d})$ be a solution to (\ref{1.8}). For each $t\in [0,1]$, define
\begin{equation}
\varrho_{t}=(1-t)\mu+t\nu \in \mathcal{P}_{2}(\mathbb{R}^{d}). \label{1.11}
\end{equation}
Fix an arbitrary $\varphi \in C^{1,2}_{b}((0,1)\times \mathbb{R}^{d})$ with (closed) support in $(0,1) \times \mathbb{R}^{d}$. Then, using \eqref{1.11}, Fubini's theorem and integrating by parts, we have
\begin{equation}
\int^{1}_{0}\int_{\mathbb{R}^{d}}\partial_{t}\varphi(t,x) \diff \varrho_{t}(x) \diff t=\int_{\mathbb{R}^{d}}\int^{1}_{0}\varphi(t,x)\diff t \diff \mu(x)- \int_{\mathbb{R}^{d}}\int^{1}_{0}\varphi(t,x)\diff t\diff \nu(x). \label{1.12}
\end{equation}
Since $\tr(\frac{1}{2}\nabla^{2} \lambda)=\nu-\mu$ and $\int^{1}_{0}\varphi(t,\cdot)\diff t \in C^{2}_{b}(\mathbb{R}^{d})$, using (\ref{1.9}) and Fubini's theorem, we deduce that 
\begin{equation}
\int^{1}_{0}\int_{\mathbb{R}^{d}}\frac{1}{2}\nabla^{2}_{x}\varphi(t,x):\diff \lambda(x)\diff t= \int_{\mathbb{R}^{d}}\int^{1}_{0}\varphi(t,x)\diff t\diff \nu( x)-\int_{\mathbb{R}^{d}}\int^{1}_{0}\varphi(t,x)\diff t \diff \mu(x). \label{1.13}
\end{equation}
Combining (\ref{1.12}) and (\ref{1.13}),  we get
\[
\int^{1}_{0}\int_{\mathbb{R}^{d}}\partial_{t}\varphi(t,x)\diff \varrho_{t}(x)\diff t= -\int^{1}_{0}\int_{\mathbb{R}^{d}}\frac{1}{2}\nabla^{2}_{x}\varphi(t,x):\diff \lambda(x)\diff t,
\]
and hence the pair $(\varrho_{t}\otimes \mathcal{L}^{1}\mres (0,1),\lambda \otimes \mathcal{L}^{1} \mres (0,1))$ is a solution to (\ref{1.1}). Thus,
\begin{equation}
F(\mu,\nu)\leq \inf\left\{\int_{\mathbb{R}^{d}}f\left(\frac{\diff \lambda}{\diff |\lambda|}\right) \diff |\lambda| \st \tr\left(\frac{1}{2}\nabla^{2}\lambda\right)=\nu-\mu \right\}. \label{1.14}
\end{equation}

Now assume that $F(\mu,\nu)<+\infty$ and let $(\varrho_{t}\otimes \mathcal{L}^{1}\mres (0,1),\lambda_{t}\otimes \mathcal{L}^{1}\mres (0,1))$ be an admissible pair for (\ref{1.7}). Using (\ref{1.4}) with $t_{1}=0$ and $t_{2}=1$, for each $\varphi \in C^{2}_{b}(\mathbb{R}^{d})$, we obtain
\begin{equation} \label{form}
\int^{1}_{0}\int_{\mathbb{R}^{d}}\frac{1}{2}\nabla^{2}\varphi(x):\diff \lambda_{t}(x)\diff t =\int_{\mathbb{R}^{d}}\varphi(x)\diff \nu(x) - \int_{\mathbb{R}^{d}}\varphi(x)\diff \mu(x).
\end{equation}
Define $$\lambda=\pi^{x}_{\#}(\lambda_{t} \otimes \mathcal{L}^{1}\mres (0,1)) \in \mathcal{M}(\mathbb{R}^{d}, S^{+}_{d}),$$ where $\pi^{x}(t,x)=x$ for each $(t,x)\in [0,1]\times \mathbb{R}^{d}$. Then $|\lambda|(\mathbb{R}^{d})<+\infty$ and for each $\xi\in C_{b}(\mathbb{R}^{d}, S_{d})$,
\begin{equation}\label{eq 1.15}
\int_{\mathbb{R}^{d}}\xi(x):\diff \lambda(x)=\int^{1}_{0}\int_{\mathbb{R}^{d}}\xi(\pi^{x}(t,x)):\diff \lambda_{t}(x)\diff t.
\end{equation}
Gathering together \eqref{form} and \eqref{eq 1.15}, we get
\begin{equation*}
\int_{\mathbb{R}^{d}}\frac{1}{2}\nabla^{2}\varphi(x):\diff \lambda(x)=\int_{\mathbb{R}^{d}}\varphi(x)\diff \nu(x) - \int_{\mathbb{R}^{d}}\varphi(x)\diff \mu(x) \,\ \,\ \forall \varphi \in C^{2}_{b}(\mathbb{R}^{d}).
\end{equation*}
Thus, $\lambda$ is a solution to \eqref{1.8}. Let $\Psi$ be the convex function of Lemma~\ref{lemma conjugate prop}. According to Lemma~\ref{lemma conjugate prop}, \begin{align*}
\int_{\mathbb{R}^{d}}&f\left(\frac{\diff \lambda}{\diff| \lambda|}\right) \diff |\lambda|= \sup\left\{\int_{\mathbb{R}^{d}}\xi(x):\diff \lambda(x) \st \xi \in C_{b}(\mathbb{R}^{d}, S_{d}),\; \xi(\mathbb{R}^{d}) \subset \dom(f^{*})  \right\}\\&
=\sup\left\{\int^{1}_{0}\int_{\mathbb{R}^{d}}\xi(\pi^{x}(t,x)):\diff \lambda_{t}(x)\diff t \st \xi \in C_{b}(\mathbb{R}^{d}, S_{d}), \; \xi(\mathbb{R}^{d}) \subset \dom(f^{*}) \right\}\\
&\leq\sup\left\{\int^{1}_{0}\int_{\mathbb{R}^{d}} \xi(t,x):\diff \lambda_{t}(x)\diff t \st \xi \in C_{b}((0,1)\times \mathbb{R}^{d}, S_{d}), \; \xi((0,1)\times \mathbb{R}^{d})\subset \dom(f^{*})\right\}\\
&\leq\int^{1}_{0}\int_{\mathbb{R}^{d}}f\left(\frac{\diff \lambda_{t}}{\diff |\lambda_{t}|}\right) \diff |\lambda_{t}|\diff t.
\end{align*}
Using this together with the fact that $\lambda$ is a solution to \eqref{1.8} and taking into account (\ref{1.14}), we deduce \eqref{1.10}, which  completes our proof of Proposition~\ref{prop: 1.7}.
\end{proof}
As a byproduct of the proof of Proposition~\ref{prop: 1.7}, we obtain the next
\begin{cor} \label{cor: 1.9} If the infimum in \eqref{1.7} is achieved on $(\varrho_{t} \otimes \mathcal{L}^{1} \mres (0,1), \lambda_{t} \otimes \mathcal{L}^{1} \mres (0,1))$, then the infimum in (\ref{1.10}) is achieved on $\lambda=\pi^{x}_{\#}(\lambda_{t}\otimes \mathcal{L}^{1}\mres (0,1))$. Conversely, if the infimum in (\ref{1.10}) is achieved on $\lambda$, then the infimum in (\ref{1.7}) is achieved on $(\varrho_{t} \otimes \mathcal{L}^{1} \mres (0,1), \lambda_{t} \otimes \mathcal{L}^{1} \mres (0,1))$, where $\varrho_{t}=(1-t)\mu+t\nu$ and $\lambda_{t}=\lambda$.
\end{cor}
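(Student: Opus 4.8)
The plan is to read off both implications directly from the two halves of the proof of Proposition~\ref{prop: 1.7}, using that proposition itself to identify the two infima. Write $I_{\mathscr{D}}$ and $I_{\mathscr{S}}$ for the infima in \eqref{1.7} and \eqref{1.10} respectively, so that $I_{\mathscr{D}}=I_{\mathscr{S}}=F(\mu,\nu)$ by Proposition~\ref{prop: 1.7}. First I would dispose of the case $F(\mu,\nu)=+\infty$, in which both assertions are either vacuous or immediate, and then assume $F(\mu,\nu)<+\infty$.

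For the forward implication I would start from a minimizing pair $(\varrho_{t}\otimes\mathcal L^{1}\mres(0,1),\lambda_{t}\otimes\mathcal L^{1}\mres(0,1))$ for \eqref{1.7} and set $\lambda:=\pi^{x}_{\#}(\lambda_{t}\otimes\mathcal L^{1}\mres(0,1))$. The second half of the proof of Proposition~\ref{prop: 1.7} already supplies everything that is needed: that $\lambda\in\mathcal M(\mathbb R^{d},S^{+}_{d})$ has support in $X$ and solves \eqref{1.8} (this is \eqref{eq. 1.16}), together with the estimate
\[
\int_{X}f\!\left(\frac{\diff\lambda}{\diff|\lambda|}\right)\diff|\lambda|\le\int_{0}^{1}\int_{X}f\!\left(\frac{\diff\lambda_{t}}{\diff|\lambda_{t}|}\right)\diff|\lambda_{t}|\diff t=I_{\mathscr{D}}=I_{\mathscr{S}}.
\]
Hence $\lambda$ is admissible for \eqref{1.10} with cost not exceeding the infimum, so it is a minimizer.

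For the converse, given a minimizer $\lambda$ of \eqref{1.10}, I would take $\varrho_{t}=(1-t)\mu+t\nu$ and the constant curve $\lambda_{t}\equiv\lambda$, $t\in[0,1]$. The first half of the proof of Proposition~\ref{prop: 1.7} shows verbatim that the pair $(\varrho_{t}\otimes\mathcal L^{1}\mres(0,1),\lambda_{t}\otimes\mathcal L^{1}\mres(0,1))$ solves \eqref{1.1}, satisfies \eqref{asscomsup}, and has $\varrho_{0}=\mu$, $\varrho_{1}=\nu$, so it is admissible for \eqref{1.7}; and since $\lambda_{t}$ does not depend on $t$, its cost is $\int_{0}^{1}\int_{X}f(\diff\lambda_{t}/\diff|\lambda_{t}|)\,\diff|\lambda_{t}|\,\diff t=\int_{X}f(\diff\lambda/\diff|\lambda|)\,\diff|\lambda|=I_{\mathscr{S}}=I_{\mathscr{D}}$, so this pair realizes $I_{\mathscr{D}}$.

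I do not expect a genuine obstacle here: the corollary is a bookkeeping consequence of Proposition~\ref{prop: 1.7}, and the only two points requiring a glance back at its proof are the fact that the cost does not increase under the pushforward $\pi^{x}_{\#}$ (the chain of inequalities there, via \cite[Theorem~5]{Convfunct}) and the collapse of the time integral for the $t$-independent competitor in the converse direction.
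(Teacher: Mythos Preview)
Your proposal is correct and matches the paper's approach exactly: the paper does not give a separate proof but states the corollary as a byproduct of the proof of Proposition~\ref{prop: 1.7}, and your argument is precisely the unpacking of that byproduct---using the second half of that proof (pushforward admissibility plus the cost inequality) for the forward implication and the first half (the linear interpolation $\varrho_t=(1-t)\mu+t\nu$ with constant $\lambda_t=\lambda$) for the converse, together with the equality of the two infima from Proposition~\ref{prop: 1.7}.
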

\begin{rem}\label{rem about abscont}
In general, one cannot find a martingale $(X_{t})_{t \in [0,1]}$ (for the definition, see, for instance, Section~2 in \cite{Huesmann-Trevisan-2019}) with continuous paths whose marginals are $\varrho_{t}=(1-t)\mu+t \nu$ for $t \in [0,1]$. Indeed, if $\mu=\delta_{\frac{x+y}{2}}$ and $\nu=\frac{1}{2}(\delta_{x}+\delta_{y})$, the optimal curve $\varrho_{t}=(1-t)\mu+t \nu$ is absolutely continuous in the $1$-Wasserstein distance, but it is not absolutely continuous in the $q$-Wasserstein distance with $q>1$, which from the particle point of view means that the particles must jump from $\frac{x+y}{2}$ to $x$ or $y$ at a certain rate. This phenomenon occurs because, in contrast to \cite{Huesmann-Trevisan-2019}, in our case the cost $f$ is not $q$-admissible, which correlates with the fact that for a solution $(\varrho, \lambda)$ of the problem \eqref{1.7}, $\lambda$ does not have to be absolutely continuous with respect to $\varrho$ (the reader may also consult \cite[Remark~3.3]{MR2870228}). 
\end{rem}

\subsection{Existence and duality}
We introduce a dual formulation to \eqref{1.7} and prove the existence of a minimizer  when $F(\mu, \nu)<+\infty$. 
\begin{theorem} \label{prop: 1.13} For each $\mu,\, \nu \in \mathcal{P}_{2}(\mathbb{R}^{d})$ the following equality holds
\begin{equation}\label{Dual}
F(\mu,\nu)=\sup\left\{\langle\nu-\mu, \psi\rangle \st \psi \in C^{2}_{b}(\mathbb{R}^{d}), \; -f^{*}\left(\frac{1}{2}\nabla^{2}\psi \right)=0\,\ \text{in}\,\ \mathbb{R}^{d}\right\}.
\end{equation}
Moreover, if $F(\mu,\nu)<+\infty$, then the infimum in \eqref{1.7} (and in \eqref{1.10}) is actually a minimum. 
\end{theorem}
\begin{rem}\label{rem 1.12}
	The function $f$ is not strictly convex and, generally speaking, is not coercive (see, for instance, the example~\ref{notcoerc}). Notice that in \cite[Theorem~4.3]{Huesmann-Trevisan-2019} the strict convexity and $q$-coercivity (for some $q>1$; see Section~2 in \cite{Huesmann-Trevisan-2019} for the definition) of the cost function are used to prove the existence of a minimizer of the primal problem. In particular, the fact that the Fenchel conjugate of the cost function is finite on $\{t I_{d} \st t \geq 0\}$ (since the cost function in \cite{Huesmann-Trevisan-2019} is $q$-coercive) is used to prove that a minimizer in the Fenchel-Rockafellar duality theorem (see (4.5) in \cite[Theorem~4.3]{Huesmann-Trevisan-2019}) solves the Fokker-Planck equation, where the diffusion term is weighted accordingly with the mass.  In our setting, if $\dom(f^{*})$ contained $\{t I_{d} \st t \geq 0\}$, we would have  $f= +\infty$ identically on $S^{++}_{d}$. It is also worth noting that our dual formulation \eqref{Dual} cannot be derived from the results of \cite{Tan-Touzi-2013}, established using stochastic control theory. Indeed, in view of Remark~\ref{rem about abscont}, the problem \eqref{1.7} and the semimartingale transportation problems studied in \cite{Tan-Touzi-2013} are different. Furthermore, the existence and duality results in \cite{Tan-Touzi-2013} are established under the crucial coercivity assumption (see Assumption~3.3 in \cite{Tan-Touzi-2013}), which guarantees tightness of any minimizing sequences of the (primal) problems in \cite{Tan-Touzi-2013} and does not hold for the cost functions behaving like $c(a) \sim |a|$ for $a \in S^{+}_{d}$, in particular, for our cost $f$ (notice also that the Assumption~8.1~(3) in \cite{10.3150/21-BEJ1346} does not hold for $f$). Thirdly, the coercivity of $f$ is not required in Theorem~\ref{prop: 1.13}.
	\end{rem}
\begin{proof}[Proof of Theorem~\ref{prop: 1.13}] Let $\Psi$ be the convex function of Lemma~\ref{lemma conjugate prop}. We write the dual pairing as $\sigma(\xi)$ for $\sigma \in (C_{b}(\mathbb{R}^{d}, S_{d}))^{\prime}$ and $\xi \in C_{b}(\mathbb{R}^{d}, S_{d})$. Then the following implication holds: if $\Psi^{**}(\sigma)<+\infty$, then $\sigma(\xi)\geq 0$ for each $\xi \in C_{b}(\mathbb{R}^{d}, S^{+}_{d})$. Assume by contradiction that $\Psi^{**}(\sigma)<+\infty$ and $\sigma(\xi)>0$ for some $\xi \in C_{b}(\mathbb{R}^{d}, S_{d})$ such that $-\xi \in C_{b}(\mathbb{R}^{d}, S^{+}_{d})$. Since $\Psi^{*}(t\xi)=0$ for each $t\geq 0$, $\Psi^{**}(\sigma)\geq t \sigma(\xi)>0$. Letting $t$ tend to $+\infty$, we obtain $\Psi^{**}(\sigma)=+\infty$, which leads to a contradiction and proves the above implication. 

Next, following \cite{Huesmann-Trevisan-2019}, we say that $\xi \in C_{b}(\mathbb{R}^{d}, S_{d})$ is represented by $\varphi \in C^{2}_{b}(\mathbb{R}^{d})$ if $\xi = -\frac{1}{2} \nabla^{2}\varphi$. We define $\Theta:C_{b}(\mathbb{R}^{d}, S_{d}) \to \mathbb{R} \cup \{+\infty\}$ by
\[
\Theta(\xi) = \begin{cases}
 \langle \mu -  \nu, \varphi \rangle \,\ & \text{if}\,\ \xi \,\ \text{is represented by}\,\ \varphi,\\
+\infty \,\ & \text{otherwise}.
\end{cases}
\]
If $\varphi_{1}$ and $\varphi_{2}$ represent $\xi$, then $\frac{1}{2}\nabla^{2}(\varphi_{1}-\varphi_{2})=0$, and hence $\varphi_{1}(x)=a+y \cdot x+ \varphi_{2}(x)$ for some fixed $a \in \mathbb{R}$ and $y \in \mathbb{R}^{d}$. Without loss of generality, we can assume that $\int_{\mathbb{R}^{d}}(a+y\cdot x) \diff\mu(x)=\int_{\mathbb{R}^{d}}(a+y \cdot x) \diff \nu(x)$, because otherwise $F(\mu,\nu)$ and the supremum in \eqref{Dual} are equal to $+\infty$: $F(\mu,\nu)=+\infty$ because $\mu$ and $\nu$ would not be in convex order; the supremum in \eqref{Dual} is equal to $+\infty$ by letting $\psi(x)=t y\cdot x$, which satisfies $-f^{*}(\frac{1}{2}\nabla^{2}\psi)=0$ in $\mathbb{R}^{d}$, and letting $t$ tend to $\pm \infty$ depending on the sign of the difference. This implies that $\langle \mu-\nu, \varphi_{1}-\varphi_{2}\rangle=0$. Thus, $\Theta$ does not depend on the choice of $\varphi$. It is also worth noting that the set of represented mappings $\xi \in C_{b}(\mathbb{R}^{d}, S_{d})$ is a linear subspace on which $\Theta$ is linear. In particular, $\Theta$ is positively $1$-homogeneous with the Fenchel conjugate
\[
\Theta^{*}(\sigma)=\sup\left\{\sigma(\xi) + \langle \nu -\mu, \varphi \rangle \st \xi \,\ \text{is represented}\right\}
\]
taking values in $\{0,+\infty\}$ and $\Theta^{*}(\sigma)=0$ if and only if 
\begin{equation}\label{dualpairingfpeextended}
\int_{\mathbb{R}^{d}} \frac{1}{2}\nabla^{2} \varphi : \diff \sigma = \int_{\mathbb{R}^{d}} \varphi \diff \nu - \int_{\mathbb{R}^{d}} \varphi \diff \mu \,\  \,\ \forall \varphi \in C^{2}_{b}(\mathbb{R}^{d}),
\end{equation}
where we interpret the integral on the left-hand side of the above equality as a duality pairing. 

We have proved that if $\Psi^{**}(\sigma)<+\infty$, then $\sigma$ is a nonnegative bounded linear functional. We claim that $\sigma$ is tight and hence induced by a measure. Let $g: \mathbb{R} \to [0,1]$ be a smooth nondecreasing function such that $g(t)=0$ for  $t \in (-\infty,1/2]$, $g(t)=1$ for  $t \in [1,+\infty)$ and $|g^{\prime}(t)|, |g^{\prime\prime }(t)| \leq 4$ for  $t \in \mathbb{R}$. Define the function $h:\mathbb{R} \to [0,+\infty)$ by $h(t)=\int_{-\infty}^{t} g(s)\diff s$. Then $h$ is a smooth function, $h^{\prime}(t)=g(t)$ and $h^{\prime\prime}(t)=g^{\prime}(t)\geq 0$ for $t \in \mathbb{R}$. For $L>0$ and $x \in \mathbb{R}^{d}$, we set $\varphi^{L}(x)=h(|x|^{2}-L^{2})$. Notice that $\xi^{L}(x)=-(g(|x|^{2}-L^{2})I_{d} + 2g^{\prime}(|x|^{2}-L^{2}) x\otimes x)$ is represented by $\varphi^{L}$ and $-\xi^{L} \geq I_{d}$ on $\mathbb{R}^{d} \setminus B_{\sqrt{L^{2}+1}}(0)$. Since $\nu \in \mathcal{P}_{2}(\mathbb{R}^{d})$ and $h(|x|^{2}-L^{2})\leq |x|^{2}-L^{2}\leq |x|^{2}$ on $\mathbb{R}^{d} \setminus B_{L}(0)$, for each fairly small $\varepsilon>0$, there exists $L>0$ large enough such that 
\begin{equation}\label{tightcondder}
\int_{\mathbb{R}^{d}}h(|x|^{2}-L^{2}) \diff \nu(x) < \varepsilon.
\end{equation}
For each $\xi \in C_{b}(\mathbb{R}^{d}, S_{d})$ such that $|\xi|\leq 1$ and $\supp(\xi) \subset \mathbb{R}^{d} \setminus B_{\sqrt{L^{2}+1}}(0)$, it holds $\xi \leq -\xi^{L}$ and $-\xi \leq -\xi^{L}$. Using this, the facts that $\sigma$ is a nonnegative linear functional and $\xi^{L}=-\frac{1}{2}\nabla^{2}\varphi^{L}$, \eqref{dualpairingfpeextended},  \eqref{tightcondder} and the fact that $\mu$ is a nonnegative measure, we deduce the following
\[
|\sigma(\xi)| \leq \sigma(-\xi^{L})=\int_{\mathbb{R}^{d}}h(|x|^{2}-L^{2})\diff (\nu(x)-\mu(x)) < \varepsilon,
\]
which proves that $\sigma$ is tight and hence induced by a measure. 

The mapping $I_{d}$ is represented by $\varphi(x)=-|x|^{2}$, $\Psi^{*}$ is continuous at $-I_{d}$ and $\Theta(I_{d})<+\infty$. After all, applying the formula for the conjugate of the sum $\Psi^{*}(-\cdot)+\Theta(\cdot)$ at $0 \in (C_{b}(\mathbb{R}^{d}, S_{d}))^{\prime}$ (see, for instance, \cite[Proposition~2.3~$(i)$]{BOUCHITTE2006642} or \cite[Theorem~1.12]{MR2759829}), we obtain 
\begin{equation}\label{dualitytheoremhb}
\inf\{\Psi^{**}(\sigma) + \Theta^{*}(\sigma) \st \sigma \in (C_{b}(\mathbb{R}^{d}, S_{d}))^{\prime}\}=\sup\{-\Psi^{*}(-\xi)-\Theta(\xi) \st \xi \in C_{b}(\mathbb{R}^{d}, S_{d})\},
\end{equation}
where the infimum is actually a minimum if the supremum, coinciding with the supremum in \eqref{Dual}, is finite. The latter holds if and only if $F(\mu,\nu)<+\infty$. Indeed, if the supremum in \eqref{dualitytheoremhb} is finite, then according to \cite[Proposition~2.3~$(ii)$]{BOUCHITTE2006642} (or \cite[Theorem~1.12]{MR2759829}), the infimum in \eqref{dualitytheoremhb} is actually a minimum and if $\sigma$ is a minimizer, then we have proved that $\sigma \in\mathcal{M}(\mathbb{R}^{d}, S^{+}_{d})$, $|\sigma|(\mathbb{R}^{d})<+\infty$, $\sigma$ solves \eqref{1.8} for $\mu$ and $\nu$ (see \eqref{dualpairingfpeextended})  and $\Psi(\sigma)=\Psi^{**}(\sigma)$. This, together with  Proposition~\ref{prop: 1.7}, implies that the left-hand side in \eqref{dualitytheoremhb} coincides with $F(\mu,\nu)$, and the infimum in \eqref{1.7} (and in \eqref{1.10}) is actually a minimum. On the other hand, if $F(\mu,\nu)<+\infty$, then there exists $\sigma \in \mathcal{M}(\mathbb{R}^{d}, S^{+}_{d})$ solving \eqref{1.8} for $\mu$ and $\nu$.  For each $\psi\in C^{2}_{b}(\mathbb{R}^{d})$ such that $-f^{*}(\frac{1}{2}\nabla^{2} \psi)=0$ in $\mathbb{R}^{d}$, we have $\frac{1}{2} \nabla^{2}\psi :\frac{\diff \sigma}{\diff |\sigma|} \le f(\frac{\diff \sigma}{\diff |\sigma|})$ $|\sigma|$-a.e. on $\mathbb{R}^{d}$. Hence  
\[
\langle\nu-\mu,  \psi\rangle=\int_{\mathbb{R}^{d}}\frac{1}{2}\nabla^{2}\psi :\diff \sigma  \le \int_{\mathbb{R}^{d}} f\left(\frac{\diff \sigma}{\diff |\sigma|}\right) \diff |\sigma|,
\]
which implies that the supremum in \eqref{dualitytheoremhb} is less than or equal to $\int_{\mathbb{R}^{d}} f(\frac{\diff \sigma}{\diff |\sigma|}) \diff |\sigma|<+\infty$. 
This completes our proof of Theorem~\ref{prop: 1.13}.
\end{proof}

\begin{cor} \label{cor: 1.15} For each $\mu, \nu \in \mathcal{P}_{2}(\mathbb{R}^{d})$ the following estimate holds
	\begin{equation}\label{eq. 1.21}
	F(\mu,\nu) \geq f\left(\int_{\mathbb{R}^{d}} x \otimes x (\diff \nu(x) - \diff \mu(x)) \right).
	\end{equation}
\end{cor}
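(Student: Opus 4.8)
The plan is to use the dual formulation of $F$ from Theorem~\ref{prop: 1.13} (or equivalently Remark~\ref{rem 1.12}) and exhibit a single admissible dual competitor $\psi$ whose associated value $\langle \nu - \mu, \psi\rangle$ equals the right-hand side of \eqref{eq. 1.21}. The natural candidate is a quadratic function: take a symmetric matrix $A \in M_d$ in (the interior of) $\dom(f^{*})$ and set $\psi_0(x) = A : (x\otimes x) = x^{T} A x$, so that $\frac{1}{2}\nabla^2 \psi_0 = A$ is constant, hence $\frac{1}{2}\nabla^2\psi_0(\mathbb{R}^d) = \{A\} \subset \dom(f^{*})$, and $\langle \nu - \mu, \psi_0\rangle = A : \int_X x\otimes x\,(\diff\nu(x) - \diff\mu(x))$. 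Writing $Q := \int_X x\otimes x\,(\diff\nu(x)-\diff\mu(x))$ (a symmetric matrix, since $x\otimes x$ is symmetric), taking the supremum over all such $A$ gives $F(\mu,\nu) \geq \sup\{A : Q \mid A \in \dom(f^{*}),\ A \text{ symmetric}\}$, and by the characterization \eqref{mf f usual} of $f^{*}$ — namely $\dom(f^{*}) = \{A \mid A : M \leq f(M)\ \forall M \in \dom(f)\}$ together with $f = f^{**}$ (valid since $f$ is convex, as it is sublinear, and lower semicontinuous by $(A2)$) — this supremum is exactly $f^{**}(Q) = f(Q)$, at least if one is allowed to restrict $A$ to symmetric matrices and $Q$ is symmetric.

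The one genuine subtlety is the compact-support requirement in Theorem~\ref{prop: 1.13}: the competitor must lie in $C^{2}_{c}(\mathbb{R}^{d})$, whereas $\psi_0(x) = x^{T}Ax$ is not compactly supported. This is handled exactly as in the proof of Corollary~\ref{cor 1.5}: since $X$ is bounded, choose $\psi \in C^{2}_{c}(\mathbb{R}^{d})$ with $\psi = \psi_0$ on an open neighborhood of $X$; then $\frac{1}{2}\nabla^2\psi = A$ on a neighborhood of $X$, so in particular $\frac{1}{2}\nabla^2\psi(\Omega) \subset \{A\} \subset \dom(f^{*})$ and we may invoke the form of the duality in Remark~\ref{rem 1.12}. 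Moreover $\langle \nu - \mu, \psi\rangle = \langle \nu - \mu, \psi_0\rangle$ because $\mu, \nu$ are supported in $X$ and $\psi = \psi_0$ there. Hence every $A \in \dom(f^{*})$ contributes $A : Q$ to the supremum defining $F(\mu,\nu)$.

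It remains to argue $\sup\{A : Q \mid A \in \dom(f^{*})\} = f(Q)$. Since $\dom(f) \subset S^{+}_{d}$ and the matrices $x\otimes x$ are symmetric, $Q$ is symmetric; but $\dom(f^{*})$ a priori contains non-symmetric matrices as well, and only the symmetric part of $A$ pairs non-trivially with the symmetric $Q$ (as $A : Q = \frac{1}{2}(A + A^{T}) : Q$), while $\frac{1}{2}(A+A^{T}) \in \dom(f^{*})$ whenever $A \in \dom(f^{*})$ because $f$ is finite only on symmetric matrices (so the defining inequality $A : M \leq f(M)$ for $M \in \dom(f) \subset S^{+}_d$ only sees the symmetric part of $A$). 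Therefore $\sup\{A : Q \mid A \in \dom(f^{*})\} = \sup\{B : Q \mid B \in \dom(f^{*})\cap S_d\} = \sup\{B : Q \mid B : M \leq f(M)\ \forall M \in \dom(f)\}$, and this last supremum is precisely $f^{**}(Q)$ by definition of the biconjugate, which equals $f(Q)$ since $f$ is convex and lower semicontinuous. The main (minor) obstacle is this bookkeeping about symmetric versus general matrices in $\dom(f^{*})$ and the passage $f^{**} = f$; everything else is a direct substitution of a quadratic test function into the already-established dual formula.
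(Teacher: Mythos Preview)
Your proof is correct and takes essentially the same approach as the paper: test the dual formula of Theorem~\ref{prop: 1.13} against cutoff-modified quadratics $x \mapsto x^{T} A x$ with $A \in \dom(f^{*})$, then recover $f(Q)$ as $f^{**}(Q) = \sup_{A \in \dom(f^{*})} A:Q$. The paper uses $\psi(x)=\theta(x)\,Ax\cdot x$ with a cutoff $\theta\in C^{2}_{c}(\mathbb{R}^{d})$ equal to $1$ near $X$ and symmetrizes $A$ at the outset (noting $\frac{1}{2}\nabla^{2}\psi = \frac{1}{2}(A+A^{T})$ on $X$), whereas you restrict to symmetric $A$ first and justify the restriction a posteriori via the same symmetrization trick; the two arguments are otherwise identical.
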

\begin{proof}[Proof of Corollary \ref{cor: 1.15}] Let $A \in \dom(f^{*})$ be arbitrary and $\psi (x)=Ax\cdot x$ for each $x\in \mathbb{R}^{d}$. Then we have $-f^{*}(\frac{1}{2}\nabla^{2}\psi)=-f^{*}(A)  =0$ in $\mathbb{R}^{d}$, which, in view of Theorem~\ref{prop: 1.13}, yields
\begin{equation}\label{eq. 1.22}
F(\mu,\nu) \geq \int_{\mathbb{R}^{d}} Ax\cdot x \diff \nu(x) - \int_{\mathbb{R}^{d}} Ax\cdot x \diff \mu(x) =A:\left(\int_{\mathbb{R}^{d}} x \otimes x \diff \nu(x) - \int_{\mathbb{R}^{d}} x \otimes x \diff \mu(x)\right).   
\end{equation}
Since $A\in \dom(f^{*})$ was arbitrarily chosen, $f^{*}=0$ on $\dom(f^{*})$ and $f=f^{**}$ (this comes from \eqref{assumpsublinearf}, \eqref{assumplscf} and \cite[Theorem~2.1~$(i)$]{BOUCHITTE2006642}), (\ref{eq. 1.22}) implies (\ref{eq. 1.21}), which completes our proof of Corollary~\ref{cor: 1.15}.

\end{proof}

\subsection{Lower semicontinuity, convexity and subadditivity}
Hereinafter, we use the notation $\Phi_{ 2}(\mathbb{R}^{d})$ for the space of all functions $\varphi \in C(\mathbb{R}^{d})$ such that there exists a constant $C>0$ such that $|\varphi(x)|\leq C(1+|x|^{2})$ for each $x \in \mathbb{R}^{d}$. 
\begin{prop} \label{prop: 1.16} The following assertions hold.
\begin{enumerate}[label=(\roman*)]
\item \label{item 1 lcs} $F$ is convex on $\mathcal{P}_{2}(\mathbb{R}^{d})\times \mathcal{P}_{2}(\mathbb{R}^{d})$ and lower semicontinuous with respect to the weak topology on $\mathcal{P}_{2}(\mathbb{R}^{d})\times \mathcal{P}_{2}(\mathbb{R}^{d})$ in duality with $\Phi_{2}(\mathbb{R}^{d})\times \Phi_{2}(\mathbb{R}^{d})$.
\item \label{item 2 lcs} For each choice of $\mu_{1},\, \mu_{2},\, \mu_{3} \in \mathcal{P}_{2}(\mathbb{R}^{d})$,
\[
F(\mu_{1}, \mu_{3}) \leq F(\mu_{1}, \mu_{2}) + F(\mu_{2}, \mu_{3}).
\]
\end{enumerate}
\end{prop}
\begin{proof} It is a direct consequence of Theorem~\ref{prop: 1.13} that $F$ is convex and lower semicontinuous with respect to the specified product topology, since it is represented as the supremum of the family consisting of linear functionals that are continuous with respect to this topology. This proves $\ref{item 1 lcs}$.

Let us prove $\ref{item 2 lcs}$. For each $\psi \in C^{2}_{b}(\mathbb{R}^{d})$ such that $-f^{*}(\frac{1}{2} \nabla^{2}\psi)=0$ in $\mathbb{R}^{d}$, using Theorem~\ref{prop: 1.13}, we have
\[
\langle \mu_{3}-\mu_{1}, \psi\rangle = \langle \mu_{2}-\mu_{1}, \psi \rangle + \langle \mu_{3}-\mu_{2}, \psi \rangle \le F(\mu_{1}, \mu_{2})+F(\mu_{2}, \mu_{3}),
\]
which implies $\ref{item 2 lcs}$ and completes our proof of Proposition~\ref{prop: 1.16}.
\end{proof}

\subsection{Associated weak transport problem}
We define the cost function $G:\mathbb{R}^{d} \times \mathcal{P}_{2}(\mathbb{R}^{d}) \to [0,+\infty]$ by
\begin{equation}
G(x,p)=F(\delta_{x},p). \label{1.16}
\end{equation}
By Proposition~\ref{prop: 1.16}, $G$ is lower semicontinuous in $(x,p)$ and convex in $p$. For each $\mu, \nu \in \mathcal{P}_{2}(\mathbb{R}^{d})$, we consider the following weak transport problem
\begin{equation*}
 \inf \left\{\int_{\mathbb{R}^{d}}G(x,\gamma^{x}) \diff \mu(x) \st \gamma \in \Pi(\mu,\nu) \right\}
\end{equation*}
and define the functional $H:\mathcal{P}_{2}(\mathbb{R}^{d})\times \mathcal{P}_{2}(\mathbb{R}^{d})\to [0,+\infty]$ by
\begin{equation}\label{thedefofH}
H(\mu,\nu)=\inf \left\{\int_{\mathbb{R}^{d}}G(x,\gamma^{x}) \diff \mu(x) \st \gamma \in \Pi(\mu,\nu) \right\}. 
\end{equation}
Since $G(x,\delta_{x})=0$ for each $x \in \mathbb{R}^{d}$ and $\gamma^{x}\otimes \mu\in \Pi(\mu,\mu)$ when $\gamma^{x}=\delta_{x}$ for $\mu$-a.e. $x \in \mathbb{R}^{d}$,
\begin{equation*}
H(\mu,\mu)=F(\mu,\mu)=0 \,\  \,\ \forall \mu \in \mathcal{P}_{2}(\mathbb{R}^{d}),
\end{equation*}
which implies that  the functional $H$ is proper (i.e., $\dom(H)\not = \emptyset$). 

We shall prove the equality $F=H$. First, we show that $H\geq F$, which is a consequence of Theorem~\ref{prop: 1.13}. To establish the converse inequality, which is a delicate matter, we develop the dual result of \cite{MR4029731} and the theory for subadditive costs that appeared in \cite[Section~6]{Alibert-Bouchitte-Champion-2019}, where the role of $\mathbb{R}^{d}$ is replaced by the closure of a bounded open convex subset of $\mathbb{R}^{d}$. The main difficulty is that, unlike \cite{Alibert-Bouchitte-Champion-2019}, we work with the set of probability measures on $\mathbb{R}^{d}$, which is not compact with respect to the weak topology. 

\begin{prop}\label{prop lower bound} For each $\mu, \nu \in \mathcal{P}_{2}(\mathbb{R}^{d})$, $H(\mu,\nu)\geq F(\mu,\nu)$.
\end{prop}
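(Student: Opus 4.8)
The plan is to reduce the inequality to the dual formulation of $F$ from Theorem~\ref{prop: 1.13} together with the fact that $H$ is built by gluing copies of $G(x,p) = F(\delta_x, p)$. We may assume $H(\mu,\nu) < +\infty$, so in particular $\mu, \nu \in \mathcal{M}_+(X)$ with $\mu(X) = \nu(X)$; otherwise there is nothing to prove. Fix an arbitrary admissible test function $\psi \in C^2_c(\mathbb{R}^d)$ with $\frac{1}{2}\nabla^2\psi(X) \subset \dom(f^*)$ appearing in \eqref{Dual}. I want to show that $\langle \nu - \mu, \psi\rangle \le H(\mu,\nu)$; taking the supremum over all such $\psi$ then yields $F(\mu,\nu) \le H(\mu,\nu)$ by Theorem~\ref{prop: 1.13}.

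The key observation is that for a single source Dirac mass, $G(x,p) = F(\delta_x, p)$, so applying Theorem~\ref{prop: 1.13} to the pair $(\delta_x, p)$ gives, for every admissible $\psi$,
\[
\langle p - \delta_x, \psi\rangle = \int_X \psi\, \diff p - \psi(x) \le F(\delta_x, p) = G(x,p).
\]
Now let $\gamma \in \Pi(\mu,\nu)$ be an arbitrary transport plan with disintegration $\gamma = \gamma^x \otimes \mu$, so that $\nu = \int_X \gamma^x \, \diff\mu(x)$ in the sense of marginals. Integrating the pointwise inequality above against $\diff\mu(x)$, and using that the second marginal of $\gamma$ is $\nu$, we get
\[
\langle \nu - \mu, \psi\rangle = \int_X \left( \int_X \psi\, \diff\gamma^x - \psi(x) \right) \diff\mu(x) \le \int_X G(x,\gamma^x)\, \diff\mu(x).
\]
Taking the infimum over $\gamma \in \Pi(\mu,\nu)$ on the right gives $\langle \nu - \mu, \psi\rangle \le H(\mu,\nu)$, and then the supremum over admissible $\psi$ on the left gives $F(\mu,\nu) \le H(\mu,\nu)$ by \eqref{Dual}, completing the proof.

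I do not expect a serious obstacle here: the only points requiring a little care are checking that the disintegration identity $\int_X \gamma^x\,\diff\mu(x) = \nu$ may be paired with the fixed continuous function $\psi$ (which is immediate since $\psi \in C_c \subset C_b$ and all measures are finite with mass $\mu(X) = \nu(X)$ and supported in the compact set $X$), and noting that the integrand $x \mapsto \int_X \psi\,\diff\gamma^x - \psi(x)$ is $\mu$-measurable — which follows from measurability of the disintegration $x \mapsto \gamma^x$. If $H(\mu,\nu) = +\infty$ the inequality is trivial, and if there is no admissible $\psi$ at all the supremum in \eqref{Dual} is taken as $\sup\emptyset$, but since $\psi = 0$ is always admissible the supremum is $\ge 0$ and the argument goes through unchanged.
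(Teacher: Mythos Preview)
Your proof is correct and follows essentially the same route as the paper's own argument: both use the dual characterization of $F$ from Theorem~\ref{prop: 1.13} applied to the pair $(\delta_x,\gamma^x)$ to obtain the pointwise bound $G(x,\gamma^x)\ge \int_X \psi\,\diff\gamma^x - \psi(x)$, integrate against $\mu$, and then take the infimum over $\gamma$ and the supremum over admissible $\psi$. The only cosmetic difference is the order in which you fix $\psi$ and $\gamma$; the paper fixes $\gamma$ first and $\varphi$ second, but this is immaterial.
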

\begin{proof} Fix an arbitrary $\gamma=\gamma^{x}\otimes \mu \in \Pi(\mu,\nu)$. According to the definition of $G$ and Theorem~\ref{prop: 1.13}, for $\mu$-a.e. $x\in \mathbb{R}^{d}$  and for each $\psi \in C^{2}_{b}(\mathbb{R}^{d})$ such that $-f^{*}(\frac{1}{2}\nabla^{2}\psi)=0$ in $\mathbb{R}^{d}$, we have 
\begin{equation*}
G(x, \gamma^{x}) \geq \int_{\mathbb{R}^{d}}\psi(y) \diff \gamma^{x}(y) - \psi(x).
\end{equation*}
Integrating both sides of the above inequality over $\mathbb{R}^{d}$ with respect to $\mu$, we obtain
\begin{align*}
\int_{\mathbb{R}^{d}}G(x, \gamma^{x}) \diff \mu(x) &\geq \int_{\mathbb{R}^{d}} \int_{\mathbb{R}^{d}}\psi(y)\diff \gamma^{x}(y) \diff \mu (x) - \int_{\mathbb{R}^{d}}\psi(x) \diff \mu(x) 
= \langle \nu- \mu, \psi \rangle.
\end{align*}
This,  since $\gamma \in \Pi(\mu, \nu)$ and $\psi \in C^{2}_{b}(\mathbb{R}^{d})$ satisfying $-f^{*}(\frac{1}{2}\nabla^{2}\psi)=0$ in $\mathbb{R}^{d}$, according to Theorem~\ref{prop: 1.13}, completes our proof of Proposition~\ref{prop lower bound}.
\end{proof}

For each $x \in \mathbb{R}^{d}$ and for each universally measurable function $\varphi: \mathbb{R}^{d} \to \mathbb{R}$ satisfying the estimate $|\varphi(\cdot)|\leq C(1+|\cdot|^{2})$ for some constant $C>0$, we define
\begin{equation}\label{G-transform ABC}
\varphi^{G}(x):=\inf\left\{\int_{\mathbb{R}^{d}}\varphi\diff p + G(x,p) \st p \in \mathcal{P}_{2}(\mathbb{R}^{d})\right\}.
\end{equation}
Inasmuch as $G(x,\delta_{x})=0$,
\begin{equation}\label{eq ineq49050t5ti0i45t}
\varphi^{G}\leq \varphi.
\end{equation}
We denote by $\Phi_{bb,2}(\mathbb{R}^{d})$ the subset of functions in $\Phi_{2}(\mathbb{R}^{d})$ which are bounded from below. 
\begin{rem}\label{remark G->phi is welld}
If $\varphi \in \Phi_{bb, 2}(\mathbb{R}^{d})$, then $\varphi^{G}$ is lower semianalytic (and hence universally measurable, see \cite[Proposition~7.47]{MR0511544}), bounded from below, $|\varphi^{G}(\cdot)|\leq C(1+|\cdot|^{2})$ for some constant $C>0$ and the integral $\int_{\mathbb{R}^{d}} \varphi^{G} \diff p$ is well defined for all $p \in \mathcal{P}_{2}(\mathbb{R}^{d})$. In particular, for each $x \in \mathbb{R}^{d}$, we can define $\varphi^{GG}(x)$. 
\end{rem}

\begin{prop}\label{proposition 1.17}  The following assertions hold.
\begin{enumerate}[label=(\roman*)]
\item \label{H 1-poshomog} $H$ is convex on $\mathcal{P}_{2}(\mathbb{R}^{d})\times \mathcal{P}_{2}(\mathbb{R}^{d})$ and lower semicontinuous with respect to the weak topology on $\mathcal{P}_{2}(\mathbb{R}^{d})\times \mathcal{P}_{2}(\mathbb{R}^{d})$ in duality with $\Phi_{2}(\mathbb{R}^{d})\times \Phi_{2}(\mathbb{R}^{d})$. If $H(\mu,\nu)<+\infty$, then the weak transport problem \eqref{thedefofH} admits a solution. 
\item \label{dualformh} For each $\mu, \nu \in \mathcal{P}_{2}(\mathbb{R}^{d})$, 
\begin{equation}
 H(\mu,\nu)=\sup\left\{\int_{\mathbb{R}^{d}}\varphi^{G}\diff \mu - \int_{\mathbb{R}^{d}}\varphi\diff \nu \st \varphi \in \Phi_{bb,2}(\mathbb{R}^{d})\right\}. \label{1.29}
\end{equation}
\end{enumerate}
\end{prop}
\begin{proof}
 According to Proposition~\ref{prop: 1.16}~$\ref{item 1 lcs}$, $(x,p)\mapsto G(x,p)$ is convex in $p$ and lower semicontinuous in $(x,p)$ with respect to the product topology on $\mathbb{R}^{d} \times \mathcal{P}_{2}(\mathbb{R}^{d})$, where the topology on $\mathbb{R}^{d}$ is generated by the Euclidean distance and the topology on $\mathcal{P}_{2}(\mathbb{R}^{d})$ is generated by the $2$-Wasserstein distance (see Definition~6.8 and Theorem~6.9 in \cite{Villani-2009}).  Then, using \cite[Theorem~2.9]{MR4029731}, we deduce that $H$ is lower semicontinuous with respect to the weak topology on $(\mathcal{P}_{2}(\mathbb{R}^{d}))^{2}$ in duality with $(\Phi_{2}(\mathbb{R}^{d}))^{2}$ and prove that  \eqref{thedefofH} admits a solution whenever $H(\mu, \nu)<+\infty$. Applying \cite[Theorem~3.1]{MR4029731}, we obtain the dual formulation \eqref{1.29}, which implies the convexity  of $H$. This completes our proof of Proposition~\ref{proposition 1.17}.
\end{proof}

To develop the theory of subadditive cost functionals, which appeared earlier in \cite[Section~6]{Alibert-Bouchitte-Champion-2019}, where the role of $\mathbb{R}^{d}$ is replaced by the closure of a bounded open convex subset of $\mathbb{R}^{d}$, we need to introduce some additional assumptions on $f$, namely either the coercivity (see the example~\ref{thirdexample}), or the growth assumption (see the example~\ref{notcoerc}), or both (see the examples~\ref{tracefirstexample},~\ref{levsecondexample}). In particular, we introduce the following assumptions.
\begin{align}
& \,\ \,\ \text{$f$ is coercive}. \tag{$A3$} \label{assumpcoercivef}\\
& \,\ \,\  \dom(f)=S^{+}_{d} \,\ \text{and there exists}\,\ \kappa_{1}>0 \,\ \text{such that}\,\  f(A)\leq \kappa_{1} \tr(A) \,\ \text{for all} \,\ A \in S^{+}_{d}.  \tag{$A4$} \label{assumpgrowthf}
\end{align}
\begin{rem}\label{equiv rem coer}
Since $f$ satisfies \eqref{assumpsublinearf} and \eqref{assumplscf}, the assumption \eqref{assumpcoercivef} holds if and only if there exists a constant $\kappa_{0}>0$ such that $f(A)\geq \kappa_{0} \tr(A)$ for each $A \in S^{+}_{d}$. Indeed, assume that \eqref{assumpcoercivef} holds. If $A\in S^{+}_{d}$ and $A \not =0 $, then, in view of \eqref{assumpsublinearf}, $f(A)=|A|f(\frac{A}{|A|})$. Using this, \eqref{assumpdom} and \eqref{assumplscf}, we deduce that there exists $\widetilde{E}\in S^{+}_{d}$ such that $|\widetilde{E}|=1$ and $f(\widetilde{E})=\min\{f(E) \st E\in S^{+}_{d},\; |E|=1 \}<+\infty$. Then, defining $\kappa_{0}=f(\widetilde{E})/\sqrt{d}$, we have $f(A) \geq f(\widetilde{E})|A| \geq  \kappa_{0} \tr(A)$ for each $A \in S^{+}_{d}$. Clearly,  the last inequality implies the coercivity of $f$.
\end{rem}

Next, we prove that $G$ is \textit{narrowly} lower semicontinuous if \eqref{assumpcoercivef} holds. Recall that $(p_{n})_{n\in \mathbb{N}}\subset \mathcal{P}(\mathbb{R}^{d})$ narrowly converges to $p \in \mathcal{P}(\mathbb{R}^{d})$ if $\int_{\mathbb{R}^{d}}\varphi \diff p_{n} \to \int_{\mathbb{R}^{d}} \varphi \diff p$ as $n\to +\infty$ for each $\varphi\in C_{b}(\mathbb{R}^{d})$. 
\begin{prop}\label{Fisnlsc}
Let \eqref{assumpcoercivef} hold,  $x_{n} \to  x \in \mathbb{R}^{d}$, $(p_{n})_{n\in \mathbb{N}} \subset \mathcal{P}_{2}(\mathbb{R}^{d})$ and $p_{n}$ narrowly converges to $p\in \mathcal{P}_{2}(\mathbb{R}^{d})$. Then 
\begin{equation*}\label{Gislscwrtnarrow}
G(x,p) \leq \liminf_{n\to +\infty} G(x_{n}, p_{n}).
\end{equation*}
\end{prop}
\begin{proof}
Without loss of generality, there exists a constant $C>0$ (independent of $n$) such that for each $n\in\mathbb{N}$, $G(x_{n}, p_{n}) \leq C$. Then, by Theorem~\ref{prop: 1.13}, the infimum in \eqref{1.10}, where $\mu=\delta_{x_{n}}$ and $\nu=p_{n}$, is actually a minimum. Thus, for each $n \in \mathbb{N}$, there exists $\lambda_{n} \in \mathcal{M}(\mathbb{R}^{d}, S^{+}_{d})$ such that $|\lambda_{n}|(\mathbb{R}^{d})<+\infty$, $\tr(\frac{1}{2} \nabla^{2} \lambda_{n})=p_{n}-\delta_{x_{n}}$ and $G(x_{n}, p_{n})=\int_{\mathbb{R}^{d}}f(\frac{\diff \lambda_{n}}{\diff| \lambda_{n}|}) \diff |\lambda_{n}|$. Using this and \eqref{assumpcoercivef}, we obtain
\begin{equation*}
|\lambda_{n}|(\mathbb{R}^{d})\leq \eta_{0}\int_{\mathbb{R}^{d}}f\left(\frac{\diff \lambda_{n}}{\diff |\lambda_{n}|}\right) \diff |\lambda_{n}| \leq \eta_{0}C
\end{equation*}
for some constant $\eta_{0}>0$ independent of $n$ (see Remark~\ref{equiv rem coer}). Then, according to the Banach-Alaoglu theorem, there exists $\lambda \in \mathcal{M}(\mathbb{R}^{d}, S^{+}_{d})$ such that $|\lambda|(\mathbb{R}^{d}) <+\infty$ and $\lambda_{n}$ converges weakly to $\lambda$. Since, for each $\varphi \in C^{2}_{c}(\mathbb{R}^{d})$,
\[
\int_{\mathbb{R}^{d}} \varphi \diff p_{n} -\varphi(x_{n})=\int_{\mathbb{R}^{d}} \frac{1}{2}\nabla^{2} \varphi :\diff \lambda_{n}, 
\]
letting $n$ tend to $+\infty$ and using the weak convergences, we deduce that
\begin{equation}\label{densityargument777}
\int_{\mathbb{R}^{d}} \varphi \diff p -\varphi(x)=\int_{\mathbb{R}^{d}} \frac{1}{2}\nabla^{2} \varphi :\diff \lambda.
\end{equation}
By direct adaptation of the density argument in \cite[Remark~2.3]{Trevisan-2016}, \eqref{densityargument777} implies that $\tr(\frac{1}{2}\nabla^{2}\lambda)=p-\delta_{x}$. Thus, $\lambda$ is a competitor for $G(x,p)=F(\delta_{x},p)$ (see \eqref{1.10}), which, in view of the lower semicontinuity of the function $\sigma \mapsto \int_{\mathbb{R}^{d}}f(\frac{\diff \sigma}{\diff |\sigma|})\diff |\sigma|$ on the subset of finite measures in $\mathcal{M}(\mathbb{R}^{d}, S^{+}_{d})$, yields the estimate
\[
G(x,p)\leq \int_{\mathbb{R}^{d}}f\left(\frac{\diff \lambda}{\diff |\lambda|}\right)\diff |\lambda| \leq \liminf_{n\to +\infty} \int_{\mathbb{R}^{d}}f\left(\frac{\diff \lambda_{n}}{\diff |\lambda_{n}|}\right)\diff |\lambda_{n}| = \liminf_{n\to +\infty} G(x_{n}, p_{n})
\]
and completes our proof of Proposition~\ref{Fisnlsc}.
\end{proof}
Let $\mathcal{S}_{bb,2}(\mathbb{R}^{d})$ be the set of all lower semicontinuous functions $\varphi:\mathbb{R}^{d}\to \mathbb{R}$ such that $\varphi$ is bounded from below and for some constant $C>0$, $|\varphi(x)|\leq C(1+|x|^{2})$ for each $x \in \mathbb{R}^{d}$.
\begin{prop}\label{prop lsc for G} Let \eqref{assumpcoercivef} hold and $\varphi\in \mathcal{S}_{bb,2}(\mathbb{R}^{d})$. Then the infimum for $\varphi$ in \eqref{G-transform ABC} is actually a minimum and $\varphi^{G} \in \mathcal{S}_{bb,2}(\mathbb{R}^{d})$. 
\end{prop}
\begin{proof}
Let $x \in \mathbb{R}^{d}$ and $(p_{n})_{n\in \mathbb{N}} \subset \mathcal{P}_{2}(\mathbb{R}^{d})$ be a minimizing sequence for $\varphi^{G}(x) \in \mathbb{R}$. Then there exists a constant $C>0$ (independent of $n$) such that for each $n\in \mathbb{N}$ large enough, $G(x,p_{n})\leq C$.  Using this, together with \eqref{assumpcoercivef} (see Remark~\ref{equiv rem coer}) and Corollary~\ref{cor: 1.15}, we deduce that 
\begin{equation*}
A:=\sup_{n\in \mathbb{N}} \int_{\mathbb{R}^{d}}|y|^{2}\diff p_{n}(y)<+\infty,
\end{equation*}
which, in view of \cite[Remark~5.1.5]{Ambrosio-Gigli-Savare-2008}, implies that $(p_{n})_{n\in \mathbb{N}}$ is tight. Then, by Prokhorov's theorem (see \cite[Theorem~5.1.3]{Ambrosio-Gigli-Savare-2008}), there exists a probability measure $p$ on $\mathbb{R}^{d}$ such that, up to a subsequence (not relabeled), $p_{n}$ converges narrowly to $p$. Thus, \[\int_{\mathbb{R}^{d}} |y|^{2} \diff p(y) \leq \liminf_{n\to +\infty} \int_{\mathbb{R}^{d}} |y|^{2}\diff p_{n}(y) \leq A\]and $p \in \mathcal{P}_{2}(\mathbb{R}^{d})$. By the narrow convergence (recall that $\varphi \in \mathcal{S}_{bb,2}(\mathbb{R}^{d})$) and Proposition~\ref{Fisnlsc}, 
\[
\int_{\mathbb{R}^{d}}\varphi \diff p + G(x,p) \leq \liminf_{n\to +\infty} \int_{\mathbb{R}^{d}} \varphi \diff p_{n} + G(x, p_{n}),
\]
which says that $p$ is a minimizer in the definition of $\varphi^{G}(x)$ (see \eqref{G-transform ABC}). 

Next, we prove the lower semicontinuity of $\varphi^{G}$. Let  $x_{n} \to x$,  $p_{n} \in \mathcal{P}_{2}(\mathbb{R}^{d})$ be a minimizer in the definition of $\varphi^{G}(x_{n})$ and $\liminf_{n\to+\infty} \varphi^{G}(x_{n})<+\infty$. Proceeding as before, we can assume that there exists $p \in \mathcal{P}_{2}(\mathbb{R}^{d})$ such that, up to a subsequence (not relabeled), $p_{n}$  converges narrowly to $p$. Using the narrow convergence, the fact that $\varphi \in \mathcal{S}_{bb,2}(\mathbb{R}^{d})$ and Proposition~\ref{Fisnlsc}, we obtain the following
\[
\varphi^{G}(x)\leq \int_{\mathbb{R}^{d}}\varphi \diff p + G(x,p) \leq \liminf_{n\to+\infty} \int_{\mathbb{R}^{d}}\varphi \diff p_{n} + G(x_{n}, p_{n})=\liminf_{n\to +\infty} \varphi^{G}(x_{n}),
\]
which proves the lower semicontinuity of $\varphi^{G}$ and completes our proof of Proposition~\ref{prop lsc for G}.
\end{proof}
Under the assumption \eqref{assumpcoercivef}, we can, on the one hand relax and, on the other hand, strengthen the dual constraint in \eqref{1.29} using bounded lower semicontinuous functions. 
\begin{prop}\label{prop relax H}
Let $\mu, \nu \in \mathcal{P}_{2}(\mathbb{R}^{d})$ and \eqref{assumpcoercivef} hold. Then
\begin{equation}\label{duality for H through Sb}
H(\mu, \nu)=\sup\left\{\int_{\mathbb{R}^{d}} \varphi^{G} \diff \mu -\int_{\mathbb{R}^{d}}\varphi \diff \nu \st \varphi \in \mathcal{S}_{b}(\mathbb{R}^{d}) \right\}.
\end{equation}
\end{prop}
\begin{proof}
Let $\varphi\in \Phi_{bb,2}(\mathbb{R}^{d})$ and  $\varphi_{n}=\min\{n, \varphi\}$ for each $n\in \mathbb{N}$. Then $\varphi_{n} \in C_{b}(\mathbb{R}^{d})$ and $\varphi_{n} \nearrow \varphi$ as $n \to +\infty$. According to Proposition~\ref{prop lsc for G}, for each $n \in \mathbb{N}$ and for each $x\in \mathbb{R}^{d}$, there exists $p_{n} \in \mathcal{P}_{2}(\mathbb{R}^{d})$ such that $\varphi^{G}_{n}(x)=\int_{\mathbb{R}^{d}} \varphi_{n} \diff p_{n} + G(x, p_{n})$. Since $\varphi^{G}_{n}(x)\leq \varphi(x)<+\infty$, arguing by the same way as in the proof of Proposition~\ref{prop lsc for G}, we deduce that there exists $p \in \mathcal{P}_{2}(\mathbb{R}^{d})$ such that, up to a subsequence (not relabeled), $p_{n}$ converges narrowly to $p$. For each $k \in \mathbb{N}$, using the weak convergence and Proposition~\ref{Fisnlsc}, we obtain 
\[
\int_{\mathbb{R}^{d}} \varphi_{k} \diff p +G(x,p)\leq \liminf_{n\to +\infty} \int_{\mathbb{R}^{d}}\varphi_{k} \diff p_{n} + G(x, p_{n}) \leq \liminf_{n\to +\infty} \int_{\mathbb{R}^{d}} \varphi_{n} \diff p_{n} + G(x, p_{n})=\liminf_{n \to +\infty} \varphi^{G}_{n}(x).
\]
Letting $k$ tend to $+\infty$, by the monotone convergence theorem, we have  
\[
\varphi^{G}(x) \leq \int_{\mathbb{R}^{d}} \varphi\diff p +G(x,p)=\lim_{k\to +\infty} \int_{\mathbb{R}^{d}} \varphi_{k} \diff p + G(x,p)\leq \liminf_{n\to +\infty} \varphi^{G}_{n}(x). 
\]
On the other hand, $\varphi_{n}^{G}(x)\leq \varphi^{G}(x)$ for each $n \in \mathbb{N}$ and hence $\varphi_{n}^{G}(x) \nearrow \varphi^{G}(x)$ as $ n\to +\infty$. Thus, by the monotone convergence theorem, $ \int_{\mathbb{R}^{d}} \varphi^{G}_{n} \diff \mu \to \int_{\mathbb{R}^{d}} \varphi^{G} \diff \mu$ and $ \int_{\mathbb{R}^{d}}\varphi_{n} \diff \nu \to \int_{\mathbb{R}^{d}} \varphi \diff \nu$ as $n\to +\infty$. This, together with \eqref{1.29}, implies that
\[
H(\mu, \nu)=\sup\left \{\int_{\mathbb{R}^{d}}\varphi^{G} \diff \mu - \int_{\mathbb{R}^{d}} \varphi\diff \nu \st \varphi \in C_{b}(\mathbb{R}^{d})\right\}.
\]
Since for each $\varphi \in \mathcal{S}_{b}(\mathbb{R}^{d})$, there exists a sequence $(\varphi_{n})_{n\in \mathbb{N}} \subset C_{b}(\mathbb{R}^{d})$ such that $\varphi_{n} \nearrow \varphi$ as $n\to +\infty$, repeating the above procedure, we complete our proof of Proposition~\ref{prop relax H}.
\end{proof}
If the $G$-transform $\varphi \mapsto \varphi^{G}$ is idempotent on $\mathcal{S}_{b}(\mathbb{R}^{d})$, the following dual formulation for $H(\mu, \nu)$ holds.
\begin{prop}\label{prop dual HI}
Let $\mu, \nu \in \mathcal{P}_{2}(\mathbb{R}^{d})$, \eqref{assumpcoercivef} hold and $\varphi^{GG}=\varphi^{G}$ for each $\varphi \in \mathcal{S}_{b}(\mathbb{R}^{d})$. Then
\begin{equation}\label{dualthroughG}
H(\mu, \nu)=\sup\{\langle \nu-\mu, \psi\rangle \st \psi \in \mathcal{U}_{b}(\mathbb{R}^{d}),\; -\psi = (-\psi)^{G}\}.
\end{equation}
\end{prop}
\begin{proof}
In view of Proposition~\ref{prop relax H}, $H(\mu, \nu)$ is greater than or equal to the supremum in \eqref{dualthroughG}. On the other hand, using the estimate $\varphi^{G} \leq \varphi$ in \eqref{duality for H through Sb}, we deduce the following
\begin{align*}
H(\mu, \nu) \leq \sup\{\langle \mu -\nu, \varphi^{G} \rangle \st \varphi \in \mathcal{S}_{b}(\mathbb{R}^{d})\}\leq \sup\{\langle \mu -\nu, \varphi \rangle \st \varphi \in \mathcal{S}_{b}(\mathbb{R}^{d}), \; \varphi=\varphi^{G}\},
\end{align*}
where the latter estimate comes from the assumption that $\varphi^{GG}=\varphi^{G}$ for each $\varphi \in\mathcal{S}_{b}(\mathbb{R}^{d})$, since in this case, taking into account Proposition~\ref{prop lsc for G}, we have $\{\varphi^{G} \st \varphi\in \mathcal{S}_{b}(\mathbb{R}^{d})\}\subset \{\varphi \st \varphi \in \mathcal{S}_{b}(\mathbb{R}^{d}),\; \varphi=\varphi^{G}\}$. Thus, the supremum in \eqref{dualthroughG} is greater than or equal to $H(\mu, \nu)$ and the dual formulation \eqref{dualthroughG} holds, which completes our proof of Proposition~\ref{prop dual HI}.
\end{proof}
The following proposition, which is a generalization of Proposition~6.4 in \cite{Alibert-Bouchitte-Champion-2019}, describes some situations in which the $G$-transform is idempotent on $\mathcal{S}_{b}(\mathbb{R}^{d})$, which in particular happens when $H$ is subadditive.
\begin{prop}\label{prop equiv IMS}
Let  \eqref{assumpcoercivef} hold. Then the following assertions are equivalent.
\begin{enumerate}[label=(\roman*)]
\item \label{Sb assertion I} For each choice of $\mu, \nu, p \in \mathcal{P}_{2}(\mathbb{R}^{d})$, $H(\mu, \nu)\leq H(\mu, p) + H(p, \nu)$.
\item \label{Sb assertion II} For each $\nu, p \in \mathcal{P}_{2}(\mathbb{R}^{d})$ and analytically measurable probability kernel $y \in \mathbb{R}^{d}\mapsto \gamma^{y}\in \mathcal{P}_{2}(\mathbb{R}^{d})$,
\begin{equation}
G(x,\nu) \leq G(x,p) + \int_{\mathbb{R}^{d}} G(y, \gamma^{y}) \diff p(y) \,\ \,\ \text{whenever}\,\ \nu=\int_{\mathbb{R}^{d}}\gamma^{y} \diff p(y).
\end{equation}
\item \label{Sb assertion III} For each $\varphi \in \mathcal{S}_{b}(\mathbb{R}^{d})$, $\varphi^{GG}=\varphi^{G}$.
\end{enumerate}
\end{prop}
\begin{proof}
The proof of the implication $\ref{Sb assertion I}\Rightarrow \ref{Sb assertion II}$ follows by choosing $\mu=\delta_{x}$, $\nu=\int_{\mathbb{R}^{d}}\gamma^{y}\diff p(y)$ and using the definition of $H(p,\nu)$ as the infimum (see \eqref{thedefofH}).  

Now we prove the implication $\ref{Sb assertion II}\Rightarrow \ref{Sb assertion III}$. For each $\varphi \in \mathcal{S}_{b}(\mathbb{R}^{d})$, in view of Proposition~\ref{prop lsc for G}, which applies in particular to every function lying in $\mathcal{S}_{b}(\mathbb{R}^{d}) \subset \mathcal{S}_{bb,2}(\mathbb{R}^{d})$, and inasmuch as $\varphi^{G}\leq \varphi$ (see \eqref{eq ineq49050t5ti0i45t}),  we have $\varphi^{G} \in \mathcal{S}_{b}(\mathbb{R}^{d})$. Repeating this observation for $ \varphi^{G} \in \mathcal{S}_{b}(\mathbb{R}^{d})$, one can see that $\varphi^{GG} \in \mathcal{S}_{b}(\mathbb{R}^{d})$  and $\varphi^{GG}\leq \varphi^{G}$. Thus, it is enough to prove the estimate
\begin{equation}\label{optimalitephiG}
\varphi^{G}(x)\leq \int_{\mathbb{R}^{d}}\varphi^{G}(y)\diff p(y) + G(x,p)
\end{equation}
for each $x\in \mathbb{R}^{d}$ and $p \in \mathcal{P}_{2}(\mathbb{R}^{d})$.
Using the lower semicontinuity of $G$ and the fact that $\varphi \in \mathcal{S}_{b}(\mathbb{R}^{d})$, we deduce that the function $(x,p)\mapsto \int_{\mathbb{R}^{d}}\varphi\diff p + G(x,p)$ is lower semicontinuous on $\mathbb{R}^{d}\times \mathcal{P}_{2}(\mathbb{R}^{d})$, where the topology on $\mathbb{R}^{d}$ is generated by the Euclidean distance and the topology on $\mathcal{P}_{2}(\mathbb{R}^{d})$ is generated by the 2-Wasserstein distance. Then, according to \cite[Proposition~7.50]{MR0511544}, for each $\varepsilon>0$ there exists an analytically measurable probability kernel $y \in \mathbb{R}^{d}\mapsto \gamma^{y} \in \mathcal{P}_{2}(\mathbb{R}^{d})$ such that
\begin{equation*}\label{mfor3fi034j0ifj0i}
\varphi^{G}(y)+\varepsilon\geq \int_{\mathbb{R}^{d}}\varphi(z) \diff \gamma^{y}(z) + G(y, \gamma^{y}).
\end{equation*}
Then defining $\nu(d z)= \int_{\mathbb{R}^{d}}\gamma^{y}(d z) \diff p(y)$, integrating both sides of the above inequality with respect to $p\in \mathcal{P}_{2}(\mathbb{R}^{d})$ and using $\ref{Sb assertion II}$, we obtain
\begin{align*}
G(x,p) + \int_{\mathbb{R}^{d}}\varphi^{G}(y)\diff p(y) +\varepsilon &\geq G(x,p)+\int_{\mathbb{R}^{d}} G(y, \gamma^{y}) \diff p(y) + \int_{\mathbb{R}^{d}}\int_{\mathbb{R}^{d}} \varphi(z)\diff \gamma^{y}(z)\diff p(y)\\
&\geq G(x,\nu) + \int_{\mathbb{R}^{d}}\varphi(z)\diff \nu(z)\\
&\geq \varphi^{G}(x),
\end{align*}
which yields \eqref{optimalitephiG} and completes our proof of the implication $\ref{Sb assertion II}\Rightarrow \ref{Sb assertion III}$.

The implication $\ref{Sb assertion III}\Rightarrow \ref{Sb assertion I}$ is a direct consequence of Proposition~\ref{prop dual HI}, since for each $\psi \in \mathcal{U}_{b}(\mathbb{R}^{d})$ such that $-\psi=(-\psi)^{G}$,
\[
\langle \nu-\mu, \psi\rangle\leq \langle p-\mu, \psi\rangle+ \langle\nu-p, \psi\rangle\leq H(\mu,p)+H(p,\nu).
\]
This completes our proof of Proposition~\ref{prop equiv IMS}.
\end{proof}

In view of the subadditivity of the function $f$ (see \eqref{assumpsublinearf}) and under the assumption \eqref{assumpcoercivef}, the functional $H$ is subadditive, and hence the $G$-transform is idempotent on $\mathcal{S}_{b}(\mathbb{R}^{d})$.
\begin{theorem} \label{cor: 1.20} Let $\mu, \nu \in \mathcal{P}_{2}(\mathbb{R}^{d})$ and \eqref{assumpcoercivef} hold. Then 	\begin{equation*}
	H(\mu,\nu)=\sup\{\langle \nu-\mu, \psi\rangle \st \psi \in \mathcal{U}_{b}(\mathbb{R}^{d}),\; -\psi=(-\psi)^{G}\}. \label{1.35}
	\end{equation*}
\end{theorem}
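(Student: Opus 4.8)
The plan is to derive \eqref{1.35} from the relaxed dual formulation of $H$ recorded just before Definition~\ref{idempotent}, namely
\[
H(\mu,\nu)=\sup\left\{\int_{X}\varphi^{G}\diff\mu-\int_{X}\varphi\diff\nu \, | \, \varphi\in\mathcal{S}_{b}(X)\right\},
\]
which itself comes from Proposition~\ref{proposition 1.17}~$(ii)$ together with the identity $G^{*}(x,\psi)=-\varphi^{G}(x)$ for $\varphi=-\psi$ (and \cite[Proposition~4.8]{ABC}). Since $\psi\mapsto-\psi$ is a bijection of $\mathcal{U}_{b}(X)$ onto $\mathcal{S}_{b}(X)$ that turns the constraint $(-\psi)^{G}=-\psi$ into $\varphi^{G}=\varphi$, and since $\langle\nu-\mu,\psi\rangle=\int_{X}\varphi\diff\mu-\int_{X}\varphi\diff\nu$ when $\varphi=-\psi$, the assertion \eqref{1.35} is equivalent to
\[
H(\mu,\nu)=\sup\left\{\int_{X}\varphi\diff\mu-\int_{X}\varphi\diff\nu \, | \, \varphi\in\mathcal{S}_{b}(X),\ \varphi^{G}=\varphi\right\}.
\]
I would prove this for $\mu,\nu\in\mathcal{M}_{+}(X)$ with $\mu(X)=\nu(X)$; for $(\mu,\nu)\notin\dom(H)$ one has $H(\mu,\nu)=+\infty$ and the supremum in \eqref{1.35} is $+\infty$ as well, as seen by testing constant competitors $\psi\equiv c$ (admissible because $G(x,\delta_{x})=0$ forces $(-c)^{G}=-c$) when $\mu(X)\ne\nu(X)$, the remaining degenerate cases being elementary.

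For the bound $H(\mu,\nu)\ge\sup\{\cdots\}$ in the last display: if $\varphi\in\mathcal{S}_{b}(X)$ satisfies $\varphi^{G}=\varphi$, then $\int_{X}\varphi\diff\mu-\int_{X}\varphi\diff\nu=\int_{X}\varphi^{G}\diff\mu-\int_{X}\varphi\diff\nu\le H(\mu,\nu)$ by the relaxed dual, and one passes to the supremum. For the reverse bound, fix $\varphi\in\mathcal{S}_{b}(X)$ and put $\phi:=\varphi^{G}$. First, $\phi$ is an admissible competitor: it is bounded, since $\inf_{X}\varphi\le\varphi^{G}\le\varphi\le\sup_{X}\varphi$ (use $G\ge0$ and the choice $p=\delta_{x}$ in \eqref{G-transform ABC}); it is lower semicontinuous, because the $G$-transform of a function in $\mathcal{S}_{b}(X)$ is lower semicontinuous on the compact set $X$ (this is the lower semicontinuity of the $G$-transform proved in \cite{ABC, Gozlan_Roberto_Samson_Tetali}; alternatively it follows from the joint lower semicontinuity of $(x,p)\mapsto\int_{X}\varphi\diff p+G(x,p)$ and the compactness of $\mathcal{P}(X)$); and it is a fixed point of the $G$-transform, since by Proposition~\ref{prop G subadditive} the functional $H$ is subadditive, hence $G$ is idempotent (Definition~\ref{idempotent}), so $\phi^{G}=\varphi^{GG}=\varphi^{G}=\phi$. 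Second, using $\varphi^{G}\le\varphi$ pointwise and $\nu\in\mathcal{M}_{+}(X)$,
\[
\int_{X}\phi\diff\mu-\int_{X}\phi\diff\nu=\int_{X}\varphi^{G}\diff\mu-\int_{X}\varphi^{G}\diff\nu\ \ge\ \int_{X}\varphi^{G}\diff\mu-\int_{X}\varphi\diff\nu .
\]
Taking the supremum over $\varphi\in\mathcal{S}_{b}(X)$ and invoking the relaxed dual once more gives $H(\mu,\nu)\le\sup\{\cdots\}$, and the two bounds give the equality.

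The argument is short because its substance is imported: the Fenchel dual representation of $H$ (Proposition~\ref{proposition 1.17}) and the lower semicontinuity of the $G$-transform come from \cite{ABC, Gozlan_Roberto_Samson_Tetali}, while the decisive fact that $\varphi^{G}$ is again a competitor — that is, that $G$ is idempotent — is precisely the subadditivity of $H$ established in Proposition~\ref{prop G subadditive}, which in turn relies on the subadditivity of $F$ (Proposition~\ref{prop: 1.16}~$(ii)$) through the dual comparison of Theorem~\ref{prop: 1.13}. Accordingly, the only genuine verifications here are that $\varphi^{G}\in\mathcal{S}_{b}(X)$ and $(\varphi^{G})^{G}=\varphi^{G}$, so that $-\varphi^{G}$ is admissible in \eqref{1.35}; everything else is bookkeeping with signs and the monotonicity $\varphi^{G}\le\varphi$ against the positive measure $\nu$.
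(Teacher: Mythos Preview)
Your proposal is correct and follows the same route as the paper: both deduce from the subadditivity of $H$ (Proposition~\ref{prop G subadditive}) that $G$ is idempotent, and then combine idempotence with the relaxed dual formulation to obtain \eqref{1.35}. The only difference is that the paper outsources the last step to \cite[Theorem~6.3]{ABC}, whereas you reproduce that argument explicitly (showing $\varphi^{G}\in\mathcal{S}_{b}(X)$ with $(\varphi^{G})^{G}=\varphi^{G}$ and using $\varphi^{G}\le\varphi$ against $\nu\ge 0$).
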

\begin{proof}
The proof follows from  Proposition~\ref{prop dual HI} as soon as $\varphi^{GG}=\varphi^{G}$ for each $\varphi \in \mathcal{S}_{b}(\mathbb{R}^{d})$. This, in view of  Proposition~\ref{prop equiv IMS}, holds  if for each $p \in \mathcal{P}_{2}(\mathbb{R}^{d})$, $\nu\in \mathcal{P}_{2}(\mathbb{R}^{d})$ and analytically measurable probability kernel $y \in \mathbb{R}^{d}\mapsto \gamma^{y} \in \mathcal{P}_{2}(\mathbb{R}^{d})$, 
\begin{equation}
G(x, \nu)\leq G(x,p)+\int_{\mathbb{R}^{d}}G(y,\gamma^{y})\diff p(y) \,\ \,\ \text{whenever} \,\ \nu=\int_{\mathbb{R}^{d}}\gamma^{y}\diff p(y). \label{1.31}
\end{equation}
Using the definition of $H$ (see \eqref{thedefofH}) and Proposition~\ref{prop lower bound}, we have
\begin{equation}\label{estdefH>=F}
\int_{\mathbb{R}^{d}}G(y,\gamma^{y}) \diff p(y) \geq H(p, \nu)\geq  F(p, \nu).
\end{equation}
Combining Proposition~\ref{prop: 1.16}~$\ref{item 2 lcs}$ and \eqref{estdefH>=F}, yields \eqref{1.31}, namely
\[
G(x,\nu)\leq G(x,p)+F(p,\nu)\leq G(x,p)+\int_{\mathbb{R}^{d}}G(y,\gamma^{y})\diff p(y)
\]
(see \eqref{1.16}). This completes our proof of Theorem~\ref{cor: 1.20}.
\end{proof}
\begin{rem}\label{rem 1.22}
	Let $E\subset \mathbb{R}^{d}$. Then in view of \eqref{1.16} and \eqref{G-transform ABC}, 
	\begin{equation*}
	\begin{split}
	&\;\ \;\ \;\ (-\psi)^{G}(x)= -\psi(x)\,\ \,\ \forall x \in E\\ &\Leftrightarrow \int_{\mathbb{R}^{d}}-\psi\diff p+G(x,p)\geq -\psi(x)\,\, \,\ \forall (x,p)\in E\times \mathcal{P}_{2}(\mathbb{R}^{d})\\
	&\Leftrightarrow\int_{\mathbb{R}^{d}}-\psi\diff p +\int_{\mathbb{R}^{d}}f\left(\frac{\diff\lambda}{\diff|\lambda|}\right)\diff|\lambda|\geq -\psi(x) \\ & \qquad\qquad\qquad \,\ \forall (x,p,\lambda) \in E\times \mathcal{P}_{2}(\mathbb{R}^{d})\times \mathcal{M}(\mathbb{R}^{d}, S^{+}_{d}),\,\  \tr\left(\frac{1}{2}\nabla^{2}\lambda\right)=p-\delta_{x}. 
	\end{split}
	\end{equation*}
\end{rem}
Assuming \eqref{assumpgrowthf} instead of \eqref{assumpcoercivef}, we also obtain the dual formulation for $H$, where the dual competitors are invariant under the $G$-transform but belong to $\Phi_{bb,2}(\mathbb{R}^{d})$ (see Theorem~\ref{thm dual for H through G-transform A4}). We first prove the following key result.
\begin{prop} \label{prop est by variance}
	Let \eqref{assumpgrowthf} hold. Then for each $p \in \mathcal{P}_{2}(\mathbb{R}^{d})$, 
\begin{equation}\label{est var <=ktr}
G([p],p)\leq \kappa_{1}\var(p).
\end{equation}
Furthermore, if $f=\tr$ on $S^{+}_{d}$, then for each $p \in \mathcal{P}_{2}(\mathbb{R}^{d})$, 
\begin{equation}\label{est var =ktr}
G([p],p)=\var(p).
\end{equation}
\end{prop}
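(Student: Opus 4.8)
The plan is to prove \eqref{est var <=ktr} by exhibiting an explicit competitor for $F(\delta_{[p]},p)$ in the time-eliminated formulation of Proposition~\ref{prop: 1.7}, and then to obtain \eqref{est var =ktr} by pairing that upper bound (with $k=1$) against the lower bound of Corollary~\ref{cor: 1.15}. Write $b=[p]$; note $b\in X$ by convexity of $X$, so that $G([p],p)=F(\delta_{b},p)$ is well defined, and recall that by Proposition~\ref{prop: 1.7} it suffices to produce $\lambda\in\mathcal{M}(\mathbb{R}^{d},S^{+}_{d})$ with $\supp(\lambda)\subset X$, $\tr(\tfrac12\nabla^{2}\lambda)=p-\delta_{b}$ in the sense of \eqref{1.9}, and small cost $\int_{X}f(\tfrac{\diff\lambda}{\diff|\lambda|})\diff|\lambda|$.

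For the competitor I would superpose, over the segments $s\mapsto T(s,y):=(1-s)b+sy$ emanating from the barycenter, the rank-one directions $(y-b)\otimes(y-b)$ with weight $(1-s)$: concretely, set $\lambda:=2\,T_{\#}\bigl((1-s)\,(y-b)\otimes(y-b)\,\diff s\otimes\diff p(y)\bigr)$. By convexity of $X$ this measure is supported in $X$; it is $S^{+}_{d}$-valued because $(1-s)\ge0$ and $(y-b)\otimes(y-b)\in S^{+}_{d}$; and $|\lambda|(\mathbb{R}^{d})\le\int_{X}|y-b|^{2}\diff p(y)<+\infty$. The Fokker--Planck constraint is then checked by testing against $\varphi\in C^{2}_{c}(\mathbb{R}^{d})$: the second-order Taylor formula with integral remainder gives $\varphi(y)-\varphi(b)-\nabla\varphi(b)\cdot(y-b)=\int_{0}^{1}(1-s)\,\nabla^{2}\varphi(T(s,y)):(y-b)\otimes(y-b)\diff s$ for every $y$, and integrating against $p$ while using $\int_{X}(y-b)\diff p(y)=0$ turns the left-hand side into $\int_{X}\varphi\diff p-\varphi(b)$ and the right-hand side into $\int\tfrac12\nabla^{2}\varphi:\diff\lambda$, which is exactly \eqref{1.9} with $\mu=\delta_{b}$, $\nu=p$. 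Hence $\lambda$ is admissible.

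To estimate the cost I would invoke $(A3)$: since $\dom(f)=S^{+}_{d}$ and $f\le k\tr$ on $S^{+}_{d}$, and since $\tfrac{\diff\lambda}{\diff|\lambda|}\in S^{+}_{d}$ for $|\lambda|$-a.e.\ point, linearity of $\tr$ yields $\int_{X}f(\tfrac{\diff\lambda}{\diff|\lambda|})\diff|\lambda|\le k\int_{X}\tr(\tfrac{\diff\lambda}{\diff|\lambda|})\diff|\lambda|=k\,\tr(\lambda(\mathbb{R}^{d}))$. A direct computation from the definition gives $\lambda(\mathbb{R}^{d})=\int_{X}(y-b)\otimes(y-b)\diff p(y)$, whose trace is $\int_{X}|y-b|^{2}\diff p(y)=\var(p)$; together with Proposition~\ref{prop: 1.7} this gives $G([p],p)=F(\delta_{b},p)\le k\var(p)$, i.e.\ \eqref{est var <=ktr}. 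When $f=\tr$ this reads $G([p],p)\le\var(p)$, and the matching lower bound is Corollary~\ref{cor: 1.15} applied with $\mu=\delta_{b}$, $\nu=p$: $F(\delta_{b},p)\ge f\bigl(\int_{X}x\otimes x(\diff p(x)-\diff\delta_{b}(x))\bigr)=\tr\bigl(\int_{X}x\otimes x\diff p(x)-b\otimes b\bigr)=\int_{X}|x|^{2}\diff p(x)-|b|^{2}=\var(p)$, which combined with the upper bound gives \eqref{est var =ktr}.

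The only genuinely non-routine step is the design of the competitor: the weight $(1-s)$ along the segments from the barycenter is precisely the one dictated by the Taylor remainder, and it is what makes \emph{both} the Fokker--Planck constraint \eqref{1.9} and the trace computation collapse to $\var(p)$. Everything else — measure-theoretic well-posedness of the pushforward, the a.e.\ $S^{+}_{d}$-valuedness of $\tfrac{\diff\lambda}{\diff|\lambda|}$, and the bookkeeping with $\tr$ — is elementary. Note that the hypothesis $\dom(f)=S^{+}_{d}$ in $(A3)$ is essential here, since the constructed $\lambda$ may have density pointing in arbitrary directions of $S^{+}_{d}$.
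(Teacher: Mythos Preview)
Your proof is correct but follows a different route from the paper. The paper argues in the reverse order: it first establishes the equality \eqref{est var =ktr} for $f=\tr$ via the dual formulation (Theorem~\ref{prop: 1.13}), rewriting the constraint $\frac{1}{2}\nabla^{2}\psi\in\dom(f^{*})$ as $I_{d}-\frac{1}{2}\nabla^{2}\psi\ge 0$, shifting by $|\cdot|^{2}$, and using Jensen's inequality $\langle p-\delta_{[p]},u\rangle\ge 0$ for convex $u$ to kill the remaining supremum; the inequality \eqref{est var <=ktr} then follows from $f\le k\tr$ on $S^{+}_{d}$ and the fact that the trace infimum equals $\var(p)$. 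You instead work on the primal side: your explicit competitor $\lambda$---the pushforward along segments from $[p]$ of the rank-one directions weighted by the Taylor remainder kernel $(1-s)$---is a clean construction that makes both the constraint \eqref{1.9} and the cost bound transparent, and it yields \eqref{est var <=ktr} directly; the equality then comes from Corollary~\ref{cor: 1.15}. Your approach is more constructive and produces a concrete admissible $\lambda$ (potentially useful elsewhere), while the paper's dual argument is shorter given the machinery already in place. Both rely on duality at some point---yours through Corollary~\ref{cor: 1.15}, the paper's through Theorem~\ref{prop: 1.13}---so neither is strictly more elementary.
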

\begin{proof} In view of Proposition~\ref{prop: 1.7} and \eqref{assumpgrowthf}, 
	\[
	G([p],p)\leq \kappa_{1}\inf\left\{\int_{\mathbb{R}^{d}}\tr\left(\frac{\diff \lambda}{\diff |\lambda|}\right) \diff |\lambda| \st \tr\left(\frac{1}{2} \nabla^{2}\lambda\right)=p- \delta_{[p]}\right\}.
	\]
	Thus, it suffices to prove \eqref{est var =ktr}. Assume that $f=\tr$ on $S^{+}_{d}$. 
	By Jensen's inequality, for each concave function $u:\mathbb{R}^{d}\to\mathbb{R}$,
	\begin{equation}\label{Jensen's bar ineq}
	\langle p-\delta_{[p]}, u\rangle \leq 0.
	\end{equation}
	Since for each $A\in S_{d}$, $-\tr^{*}(A)=0 \Leftrightarrow A-I_{d} \leq 0$ (see \eqref{symAtr}), according to Theorem~\ref{prop: 1.13},
	\begin{align*}
	G([p],p)&= \sup\left\{\langle p-\delta_{[p]}, \psi\rangle \st \psi \in C^{2}_{b}(\mathbb{R}^{d}),\; \left(\frac{1}{2}\nabla^{2}\psi-I_{d}\right)\leq 0 \,\ \text{on}\,\ \mathbb{R}^{d} \right\}\\
	& =\left \langle p-\delta_{[p]}, |\cdot|^{2} \right\rangle + \sup \left\{\langle p-\delta_{[p]},u\rangle \st u \in C^{2}_{b}(\mathbb{R}^{d}),\; u \,\ \text{is concave on} \,\ \mathbb{R}^{d}\right\}\\
	&= \var(p),
	\end{align*}
	where  the last equality comes from (\ref{Jensen's bar ineq}). This completes our proof of Proposition~\ref{prop est by variance}.
\end{proof}
\begin{cor}\label{cor semiconvex 1.22}
	Let \eqref{assumpgrowthf} hold. Then for each  lower semianalytic function  $\varphi:\mathbb{R}^{d}\to \mathbb{R}$ bounded from below such that $\varphi=\varphi^{G}$, the function $\varphi(\cdot) + \kappa_{1} |\cdot|^{2}$ is convex and locally Lipschitz on $\mathbb{R}^{d}$. 
\end{cor}
\begin{proof}[Proof of Corollary~\ref{cor semiconvex 1.22}]
	Let $\varphi:\mathbb{R}^{d}\to \mathbb{R}$ be lower semianalytic, bounded from below and $\varphi=\varphi^{G}$. According to Remark~\ref{rem 1.22}, for each $p \in  \mathcal{P}_{2}(\mathbb{R}^{d})$, 
	\begin{align*}
	\varphi([p]) \leq \int_{\mathbb{R}^{d}}\varphi \diff p + G([p],p) \leq \int_{\mathbb{R}^{d}}\varphi \diff p + \kappa_{1} \left(\int_{\mathbb{R}^{d}}|y|^{2}\diff p(y) - |[p]|^{2}\right),
	\end{align*}
	where the latter estimate comes from the estimate \eqref{est var <=ktr} of Proposition~\ref{prop est by variance}. 
	Thus,
	\begin{equation}\label{J ineq}
	\varphi([p]) + \kappa_{1}|[p]|^{2}\leq \int_{\mathbb{R}^{d}}\left(\varphi(y)+\kappa_{1}|y|^{2}\right)\diff p(y).
	\end{equation}
	Since 
	\[
	\frac{x+y}{2}=\left[\frac{\delta_{x}}{2}+\frac{\delta_{y}}{2}\right] \,\ \,\ \forall x,y \in \mathbb{R}^{d}, 
	\]
	(\ref{J ineq}) yields
	\[
	\varphi\left(\frac{x+y}{2}\right) + \kappa_{1}\left|\frac{x+y}{2}\right|^{2}\leq \frac{\varphi(x)+\varphi(y)}{2}+\frac{\kappa_{1}(|x|^{2}+|y|^{2})}{2},
	\]
	 which, together with the local boundedness of $\varphi$, implies the convexity of $\varphi(\cdot)+\kappa_{1}|\cdot|^{2}$ on $\mathbb{R}^{d}$. For the fact that a convex function is locally Lipschitz on the interior of its proper domain, the reader may consult \cite[Theorem~10.4]{Rockafellar}. This completes our proof of Corollary~\ref{cor semiconvex 1.22}.
\end{proof}
\begin{prop}\label{prop dual for H through Phi}
Let $\mu, \nu \in \mathcal{P}_{2}(\mathbb{R}^{d})$, \eqref{assumpgrowthf} hold and $\varphi^{GG}=\varphi^{G}$ for each $\varphi\in \Phi_{bb,2}(\mathbb{R}^{d})$. Then 
\begin{equation}\label{dual for H through CBF}
H(\mu, \nu) = \sup\{\langle \nu-\mu, \psi \rangle \st -\psi \in \Phi_{bb,2}(\mathbb{R}^{d}), \; -\psi=(-\psi)^{G}\}.
\end{equation}
\end{prop}
\begin{proof}
By Proposition~\ref{proposition 1.17}~$\ref{dualformh}$, $H(\mu,\nu)$ is greater than or equal to the supremum in \eqref{dual for H through CBF}. In view of Remark~\ref{remark G->phi is welld} and Corollary~\ref{cor semiconvex 1.22}, for each $\varphi \in \Phi_{bb,2}(\mathbb{R}^{d})$, the functions $\varphi^{G}$ and $\varphi^{GG}$ are well defined and $\varphi^{G} \in \Phi_{bb,2}(\mathbb{R}^{d})$  whenever $\varphi^{GG}=\varphi^{G}$. Then $\{\varphi^{G} \st \varphi \in \Phi_{bb,2}(\mathbb{R}^{d})\} \subset \{\varphi \st \varphi \in \Phi_{bb,2}(\mathbb{R}^{d}),\; \varphi=\varphi^{G}\}$, since $\varphi^{GG}=\varphi^{G}$ for each $\varphi \in \Phi_{bb,2}(\mathbb{R}^{d})$ by our assumption. Using this and the estimate $$\int_{\mathbb{R}^{d}}\varphi^{G}\diff \mu - \int_{\mathbb{R}^{d}}\varphi \diff \nu \leq \int_{\mathbb{R}^{d}}\varphi^{G}\diff \mu - \int_{\mathbb{R}^{d}}\varphi^{G} \diff \nu$$ for each $\varphi \in \Phi_{bb,2}(\mathbb{R}^{d})$ in \eqref{1.29}, we deduce that $H(\mu,\nu)$ is less than or equal to the supremum in \eqref{dual for H through CBF}. This completes our proof of Proposition~\ref{prop dual for H through Phi}.
\end{proof}
The next proposition is a counterpart of Proposition~\ref{prop equiv IMS}, where we replace the assumption \eqref{assumpcoercivef}  by \eqref{assumpgrowthf} and describe some situations in which the $G$-transform is idempotent on $\Phi_{bb,2}(\mathbb{R}^{d})$. 
\begin{prop}\label{prop equiv PhiMS}
Let \eqref{assumpgrowthf} hold. Then the following assertions are equivalent.
\begin{enumerate}[label=(\roman*)]
\item \label{Phib2 assertion I} For each choice of $\mu, \nu, p \in \mathcal{P}_{2}(\mathbb{R}^{d})$, $H(\mu, \nu)\leq H(\mu, p) + H(p, \nu)$.
\item \label{Phib2 assertion II} For each $\nu, p \in \mathcal{P}_{2}(\mathbb{R}^{d})$ and analytically measurable probability kernel $y \in \mathbb{R}^{d}\mapsto \gamma^{y}\in \mathcal{P}_{2}(\mathbb{R}^{d})$,
\begin{equation}
G(x,\nu) \leq G(x,p) + \int_{\mathbb{R}^{d}} G(y, \gamma^{y}) \diff p(y) \,\ \,\ \text{whenever}\,\ \nu=\int_{\mathbb{R}^{d}}\gamma^{y} \diff p(y).
\end{equation}
\item \label{Phib2 assertion III} For each $\varphi \in \Phi_{bb,2}(\mathbb{R}^{d})$, $\varphi^{GG}=\varphi^{G}$.
\end{enumerate}
\end{prop}
\begin{proof}
The proof follows by reproducing the arguments of the  proof of Proposition~\ref{prop equiv IMS} with minor modifications, in particular, using Proposition~\ref{prop dual for H through Phi} in the proof of the implication $\ref{Phib2 assertion III}\Rightarrow \ref{Phib2 assertion I}$.
\end{proof}

\begin{theorem}\label{thm dual for H through G-transform A4}
Let $\mu, \nu \in \mathcal{P}_{2}(\mathbb{R}^{d})$ and \eqref{assumpgrowthf} hold. Then
\[
H(\mu, \nu)=\sup\{\langle \nu-\mu, \psi \rangle \st -\psi \in \Phi_{bb,2}(\mathbb{R}^{d}),\; -\psi=(-\psi)^{G}\}.
\]
\end{theorem}
\begin{proof}
Proceeding in a similar manner to the proof of Theorem~\ref{cor: 1.20}, we deduce that the assertions $\ref{Phib2 assertion I}$-$\ref{Phib2 assertion III}$ of Proposition~\ref{prop equiv PhiMS} hold. Then, applying Proposition~\ref{prop dual for H through Phi}, we complete our proof of Theorem~\ref{thm dual for H through G-transform A4}.
\end{proof}
If  \eqref{assumpcoercivef} and \eqref{assumpgrowthf} hold simultaneously, we have the following dual formulation.
\begin{theorem}\label{thmdualHA1-A4}
Let $\mu, \nu \in \mathcal{P}_{2}(\mathbb{R}^{d})$ and \eqref{assumpcoercivef}, \eqref{assumpgrowthf} hold. Then
\begin{equation}\label{thmdualA3A4}
H(\mu, \nu)=\sup\{\langle \nu-\mu, \psi \rangle \st \psi \in C_{b}(\mathbb{R}^{d}), \; -\psi=(-\psi)^{G}\}.
\end{equation}
\end{theorem}
\begin{proof}
The proof follows from Theorem~\ref{cor: 1.20} and Corollary~\ref{cor semiconvex 1.22}.
\end{proof}
\subsection{Duality for $F$ in terms of invariant functions under $G$-transform}
The following theorem refines the dual formulation for $F(\mu, \nu)$ obtained in Theorem~\ref{prop: 1.13} in terms of invariant functions under $G$-transform $\varphi \mapsto \varphi^{G}$ (see \eqref{G-transform ABC}) in line with \cite{Alibert-Bouchitte-Champion-2019} and \cite{Gozlan_Roberto_Samson_Tetali}. 
\begin{theorem}  \label{prop: 1.36} For each $\mu, \, \nu \in \mathcal{P}_{2}(\mathbb{R}^{d})$, we have
	\begin{equation}\label{dualFG}
	F(\mu,\nu)=\sup\{\langle \nu - \mu, \psi \rangle \st \psi \in C^{2}_{b}(\mathbb{R}^{d}),\,\ -\psi= (-\psi)^{G}\}.
	\end{equation}
\end{theorem}
\begin{rem}\label{comment about dformulations for F and H}
The connection between the formulae \eqref{thmdualA3A4} and \eqref{dualFG} is striking, with the only difference being that the competitors for $F(\mu,\nu)$ are additionally twice continuously differentiable with respect to the competitors for $H(\mu, \nu)$. Moreover, Theorem~\ref{th F=G} indicates the equality of these formulae.
\end{rem}
The proof of Theorem~\ref{prop: 1.36} is a direct consequence of Theorem~\ref{prop: 1.13} and the next result.
\begin{prop} \label{lem: 1.21} Let $\psi \in C^{2}_{b}(\mathbb{R}^{d})$. Then $-f^{*}\left(\frac{1}{2}\nabla^{2}\psi\right)=0$ in $\mathbb{R}^{d}$ if and only if $-\psi=~(-\psi)^{G}$.\end{prop}
\begin{proof} According to Theorem~\ref{prop: 1.13}, for each  $x\in \mathbb{R}^{d}$, $p\in \mathcal{P}_{2}(\mathbb{R}^{d})$ and $\psi \in C^{2}_{b}(\mathbb{R}^{d})$ such that $-f^{*}(\frac{1}{2}\nabla^{2}\psi)=0$ in $\mathbb{R}^{d}$, 
	\begin{equation*}
	G(x,p)\geq \int_{\mathbb{R}^{d}}\psi \diff p - \psi(x)
	\end{equation*}
	and hence
	\begin{equation*}
	(-\psi)^{G}(x)=\inf\left\{\int_{\mathbb{R}^{d}}-\psi\diff p + G(x,p) \st p \in \mathcal{P}_{2}(\mathbb{R}^{d})\right\} \geq -\psi(x).
	\end{equation*}
	Thus, $-\psi=(-\psi)^{G}$ (see \eqref{eq ineq49050t5ti0i45t}).  
	
	Let us now assume that $\psi\in C^{2}_{b}(\mathbb{R}^{d})$ and $-\psi=(-\psi)^{G}$. Fix arbitrary $A=(a_{ij})_{i,j=1}^{d} \in S^{++}_{d}$ and $x_{0} \in \mathbb{R}^{d}$.  Let $g_{A}(x,y)$ be the Green function  of the elliptic operator $L=-\sum_{i,j=1}^{d}a_{ij}\partial _{ij}=-\textup{div}(A\nabla)$ on $B_{r}(x_{0})$, namely, for each $y \in B_{r}(x_{0})$,  $g_{A}(\cdot, y) \in W^{1,p}_{0}(B_{r}(x_{0}))$ whenever $p<d/(d-1)$ and 
	\begin{equation*}
		-A:\nabla^{2}g_{A}=\delta_{y} \,\ \,\ \text{in} \,\ \,\ \mathcal{D}^{\prime}(B_{r}(x_{0})),
	\end{equation*} 
	which means that
	\[
		-\int_{B_{r}(x_{0})}A:\nabla^{2}\varphi(x)g_{A}(x,y)\diff x = \varphi(y) \,\ \,\ \forall \varphi \in C^{2}_{c}(B_{r}(x_{0}))
	\]
	(see, for instance, \cite{Littman-Stampacchia-Weinberger-1963}). 	Since $A\in S^{++}_{d}$, $A=P\,\textup{diag}(\lambda_{1},\dotsc,\lambda_{d})\,P^{\mathrm{T}}$ for some orthogonal real $d\times d$ matrix $P$ and positive numbers $\lambda_{i}>0$. Also $A^{-1}\in S^{++}_{d}$ and there exists the unique matrix $B \in S^{++}_{d}$ such that $A=B^{-2}$, namely $B=P\,\textup{diag}(1/\sqrt{\lambda_{1}},\dotsc, 1/\sqrt{\lambda_{d}})\, P^{\mathrm{T}}$. Fix an arbitrary $v\in C^{2}_{b}(\mathbb{R}^{d})$ and define $u(\cdot)=v(B^{-1}\cdot)$ so that $u \in C^{2}_{b}(\mathbb{R}^{d})$. Then $A:\nabla^{2}v(x)=\Delta u (Bx)$ for each $x \in \mathbb{R}^{d}$. If $g$ is the Green function of the Laplace operator on  $BB_{r}(x_{0})=\{Bx \st  x \in B_{r}(x_{0})\}$, then $g_{A}(x,y)=\det(B) g(Bx, By)$. To lighten the notation, define $U=BB_{r}(x_{0})$. Next, using the Green representation formula, changing the variables and using that $A:\nabla^{2}v(x)=\Delta u (Bx)$ and $g_{A}(x,x_{0})=\det(B) g(Bx, Bx_{0})$,  we have 
	\begin{equation}\label{Green's A id}
	\begin{split}
	v(x_{0})&=u(Bx_{0})=-\int_{U}\Delta u(x)g(x, Bx_{0})\diff x - \int_{\partial U}u(x)\nabla g(x, Bx_{0})\cdot \nu_{\partial U}(x)\diff \mathcal{H}^{d-1}(x)\\
	&=-\det(B)\int_{B_{r}(x_{0})}\Delta u(Bx)g(Bx,Bx_{0})\diff x\\& \qquad  - \det(B)\int_{\partial B_{r}(x_{0})}u(Bx)\nabla g(Bx, Bx_{0})\cdot \nu_{\partial U}(Bx)|B^{-\mathrm{T}}\nu(x)|\diff \mathcal{H}^{d-1}(x)\\
	&=-\int_{B_{r}(x_{0})}A:\nabla^{2}v(x)g_{A}(x,x_{0})\diff x - \int_{\partial B_{r}(x_{0})}v(x)A\nabla g_{A}(x,x_{0})\cdot \nu(x)\diff \mathcal{H}^{d-1}(x),
	\end{split}
	\end{equation}
	where $\nu_{\partial U}$ and $\nu$ denote the outward pointing unit normal vector fields along $\partial U$ and $\partial B_{r}(x_{0})$, respectively. 
		
	By Hopf's lemma (see \cite{Hopff, Oleinik-1952}),  $-A\nabla g_{A}(x, x_{0})\cdot \nu(x)>0$ for each $x \in \partial B_{r}(x_{0})$. Then, using \eqref{Green's A id} with  $v=1$, we obtain $p=-A\nabla g_{A}(\cdot, x_{0})\cdot \nu(\cdot) \mathcal{H}^{d-1}\mres \partial B_{r}(x_{0})\in \mathcal{P}_{2}(\mathbb{R}^{d})$. 
	
	 Altogether, due to \eqref{Green's A id} and the fact that $v\in C^{2}_{b}(\mathbb{R}^{d})$ was arbitrarily chosen, we have 
	\begin{equation}\label{eq 1m43i9jt53jtj30jk0}
	\textup{tr}\left(\frac{1}{2}\nabla^{2}\lambda\right) = p - \delta_{x_{0}},
	\end{equation}
	where $ \lambda=2Ag_{A}(\cdot,x_{0}) \mathcal{L}^{d} \mres B_{r}(x_{0}) \in \mathcal{M}(\mathbb{R}^{d}, S^{+}_{d})$ and $|\lambda|(\mathbb{R}^{d})<+\infty$. Since $-\psi(x_{0})=(-\psi)^{G}(x_{0})$, according to Remark~\ref{rem 1.22}, 
		\[
	\int_{\mathbb{R}^{d}}f\left(\frac{\diff \lambda}{\diff |\lambda|}\right)\diff |\lambda| \geq \int_{\mathbb{R}^{d}}\psi \diff p - \psi(x_{0}).
	\]
	This, together with the positive  1-homogeneity of $f$ (see \eqref{assumpsublinearf}), \eqref{eq 1m43i9jt53jtj30jk0} and the fact that $\psi\in C^{2}_{b}(\mathbb{R}^{d})$, implies that
	\begin{equation}\label{=> dom f *}
	\int_{B_{r}(x_{0})}f(A)g_{A}(x,x_{0})\diff x \geq \int_{B_{r}(x_{0})}A:\frac{1}{2}\nabla^{2}\psi(x)g_{A}(x,x_{0})\diff x.
	\end{equation}
	Assume by contradiction that $A:\frac{1}{2}\nabla^{2}\psi(x_{0})-f(A)>0$. Then there exist $\varepsilon, r >0$ such that for each $x \in B_{r}(x_{0})$, $A:\frac{1}{2}\nabla^{2}\psi(x)-f(A)\geq \varepsilon$. But this contradicts (\ref{=> dom f *}), since $\int_{B_{r}(x_{0})}g_{A}(x,x_{0})\diff x>0$. Thus,  $A:\frac{1}{2}\nabla^{2}\psi(x_{0})-f(A) \leq 0$, which implies that $\frac{1}{2}\nabla^{2}\psi(x_{0})\in \dom(f^{*})$, because $A\in S^{++}_{d}$ was arbitrarily chosen and $S^{++}_{d}\cap \dom(f)$ is dense in $\dom(f)$ (see \eqref{assumpdom}). Since $x_{0} \in \mathbb{R}^{d}$ was arbitrary, $\frac{1}{2}\nabla^{2}\psi(\mathbb{R}^{d})\subset \dom(f^{*})$, which holds if and only if $-f^{*}\left(\frac{1}{2}\nabla^{2} \psi\right)=0$ in $\mathbb{R}^{d}$. This completes our proof of Proposition~\ref{lem: 1.21}.
\end{proof}
\section{$F$ versus $H$} \label{Section 4}
\subsection{$F=H$ under approximation assumptions}
Our first type of approximation assumption is related to the assumption \eqref{assumpcoercivef}.
\begin{theorem}\label{thmsmooth3}
Let $\mu, \nu \in \mathcal{P}_{2}(\mathbb{R}^{d})$ and \eqref{assumpcoercivef} hold. Assume that for each $\psi \in \mathcal{U}_{b}(\mathbb{R}^{d})$ such that $-\psi=(-\psi)^{G}$ there exists $(\psi_{n})_{n\in \mathbb{N}} \subset C^{2}_{b}(\mathbb{R}^{d})$ such that $-\psi_{n}=(-\psi_{n})^{G}$ for each $n\in \mathbb{N}$ and $\langle \nu-\mu, \psi_{n}\rangle \to \langle \nu-\mu, \psi\rangle$ as $n\to+\infty$. Then $F(\mu, \nu)=H(\mu,\nu)$.
\end{theorem}
\begin{proof}
By Proposition~\ref{prop lower bound}, $F(\mu, \nu)\leq H(\mu, \nu)$. Next, using the assumption of Theorem~\ref{thmsmooth3}, together with Theorem~\ref{cor: 1.20} and Theorem~\ref{prop: 1.36}, we have $H(\mu, \nu) \le F(\mu, \nu)$, which completes our proof of Theorem~\ref{thmsmooth3}.
\end{proof}
\noindent \textbf{Example}~\ref{thirdexample}.
For each $A\in S_{d}$, 
	\begin{equation}\label{f=tId}
	f(A)=\begin{cases}
	t \,\ &\text{if}\,\ A=tI_{d} \,\ \text{for some}\,\ t \geq 0,\\
	+\infty \,\ &\text{otherwise}.
	\end{cases}
	\end{equation}
	Clearly, $f$ satisfies \eqref{assumpdom}-\eqref{assumpcoercivef}. Then for each $A\in S_{d}$, 
	\begin{equation}\label{Tfenchel1}
	f^{*}(A)=\sup\{A:M-f(M) \st M\in \dom(f)\}
	=\begin{cases} 
	0 \,\ &\text{if}\,\ \tr(A)\leq 1,\\
	+\infty \,\ &\text{otherwise}.
	\end{cases}	
	\end{equation}
	Given $\mu, \nu \in \mathcal{P}_{2}(\mathbb{R}^{d})$, using Theorem~\ref{prop: 1.13} and \eqref{Tfenchel1}, we have
	\begin{align*}
	F(\mu,\nu)&=\sup\left\{\langle \nu - \mu, \psi \rangle \st \psi \in C^{2}_{b}(\mathbb{R}^{d}),\; \tr\left(\frac{1}{2}\nabla^{2}\psi(x)\right)\leq 1 \,\ \forall x \in \mathbb{R}^{d} \right\}\\
	&=\sup\left\{\langle \nu-\mu, \psi \rangle \st \psi \in C^{2}_{b}(\mathbb{R}^{d}),\; \Delta\left[\frac{1}{2}\psi(x) -\frac{|x|^{2}}{2d}\right] \leq 0 \,\ \forall x \in \mathbb{R}^{d}\right\}\\
	&=\left\langle \nu - \mu, \frac{1}{d} |\cdot|^{2} \right \rangle + \sup\{\langle \nu- \mu, \varphi\rangle \st \varphi \in C^{2}_{b}(\mathbb{R}^{d}), \; \Delta \varphi(x) \le 0 \,\ \forall x \in \mathbb{R}^{d}\}\\
	&=\begin{cases}\numberthis \label{F=H f=tid 1}
	 \displaystyle \frac{1}{d}\var(\nu)- \frac{1}{d}\var(\mu) \,\ &\text{if}\,\ \mu \leq_{sh}\nu, \\
	+\infty \,\ &\text{otherwise}
	\end{cases}\\
	&=\begin{cases}\numberthis \label{explftrace}
\displaystyle f\left(\int_{\mathbb{R}^{d}} x\otimes x \diff (\nu(x)- \mu(x))\right) &\text{if}\,\ \mu\leq_{sh}\nu,\\
+\infty \,\ &\text{otherwise},
\end{cases}
	\end{align*}
	where to obtain \eqref{F=H f=tid 1} we have used the fact that if $\mu\leq_{sh}\nu$ (see Definition~\ref{subharmonic order measures}), then  $[\mu]=[\nu]$ and hence $\left\langle \nu- \mu, \frac{1}{d}|\cdot|^{2}\right\rangle = \frac{1}{d}\var(\nu)-\frac{1}{d} \var(\mu)$. 
		To obtain \eqref{explftrace} we have used the following. Notice that a competitor $\lambda$ for (\ref{1.10}) such that $\int_{\mathbb{R}^{d}}f(\frac{\diff \lambda}{\diff|\lambda|})\diff|\lambda|<+\infty$ with $f$ defined by \eqref{f=tId} has the form $\lambda=u\,I_{d}\,m$, where $m$ is a nonnegative measure on $\mathbb{R}^{d}$, $u\geq 0$ $m$-a.e. on $\mathbb{R}^{d}$ and $u \in L^{1}(\mathbb{R}^{d}, \diff m)$. Furthermore, such a measure $\lambda$ solves $\tr\left(\frac{1}{2}\nabla^{2}\lambda\right) = \nu- \mu$ if and only if $\Delta(\frac{1}{2}u m)=\nu - \mu$ in the weak sense (see (\ref{1.9})). Thus, for each $i,j \in \{1,\dotsc,d\}$ such that $i\neq j$, we have
\begin{equation}\label{test x^2_i-x^2_j}
\int_{\mathbb{R}^{d}}(x^{2}_{i}-x^{2}_{j})\diff \nu(x) -\int_{\mathbb{R}^{d}}(x^{2}_{i}-x^{2}_{j})\diff \mu(x)=\int_{\mathbb{R}^{d}}\frac{1}{2}\Delta[x^{2}_{i}-x^{2}_{j}]u(x)\diff m(x)=0
\end{equation}
and
\begin{equation}\label{test x_ix_j}
\int_{\mathbb{R}^{d}}x_{i}x_{j}\diff \nu(x)-\int_{\mathbb{R}^{d}}x_{i}x_{j}\diff \mu(x)= \int_{\mathbb{R}^{d}}\frac{1}{2}\Delta[x_{i}x_{j}]u(x)\diff m(x)=0.
\end{equation}
If $\mu\le_{sh} \nu$, by \eqref{F=H f=tid 1}, $F(\mu,\nu)<+\infty$.  Then, (\ref{test x^2_i-x^2_j}) and (\ref{test x_ix_j}) imply that $\int_{\mathbb{R}^{d}} x\otimes x \diff (\nu(x)-\mu(x)) \in S^{+}_{d}$ is a diagonal matrix equal to $\int_{\mathbb{R}^{d}}\frac{1}{d}|x|^{2}\diff(\nu(x)-\mu(x))I_{d}$ and hence
\begin{equation*}
f\left(\int_{\mathbb{R}^{d}} x\otimes x \diff (\nu(x)-\mu(x))\right)=\int_{\mathbb{R}^{d}}\frac{1}{d}|x|^{2}\diff(\nu(x)-\mu(x))=\frac{1}{d}\var(\nu)-\frac{1}{d}\var(\mu).
\end{equation*}
Thus, for each $a \in \mathbb{R}^{d}$, for each $\xi \in \mathbb{R}^{d}$ and for each $A\in S_{d}$ such that $\tr(A)=0$, the function $$\psi(x)=a+ x \cdot \xi + \left(A+\frac{I_{d}}{d} \right):x\otimes x $$ is a dual optimizer for $F(\mu,\nu)$, which is understood in the setting of \eqref{Dual}, when $F(\mu, \nu)<+\infty$ and $f$ is defined by \eqref{f=tId}.

Notice that the equality $F(\mu,\nu)=H(\mu,\nu)$ when $f$ is defined by \eqref{f=tId}  can be proved based on the results from \cite{Ghoussoub-Kim-Lin}, as well as using Theorem~\ref{thmsmooth3}. Below we present both approaches allowing the reader to verify the assumptions of Theorem~\ref{thmsmooth3} in the context of the example~\ref{thirdexample}.
\begin{itemize} 
\item By \eqref{F=H f=tid 1}, if $F(\mu, \nu) <+\infty$, then $\mu$ and $\nu$ are in subharmonic order. It follows by Theorem~1.5 and Remark~1.7 from \cite{Ghoussoub-Kim-Lin} that in this case  there exists at least one transport plan $\gamma \in \Pi(\mu, \nu)$ such that $\gamma=\gamma^{x}\otimes \mu$ with $\delta_{x}\leq_{sh}\gamma^{x}$ for $\mu$-a.e. $x \in \mathbb{R}^{d}$. For this transport, one gets
\begin{align*}
\int_{\mathbb{R}^{d}}G(x,\gamma^{x}) \diff \mu(x) &= \int_{\mathbb{R}^{d}}F(\delta_{x}, \gamma^{x}) \diff \mu(x)\\
&=\int_{\mathbb{R}^{d}}\frac{1}{d}(\mathrm{var}(\gamma^{x})-\mathrm{var}(\delta_{x})) \diff \mu(x)=\frac{1}{d}\int_{\mathbb{R}^{d}}\mathrm{var}(\gamma^{x})\diff \mu(x)\\
&=\frac{1}{d}\int_{\mathbb{R}^{d}}\left(\int_{\mathbb{R}^{d}}|y|^{2}\diff \gamma^{x}(y)- \left|\int_{\mathbb{R}^{d}}y\diff \gamma^{x}(y)\right|^{2}\right) \diff \mu(x)\\
&=\frac{1}{d} \left(\int_{\mathbb{R}^{d}}|y|^{2} \diff \nu(y) - \int_{\mathbb{R}^{d}}|x|^{2} \diff \mu(x)\right) = F(\mu,\nu),
\end{align*}
where we have used the facts that $x=[\gamma^{x}]$ and $[\mu]=[\nu]$. The equality $F(\mu,\nu)=H(\mu,\nu)$ then directly follows from Proposition~\ref{prop lower bound}.
\item We check the assumptions of Theorem~\ref{thmsmooth3}. For each  $r>0$ and $y \in \mathbb{R}^{d}$, we define the measure
 $\lambda_{y,r}=2I_{d}\,g(\cdot, y) \mathcal{L}^{d}\mres B_{r}(y) \in \mathcal{M}(\mathbb{R}^{d}, S^{+}_{d})$, where $g$ is the Green function of the Laplacian on $B_{r}(y)$. Let $\psi \in \mathcal{U}_{b}(\mathbb{R}^{d})$ and $-\psi=(-\psi)^{G}$. Using the facts that \[\tr\left(\frac{1}{2}\nabla^{2}\lambda_{y,r}\right)=\mathcal{H}^{d-1}(\partial B_{r}(y))^{-1}\mathcal{H}^{d-1}\mres \partial B_{r}(y) - \delta_{y}\] and
\[
\int_{\mathbb{R}^{d}}f\left(\frac{\diff \lambda_{y,r}}{\diff |\lambda_{y,r}|}\right)\diff |\lambda_{y,r}|=\int_{B_{r}(y)}2g(x,y)\diff x =\frac{r^{2}}{d}, 
\]
according to Remark~\ref{rem 1.22}, we have
\[
\fint_{\partial B_{r}(y)}-\psi(x) \diff \mathcal{H}^{d-1}(x) + \frac{r^{2}}{d} \geq -\psi(y).
\]
This implies that
\begin{equation*}\label{203i94jfgitngt5h60j6m}
\fint_{\partial B_{r}(y)}\left(-\psi(x)+\frac{|x|^{2}}{d}\right)\diff \mathcal{H}^{d-1}(x) \geq -\psi(y) + \frac{|y|^{2}}{d}.
\end{equation*}
Then, by Definition~\ref{defofsubharmonic}, $\Psi(\cdot):=-\psi(\cdot)+\frac{1}{d}|\cdot|^{2} \in \mathcal{SH}(\mathbb{R}^{d})$.  Fix an arbitrary $\varepsilon>0$.  Defining for each $x \in \mathbb{R}^{d}$, 
\[
\Psi_{\varepsilon}(x)=\int_{\mathbb{R}^{d}}\Psi(y)\eta_{\varepsilon}(x-y)\diff y,
 \]
where $\eta_{\varepsilon}(\cdot)=\varepsilon^{-d}\eta(\cdot/\varepsilon)\in C^{\infty}_{c}(\mathbb{R}^{d})$ and $\eta$ is a standard mollifier as in Lemma~\ref{lem 1.36},
we observe that $\Psi_{\varepsilon} \in C^{2}_{b}(\mathbb{R}^{d})$ is subharmonic on $\mathbb{R}^{d}$ (the reader may consult the proof of Lemma~\ref{lem 1.36}). For each $p\in \mathcal{P}_{2}(\mathbb{R}^{d})$ and $y\in \mathbb{R}^{d}$ such that $\Delta(\frac{1}{2}u\, m)=p-\delta_{y}$ for some nonnegative measure $m$ on $\mathbb{R}^{d}$ and $u\in L^{1}(\mathbb{R}^{d}, \diff m)$ such that $u\geq 0$ $m$-a.e. on $\mathbb{R}^{d}$, it holds $\delta_{y}\le_{sh} p$. Thus,
\begin{equation}\label{eqnmfv04j05j4gj5gu4jg54jo}
\int_{\mathbb{R}^{d}}\Psi_{\varepsilon}(x)\diff p(x)\geq \Psi_{\varepsilon}(y),
\end{equation}
since $\Psi_{\varepsilon}\in C^{2}_{b}(\mathbb{R}^{d})\cap \mathcal{SH}(\mathbb{R}^{d})$ (see Definition~\ref{subharmonic order measures}). Next, observing that $\frac{1}{d} |\cdot|^{2}_{\varepsilon} \in C^{2}_{b}(\mathbb{R}^{d}) \cap \mathcal{SH}(\mathbb{R}^{d})$, where 
\[
\frac{|x|^{2}_{\varepsilon}}{d}=\int_{\mathbb{R}^{d}}\frac{|y|^{2}}{d}\eta_{\varepsilon}(x-y)\diff y \,\ \text{and}\,\ \Delta \frac{|x|^{2}_{\varepsilon}}{d}=2 \,\ \,\ \forall x \in \mathbb{R}^{d},
\]
if $\Delta(\frac{1}{2}u m)=p-\delta_{y}$, we have
\[
\int_{\mathbb{R}^{d}}u\diff m=\int_{\mathbb{R}^{d}}\frac{1}{2}\Delta \left(\frac{|x|^{2}_{\varepsilon}}{d}\right)u(x)\diff m(x)=\int_{\mathbb{R}^{d}}\frac{|x|^{2}_{\varepsilon}}{d}\diff p(x) - \frac{|y|^{2}_{\varepsilon}}{d}.
\]
Using \eqref{eqnmfv04j05j4gj5gu4jg54jo}, the above formula and the fact that 
\[
\int_{\mathbb{R}^{d}}u\diff m  = \int_{\mathbb{R}^{d}}f\left(\frac{\diff \lambda}{\diff |\lambda|}\right) \diff |\lambda|,
\]
where $\diff \lambda=uI_{d}\diff m$, we get
\[
\int_{\mathbb{R}^{d}}\left(\Psi_{\varepsilon}(x)-\frac{|x|^{2}_{\varepsilon}}{d}\right)\diff p(x)+\int_{\mathbb{R}^{d}}f\left(\frac{\diff \lambda}{\diff |\lambda|}\right)\diff |\lambda| \geq \Psi_{\varepsilon}(y)-\frac{|y|^{2}_{\varepsilon}}{d}.
\]
This, in view of Remark~\ref{rem 1.22}, implies that $\Psi_{\varepsilon} -\frac{1}{d} |\cdot|^{2}_{\varepsilon} =\left(\Psi_{\varepsilon}-\frac{1}{d} |\cdot|^{2}_{\varepsilon} \right)^{G}$ on $\mathbb{R}^{d}$. Fix now a sequence of sufficiently small positive numbers $(\varepsilon_{n})_{n\in \mathbb{N}}$ such that $\varepsilon_{n}\to 0+$ as $n\to +\infty$. Observe that 
\[
\Psi_{\varepsilon_{n}}(x)-\frac{|x|^{2}_{\varepsilon_{n}}}{d} \to -\psi(x) \,\ \,\ \forall x \in \mathbb{R}^{d}
\]
(here we use that the convolution of a subharmonic function converges to this function everywhere, in view of the monotonicity condition of subharmonic functions; see, for instance, \cite[Section~2.9]{Rado-1937}). Altogether, we have defined the approximation sequence $(\psi_{n})_{n\in \mathbb{N}}=(-\Psi_{\varepsilon_{n}}+\frac{1}{d}|\cdot|^{2}_{\varepsilon_{n}})_{n\in \mathbb{N}}$ for $\psi$ in the sense of Theorem~\ref{thmsmooth3}. Therefore, according to Theorem~\ref{thmsmooth3}, $F=H$ when $f$ is defined by \eqref{f=tId}.
\end{itemize}
\begin{rem}\label{rem existence of Brownian martingale}
Applying Theorem~\ref{thmsmooth3} when $f$ is defined by \eqref{f=tId} allows us to recover the result of \cite{Ghoussoub-Kim-Lin}, namely the fact that whenever $\mu$ and $\nu$ are in subharmonic order, then there exists at least one transport $\gamma \in \Pi(\mu,\nu)$ such that $\gamma=\gamma^{x}\otimes \mu$ with $\delta_{x}\leq_{sh} \gamma^{x}$ for $\mu$-a.e. $x\in \mathbb{R}^{d}$. 
\end{rem}
Our second type of approximation assumption is related to the assumption \eqref{assumpgrowthf}.
\begin{theorem}\label{thmsmooth4}
Let $\mu, \nu \in \mathcal{P}_{2}(\mathbb{R}^{d})$ and \eqref{assumpgrowthf} hold. Assume that for each $\varphi \in \Phi_{bb,2}(\mathbb{R}^{d})$ such that $\varphi=\varphi^{G}$ there exists a sequence $(\varphi_{n})_{n\in \mathbb{N}} \subset C^{2}_{b}(\mathbb{R}^{d})$ such that $\varphi_{n}=\varphi^{G}_{n}$ for each $n\in \mathbb{N}$ and $\langle \nu-\mu, \varphi_{n}\rangle \to \langle \nu-\mu, \varphi \rangle$ as $n \to +\infty$. Then  $F(\mu, \nu)=H(\mu,\nu)$.
\end{theorem}
\begin{proof}
By Proposition~\ref{prop lower bound}, $F(\mu, \nu)\leq H(\mu, \nu)$. Next, using the assumption of Theorem~\ref{thmsmooth4}, together with Theorem~\ref{thm dual for H through G-transform A4} and Theorem~\ref{prop: 1.36}, we have $H(\mu, \nu) \le F(\mu, \nu)$, which completes our proof of Theorem~\ref{thmsmooth4}.
\end{proof}
\noindent \textbf{Example}~\ref{notcoerc}. For some $B \in S^{+}_{d}$ and for each $A\in S_{d}$,
\begin{equation}\label{defoffinex4}
f(A)=\begin{cases}
A:B \,\ &\text{if} \,\ A \in S^{+}_{d},\\
+\infty \,\ &\text{otherwise}.
\end{cases}
\end{equation}
Then $f$ satisfies \eqref{assumpdom}-\eqref{assumplscf}, \eqref{assumpgrowthf} and for each $A \in S_{d}$,
\begin{equation}\label{calc f* 4}
f^{*}(A)= \sup\{(A-B):M \st M \in S^{+}_{d}\}=\begin{cases}
0 \,\ & \text{if} \,\ A-B \leq 0, \\
+\infty \,\ & \text{otherwise}.
\end{cases}
\end{equation}
Given $\mu, \nu \in \mathcal{P}_{2}(\mathbb{R}^{d})$, using Theorem~\ref{prop: 1.13} and \eqref{calc f* 4}, we obtain
\begin{align*}
F(\mu, \nu)&=\sup\left\{\langle \nu-\mu, \psi \rangle \st \psi \in C^{2}_{b}(\mathbb{R}^{d}),\; \frac{1}{2} \nabla^{2}\psi(x) -B \leq 0 \,\ \forall x \in \mathbb{R}^{d}\right\}\\
&=\int_{\mathbb{R}^{d}} B: x\otimes x \diff (\nu(x)- \mu(x)) + \sup \left\{\langle \nu -\mu, \varphi \rangle \st \varphi \in C^{2}_{b}(\mathbb{R}^{d}), \; \varphi \; \text{is concave on} \; \mathbb{R}^{d}\right\}\\
&=\begin{cases}\numberthis \label{calc F ex4}
\displaystyle \int_{\mathbb{R}^{d}}B:x\otimes x \diff (\nu(x)-\mu(x)) \,\ &\text{if}\,\ \mu\le_{c}\nu,\\
+\infty \,\ &\text{otherwise}
\end{cases}\\
&=\begin{cases}
\displaystyle f\left(\int_{\mathbb{R}^{d}} x\otimes x \diff (\nu(x)-\mu(x))\right) \,\ & \text{if} \,\ \mu\le_{c}\nu, \\
+\infty \,\ & \text{otherwise},
\end{cases}
\end{align*}
where to obtain \eqref{calc F ex4} we have used Remark~\ref{remarkaboutconvexorder}. Notice that for each $a \in \mathbb{R}$ and for each $\zeta \in \mathbb{R}^{d}$, the function
\[
\psi(x)=a+x\cdot \zeta + B:x\otimes x
\]
is a dual optimizer for $F(\mu,\nu)$, which is understood in the setting of \eqref{Dual}, when $F(\mu,\nu)<+\infty$ and $f$ is defined by \eqref{defoffinex4}.

The equality $F(\mu,\nu)=H(\mu,\nu)$ when $f$ is defined by \eqref{defoffinex4} can be proved based on the Strassen theorem, as well as using Theorem~\ref{thmsmooth4}. We present both approaches, which allows the reader to verify the assumptions of Theorem~\ref{thmsmooth4} in the context of the example~\ref{notcoerc}.
\begin{itemize}
\item By \eqref{calc F ex4}, if $F(\mu,\nu)<+\infty$, then $\mu\leq_{c}\nu$. It follows by the Strassen theorem (see \cite{Strassen-1965}) that there exists $\gamma \in \Pi(\mu,\nu)$ such that $\delta_{x} \leq_{c} \gamma^{x}$ for $\mu$-a.e. $x \in \mathbb{R}^{d}$ for which
\begin{align*}
\int_{\mathbb{R}^{d}} G(x, \gamma^{x}) \diff \mu(x) &= \int_{\mathbb{R}^{d}} F(\delta_{x}, \gamma^{x})\diff \mu(x) \\ & =\int_{\mathbb{R}^{d}}\left(\int_{\mathbb{R}^{d}} B: y\otimes y \diff \gamma^{x}(y) - B: x \otimes x \right) \diff \mu(x)  \\  & = F(\mu,\nu).
\end{align*}
The equality $F(\mu,\nu)=H(\mu,\nu)$ then directly follows from Proposition~\ref{prop lower bound}.
\item We check the assumptions of Theorem~\ref{thmsmooth4}. Let $\varphi \in \Phi_{2}(\mathbb{R}^{d})$. We claim that $\varphi=\varphi^{G}$ if and only if the function $\varphi_{B} \in \Phi_{2}(\mathbb{R}^{d})$ defined by $\varphi_{B}(x)=\varphi(x)+B:x\otimes x$ is convex on $\mathbb{R}^{d}$. Indeed, if $\varphi_{B}$ is convex,  using Jensen's inequality and \eqref{calc F ex4}, for each $p \in \mathcal{P}_{2}(\mathbb{R}^{d})$, we have 
\begin{equation}\label{estimbothsides7}
\varphi([p])\le \int_{\mathbb{R}^{d}}(\varphi(y)+B:y\otimes y) \diff p(y)  - B: [p] \otimes [p]=\int_{\mathbb{R}^{d}} \varphi \diff p + G([p],p),
\end{equation}
which, in view of Remark~\ref{rem 1.22} and \eqref{calc F ex4}, yields $\varphi^{G}=\varphi$ on $\mathbb{R}^{d}$. The same argument yields that if $\varphi=\varphi^{G}$ on $\mathbb{R}^{d}$, then for each $p \in \mathcal{P}_{2}(\mathbb{R}^{d})$, \eqref{estimbothsides7} holds 
and hence
\begin{equation}\label{jinex4}
\varphi_{B}([p])\le \int_{\mathbb{R}^{d}} \varphi_{B} \diff p.
\end{equation}
For each $x, y \in \mathbb{R}^{d}$, choosing $p=\frac{1}{2}(\delta_{x}+\delta_{y})$ in \eqref{jinex4}, we obtain 
\[
\varphi_{B}\left(\frac{x+y}{2}\right)\le \frac{\varphi_{B}(x)}{2}+\frac{\varphi_{B}(y)}{2},
\]
which, since $\varphi_{B}$ is continuous on $\mathbb{R}^{d}$, implies that $\varphi_{B}$ is convex on $\mathbb{R}^{d}$. This completes the proof of our claim. 

Let $\varphi \in \Phi_{bb,2}(\mathbb{R}^{d})$ satisfy $\varphi=\varphi^{G}$. Then $\varphi_{B}$ is convex on $\mathbb{R}^{d}$. Furthermore, there exists a sequence $(\psi_{k})_{k\in \mathbb{N}} \subset C^{2}_{b}(\mathbb{R}^{d})$ of convex functions such that $\int_{\mathbb{R}^{d}} \psi_{k} \diff \mu \to \int_{\mathbb{R}^{d}} \varphi_{B} \diff \mu$ and $\int_{\mathbb{R}^{d}} \psi_{k} \diff \nu \to \int_{\mathbb{R}^{d}} \varphi_{B} \diff \nu$ as $k \to +\infty$ (we refer to Remark~\ref{remarkaboutconvexorder}). For each $k \in \mathbb{N}$, define $\varphi_{k}(x)=\psi_{k}(x)-B:x\otimes x$ for each $x \in \mathbb{R}^{d}$. Since $\psi_{k} \in C^{2}_{b}(\mathbb{R}^{d})$ is convex, $\varphi_{k}=\varphi_{k}^{G} \in C^{2}_{b}(\mathbb{R}^{d})$. This defines the approximation sequence for $\varphi$ in the sense of Theorem~\ref{thmsmooth4}, since \[\int_{\mathbb{R}^{d}} \varphi_{k} \diff \mu \to \int_{\mathbb{R}^{d}} (\varphi_{B}(x) -B:x\otimes x)\diff \mu(x)= \int_{\mathbb{R}^{d}} \varphi \diff \mu\] and \[\int_{\mathbb{R}^{d}} \varphi_{k} \diff \nu \to \int_{\mathbb{R}^{d}}(\varphi_{B}(x)-B:x\otimes x)\diff \nu(x)=\int_{\mathbb{R}^{d}} \varphi \diff \nu\] as $k \to +\infty$. Therefore, according to Theorem~\ref{thmsmooth4}, $F=H$ when $f$ is defined by \eqref{defoffinex4}.
\end{itemize}
\begin{rem}\label{rem Strassen theorem}
Applying Theorem~\ref{thmsmooth4} when $f$ is defined by \eqref{defoffinex4} allows us to recover the Strassen theorem, namely the fact that whenever $\mu$ and $\nu$ are in convex order, there exists at least one transport $\gamma \in \Pi(\mu,\nu)$ such that $\gamma=\gamma^{x}\otimes \mu$ with $\delta_{x}\leq_{c} \gamma^{x}$ for $\mu$-a.e. $x\in \mathbb{R}^{d}$. The same observation holds in the context of Theorem~\ref{th F=G} when $f$ is defined by \eqref{exoffbytrace}.
\end{rem}
\subsection{Viscosity solutions}
In this subsection, under the assumption \eqref{assumpcoercivef}, we characterize the functions $\psi \in C_{b}(\mathbb{R}^{d})$ such that $-\psi=(-\psi)^{G}$ as viscosity solutions of the Hamilton-Jacobi-Bellman equation $-f^{*}(\frac{1}{2}\nabla^{2}u)=0$ in $\mathbb{R}^{d}$ (notice that $f^{*}$  is discontinuous and takes its values in $\{0, +\infty\}$, see \eqref{mf f usual}). 
\begin{rem} Let \eqref{assumpcoercivef} hold. Since $\dom(f)\cap S^{++}_{d}$ is dense in $\dom(f)\not = \emptyset$ (see \eqref{assumpdom}), for each $A\in S_{d}$, 
\begin{equation}\label{Tf1}
f^{*}(A)
=\sup_{t>0}t(\mathcal{T}(A)-1)
=\begin{cases}
0 \,\ &\text{if} \,\ \,\  \mathcal{T}(A)\leq 1,\\
+\infty \,\ &\text{otherwise},
\end{cases}
\end{equation}
where
\begin{equation}\label{Trace by f}
\mathcal{T}(A)=\sup\{A:E \st E \in S^{++}_{d},\; f(E)=1\}.
\end{equation}
\end{rem}
In view of Proposition~\ref{lem: 1.21} and \eqref{Tf1}, if $\psi \in C^{2}_{b}(\mathbb{R}^{d})$, then $-\psi=(-\psi)^{G}$ on $\mathbb{R}^{d}$ if and only if $1-\mathcal{T}\left(\frac{1}{2} \nabla^{2} \psi\right) \geq 0$ in $\mathbb{R}^{d}$. However, a function $\psi \in C_{b}(\mathbb{R}^{d})$ such that $-\psi=(-\psi)^{G}$ may not be regular enough to define $\nabla^{2} \psi$ in the classical sense. Using the theory of viscosity solutions, we can define $\nabla^{2} \psi$ in the viscosity sense. Taking into account Theorems~\ref{thmdualHA1-A4} and \ref{prop: 1.36}, to derive the equality between $F$ and $H$, we first show that each  $\psi \in C_{b}(\mathbb{R}^{d})$ such that $-\psi=(-\psi)^{G}$ is a viscosity supersolution of the equation $1-\mathcal{T}\left(\frac{1}{2}\nabla^{2} u\right)= 0$ in $\mathbb{R}^{d}$ (see Definition~\ref{def of subsolution}).

    Following \cite{Crandall-Ishii-Lions-1992}, we shall say that a function $\mathscr{F} :S_{d} \to \mathbb{R}$ is \textit{proper} (also called \textit{degenerate elliptic}) if 
    \begin{equation}\label{proper}
     \mathscr{F}(A_{2})\leq \mathscr{F}(A_{1}) \,\ \text{whenever}\,\ A_{1}\leq A_{2}.
    \end{equation}

\begin{rem}
For each $c\in \mathbb{R}$ and $A \in S^{++}_{d}$, the function $\mathscr{F}(B)=c-A:B$, $B\in S_{d}$ is proper. Furthermore, for each $B_{1},B_{2} \in S_{d}$ satisfying $B_{1}\leq B_{2}$, we have
\[
\sup\left\{\frac{1}{2}B_{2}:E \st E\in S^{++}_{d},\; f(E)=1\right\}\geq \sup\left\{\frac{1}{2} B_{1}:E \st E\in S^{++}_{d},\; f(E)=1\right\}
\]
and hence $c-\mathcal{T}(\frac{1}{2}B_{2})\leq c-\mathcal{T}(\frac{1}{2}B_{1})$, where $\mathcal{T}:S_{d}\to \mathbb{R}$ is defined in (\ref{Trace by f}).  Thus, $\mathscr{F}(\cdot)=c-\mathcal{T}(\frac{1}{2}\cdot)$ is proper for each $c\in\mathbb{R}$.
\end{rem}
For the reader's convenience, we recall (the reader may consult \cite{Crandall-Ishii-Lions-1992}) the next 
\begin{defn}\label{def of subsolution}
	Let $\mathscr{F}:S_{d}\to \mathbb{R}$ be proper (see \eqref{proper}). A lower semicontinuous function $u: \mathbb{R}^{d}\to \mathbb{R}$ is a viscosity supersolution of $\mathscr{F}=0$ (a viscosity solution of $\mathscr{F}\geq 0$) in $\mathbb{R}^{d}$ provided that if $\varphi \in C^{2}(\mathbb{R}^{d})$ and $x \in \mathbb{R}^{d}$ is a local minimum of $u-\varphi$, then 
$
	\mathscr{F}(\nabla^{2} \varphi(x))\geq 0.
$
\end{defn} 
The definitions of a viscosity subsolution and  a solution of $\mathscr{F}=0$ are likewise, we refer to \cite{Crandall-Ishii-Lions-1992}. 
Notice that the function $\mathscr{F}$ in \eqref{proper} and in Definition~\ref{def of subsolution} can be discontinuous. Even more, allowing $\mathscr{F}$ to become infinite (see, for instance, \cite[Example~1.11]{Crandall-Ishii-Lions-1992}), we observe that $\mathscr{F}=-f^{*}:S_{d} \to \{0, -\infty\}$ is proper. In particular, since $-f^{*}\leq 0$, it follows that any upper semicontinuous function $\psi$ is a viscosity subsolution of $-f^{*}(\frac{1}{2}\nabla^{2}u)=0$. 
\begin{prop}\label{prop visc supersol} Let $\psi \in C_{b}(\mathbb{R}^{d})$ satisfy $-\psi=(-\psi)^{G}$ on $\mathbb{R}^{d}$. Then $\psi$ is a viscosity supersolution of $1-\mathcal{T}\left(\frac{1}{2}\nabla^{2}u\right)=0$ in $\mathbb{R}^{d}$. 
\end{prop}
\begin{proof}
 Let $x_{0}\in \mathbb{R}^{d}$, $\varphi \in C^{2}(\mathbb{R}^{d})$ and $x_{0}$ be a local minimum of $\psi-\varphi$. Then there exists $r>0$ such that $\psi(x)-\psi(x_{0})\geq \varphi(x)-\varphi(x_{0})$ for each $x\in \overline{B}_{r}(x_{0})$. Fix an arbitrary $A=(a_{ij})_{i,j=1}^{d}\in S^{++}_{d}$ such that $f(A)=1$. Let $g_{A}$ be the Green function of the elliptic operator $L=-\sum_{i,j=1}^{d}a_{ij}\partial_{ij}$ on $B_{r}(x_{0})$. Then, defining $\lambda=2Ag_{A}(\cdot,x_{0}) \mathcal{L}^{d}\mres B_{r}(x_{0}) \in \mathcal{M}(\mathbb{R}^{d}, S^{+}_{d})$ and $p=- A\nabla g_{A}(\cdot,x_{0}) \cdot \nu(\cdot) \mathcal{H}^{d-1}\mres \partial B_{r}(x_{0}) \in \mathcal{P}_{2}(\mathbb{R}^{d})$, where $\nu(x)$ is the outward pointing unit normal to $\partial B_{r}(x_{0})$ at $x$, we know that 
\begin{equation}\label{lcomp1.48}
\textup{tr}\left(\frac{1}{2}\nabla^{2}\lambda\right)=p-\delta_{x_{0}}
\end{equation}
(see the proof of Proposition~\ref{lem: 1.21}).  Since $\varphi-\varphi(x_{0})\leq \psi-\psi(x_{0})$ on $\overline{B}_{r}(x_{0})$,
\begin{equation}\label{est 1.49}
\int_{\mathbb{R}^{d}}\varphi\diff p - \varphi(x_{0})\le \int_{\mathbb{R}^{d}}\psi\diff p -\psi(x_{0})\leq G(x_{0},p),
\end{equation}
where the last estimate comes from the fact that $-\psi=(-\psi)^{G}$ (see Remark~\ref{rem 1.22}). On the other hand, in view of (\ref{lcomp1.48}), $\lambda$ is a competitor in the definition of $G(x_{0},p)=F(\delta_{x_{0}}, p)$ (see Proposition~\ref{prop: 1.7}). Using this, \eqref{assumpsublinearf} and the fact that $f(A)=1$, we obtain 
\[
G(x_{0},p)\leq \int_{\mathbb{R}^{d}}f\left(\frac{\diff \lambda}{\diff|\lambda|}\right)\diff |\lambda| = 2\int_{B_{r}(x_{0})}f(A)g_{A}(x,x_{0})\diff x=2\int_{B_{r}(x_{0})}g_{A}(x,x_{0})\diff x.
\]
This, together with (\ref{est 1.49}) and the fact that we can actually use $\varphi$ as a test function for (\ref{lcomp1.48}) (since the supports of the measures $\lambda$, $p$ and $\delta_{x_{0}}$ are contained in $\overline{B}_{r}(x_{0})$ and we can multiply $\varphi$ by a cutoff function equal to 1 on $B_{2r}(x_{0})$), implies that
\begin{equation}\label{est 1.50}
\int_{B_{r}(x_{0})} A:\nabla^{2}\varphi(x) g_{A}(x,x_{0})\diff x \leq 2\int_{B_{r}(x_{0})}g_{A}(x,x_{0})\diff x.
\end{equation}
Then $A:\frac{1}{2}\nabla ^{2}\varphi(x_{0})\leq 1$, because otherwise we could choose $r>0$ small enough such that for some $\varepsilon>0$ and for each $x \in B_{r}(x_{0})$ we would have $A:\frac{1}{2}\nabla ^{2}\varphi(x)- 1\geq \varepsilon$, which would lead to a contradiction with (\ref{est 1.50}), since $\int_{B_{r}(x_{0})}g_{A}(x,x_{0})\diff x>0$. Thus, $1-A:\frac{1}{2}\nabla^{2}\varphi(x_{0})\geq 0$ for each $A\in S^{++}_{d}$ satisfying $f(A)=1$. Therefore,
\[
1-\mathcal{T}\left(\frac{1}{2}\nabla^{2}\varphi(x_{0})\right)=1-\sup\left\{A:\frac{1}{2}\nabla^{2}\varphi(x_{0}) \st A\in S^{++}_{d},\; f(A)=1\right\}\geq 0.
\]
This, according to Definition~\ref{def of subsolution}, completes our proof of Proposition~\ref{prop visc supersol}.
\end{proof}
Next, we perform a smoothing procedure by convolution with a mollifier for a viscosity solution of $1-\mathcal{T}\left(\frac{1}{2}\nabla^{2}u\right)\geq 0$ in $\mathbb{R}^{d}$ to obtain the classical solution.
\begin{rem}\label{rem 1.30}
The same argument as in the proof of Proposition~\ref{prop visc supersol} implies that for each $c\in\mathbb{R}$ and for each lower semicontinuous function $w: \mathbb{R}^{d}\to \mathbb{R}$,  $w$ is a viscosity supersolution of $c-\mathcal{T}\left(\frac{1}{2}\nabla^{2}u\right)=0$ in $\mathbb{R}^{d}$ if and only if for each $A\in S^{++}_{d}$ such that $f(A)=1$, $w$ is a viscosity supersolution of  $c-A:\frac{1}{2}\nabla^{2}u=0$ in $\mathbb{R}^{d}$. 
\end{rem}
\begin{lemma}\label{lem 1.36}
	Let $A\in S^{++}_{d}$, $c\in \mathbb{R}$ and $w\in L^{1}_{\loc}(\mathbb{R}^{d})$ be lower semicontinuous on $\mathbb{R}^{d}$. Let $\eta \in C^{\infty}_{c}(\mathbb{R}^{d})$, $\supp(\eta)=\overline{B}_{1}(0)$, $\eta\geq 0$, $\eta(x)=\eta(-x)$, $\int_{\mathbb{R}^{d}}\eta\diff x=1$ and $\eta_{\varepsilon}(\cdot)=\varepsilon^{-d}\eta(\cdot/\varepsilon)$ for each  $\varepsilon>0$. Assume that $w$ is a viscosity supersolution of $c-\mathcal{T}\left(\frac{1}{2}\nabla^{2}u\right)=0$ in $\mathbb{R}^{d}$, where $\mathcal{T}$ is defined in (\ref{Trace by f}). Then for each $\varepsilon>0$, $c-\mathcal{T}\left(\frac{1}{2}\nabla^{2}w_{\varepsilon}\right) \geq 0$ in $\mathbb{R}^{d}$, where $w_{\varepsilon}(\cdot)=\int_{\mathbb{R}^{d}}w(y)\eta_{\varepsilon}(\cdot-y)\diff y$.
\end{lemma}
\begin{proof}
Let $A\in S^{++}_{d}$ be such that $f(A)=1$. Let $B\in S^{++}_{d}$ be the unique matrix such that $A=B^{-2}$ and for each $x \in \mathbb{R}^{d}$, define $v(x)=w(B^{-1}x)$ and $h(x)=v(x)-\frac{c}{d}|x|^{2}$. \\
\noindent \emph{Step 1.} We prove that the following assertions are equivalent. 
\begin{enumerate}[label=(\roman*)]
\item \label{item1supersolequiv}
$w$ is a viscosity supersolution of $c-A:\frac{1}{2}\nabla^{2}u= 0$ in 
$\mathbb{R}^{d}$.
\item \label{item2supersolequiv}
$v$ is a viscosity supersolution of $c-\frac{1}{2}\Delta u= 0$ in $\mathbb{R}^{d}$.
\item \label{item3supersolequiv}
$h$ is a viscosity supersolution of $-\frac{1}{2}\Delta u= 0$ in $\mathbb{R}^{d}$.
\end{enumerate}
The equivalence \ref{item1supersolequiv}$\Leftrightarrow$ \ref{item2supersolequiv} directly follows from $A:\frac{1}{2}\nabla^{2}\psi(x)= \frac{1}{2} \Delta \varphi(Bx)$ whenever $\psi(x)=\varphi(Bx)$ for each $x\in \mathbb{R}^{d}$ and $\varphi \in C^{2}(\mathbb{R}^{d})$, while the equivalence \ref{item2supersolequiv}$\Leftrightarrow$ \ref{item3supersolequiv} is straightforward. \\
\noindent \emph{Step 2.} We prove that the inequality $c-\frac{1}{2}\Delta v_{\varepsilon}\geq 0$ holds in $\mathbb{R}^{d}$, where $v_{\varepsilon}(\cdot)=\int_{\mathbb{R}^{d}}v(y)\widetilde{\eta}_{\varepsilon}(\cdot-y)\diff y$ and $\widetilde{\eta}_{\varepsilon}(\cdot)=(\det(B))^{-1}\eta_{\varepsilon}(B^{-1}\cdot)$. By definition,  $\widetilde{\eta}_{\varepsilon}\in C^{\infty}_{c}(\mathbb{R}^{d})$, $\supp(\widetilde{\eta}_{\varepsilon})\subset \{Bx \st x \in \overline{B}_{\varepsilon}(0)\}$, $\widetilde{\eta}_{\varepsilon}\geq 0$, $\widetilde{\eta}_{\varepsilon}(x)=\widetilde{\eta}_{\varepsilon}(-x)$ and $\int_{\mathbb{R}^{d}}\widetilde{\eta}_{\varepsilon}(x)\diff x =1$. Since $w$ is a viscosity supersolution of $c-A:\frac{1}{2}\nabla^{2}u = 0$ in $\mathbb{R}^{d}$ (see Remark~\ref{rem 1.30}), by the equivalence \ref{item1supersolequiv}$\Leftrightarrow$ \ref{item3supersolequiv} proved in Step~1, $h$ is a viscosity supersolution of $-\frac{1}{2}\Delta u=0$ in $\mathbb{R}^{d}$. Let us show that $h_{\varepsilon}(\cdot)=\int_{\mathbb{R}^{d}}h(y)\widetilde{\eta}_{\varepsilon}(\cdot-y)\diff y$ is superharmonic in $\mathbb{R}^{d}$, which is equivalent to the property $c-\frac{1}{2}\Delta v_{\varepsilon}\geq 0$ in $\mathbb{R}^{d}$. The proof is based on the fact that $h$ is viscosity superharmonic in $\mathbb{R}^{d}$ if and only if $\fint_{\partial B_{r}(x_{0})}h(x) \diff \mathcal{H}^{d-1}(x) \leq h(x)$ for each $x_{0} \in \mathbb{R}^{d}$ and $r>0$, namely $h$ is superharmonic in $\mathbb{R}^{d}$ (see Definition~\ref{defofsubharmonic}). Thus, changing the variables and applying Fubini's theorem, we obtain
	\begin{align*}
	\fint_{\partial B_{r}(x)}h_{\varepsilon}(y)\diff \mathcal{H}^{d-1}(y)&=\fint_{\partial B_{r}(x)}\diff \mathcal{H}^{d-1}(y)\int_{\mathbb{R}^{d}}h(z)\widetilde{\eta}_{\varepsilon}(y-z)\diff z\\	
	&=\int_{\mathbb{R}^{d}}\diff z\fint_{\partial B_{r}(x)}h(y-z)\widetilde{\eta}_{\varepsilon}(z)\diff \mathcal{H}^{d-1}(y)\\
	&\leq \int_{\mathbb{R}^{d}}h(x-z)\widetilde{\eta}_{\varepsilon}(z)\diff z=h_{\varepsilon}(x),
	\end{align*}
	where we have also used that $\fint_{\partial B_{r}(x-z)}h(y)\diff \mathcal{H}^{d-1}(y)\leq h(x-z)$, since $h$ is superharmonic in $\mathbb{R}^{d}$. Then, by Definition~\ref{defofsubharmonic}, $h_{\varepsilon}$ is superharmonic in $\mathbb{R}^{d}$. This, since $h_{\varepsilon}\in C^{2}(\mathbb{R}^{d})$, implies that $-\frac{1}{2}\Delta h_{\varepsilon}\geq 0$ in $\mathbb{R}^{d}$ and hence $c-\frac{1}{2}\Delta v_{\varepsilon}\geq 0$ in $\mathbb{R}^{d}$ as desired.\\
\noindent \emph{Step 3.} Using the result of Step~2, the fact that $A:\nabla^{2}\eta_{\varepsilon}(x)=\det(B)\Delta\widetilde{\eta}_{\varepsilon}(Bx)$ for each $x\in \mathbb{R}^{d}$ and changing the variables, for each $x \in \mathbb{R}^{d}$ we deduce the following
	\begin{align*}
	0\leq c- \frac{1}{2}\Delta v_{\varepsilon}(Bx)&=c-\frac{1}{2}\int_{\mathbb{R}^{d}}v(y)\Delta\widetilde{\eta}_{\varepsilon}(Bx-y)\diff y\\ &=c-\frac{(\det(B))^{-1}}{2}\int_{\mathbb{R}^{d}}v(y)A:\nabla^{2}\eta_{\varepsilon}(x-B^{-1}y)\diff y\\
	&=c-\frac{1}{2}\int_{\mathbb{R}^{d}}v(By)A:\nabla^{2}\eta_{\varepsilon}(x-y)\diff y=c-A:\frac{1}{2}\nabla^{2}w_{\varepsilon}(x).
	\end{align*}
	This, in view of Remark~\ref{rem 1.30} and the fact that $A\in S^{++}_{d}$ satisfying $f(A)=1$ was arbitrarily chosen (observe that a \emph{classical} supersolution is a viscosity supersolution, and a viscosity supersolution of class $C^{2}$ is a \emph{classical} supersolution), completes our proof of Lemma~\ref{lem 1.36}.
\end{proof}
Now we characterize the dual competitors in \eqref{thmdualA3A4} as viscosity solutions of  $-f^{*}(\frac{1}{2}\nabla^{2}u) = 0$ in $\mathbb{R}^{d}$.
\begin{prop}\label{prop G-ivariant=VS}
Let $\psi \in C_{b}(\mathbb{R}^{d})$ and \eqref{assumpcoercivef} hold. Then $-\psi=(-\psi)^{G}$ on $\mathbb{R}^{d}$ if and only if $\psi$ is a viscosity solution of $-f^{*}(\frac{1}{2}\nabla^{2}u) = 0$ in $\mathbb{R}^{d}$.
\end{prop}
\begin{rem}
In \cite[Theorem~4.2]{Tan-Touzi-2013}, in the Markovian case, it was proved that the dynamic value function is a viscosity solution of the Hamilton-Jacobi-Bellman equation. It is worth noting that, unlike \cite{Tan-Touzi-2013}, in our case $f^{*}$ is not continuous and $\dom(f^{*})$ is a closed convex subset of $S_{d}$. Furthermore, we provide an analytical proof of Proposition~\ref{prop G-ivariant=VS} that is different from the proof of \cite[Theorem~4.2]{Tan-Touzi-2013}, showing in addition that if $\psi$ is a viscosity solution of the Hamilton-Jacobi-Bellman equation $-f^{*}(\frac{1}{2}\nabla^{2}u)=0$ in $\mathbb{R}^{d}$, then $-\psi$ is invariant under the $G$-transform.
\end{rem}
\begin{proof}
By Proposition~\ref{prop visc supersol}, \eqref{Tf1} and Definition~\ref{def of subsolution} (where we allow $\mathscr{F}$ to be discontinuous and, even more, to become infinite), if $\psi \in C_{b}(\mathbb{R}^{d})$ and $-\psi=(-\psi)^{G}$ on $\mathbb{R}^{d}$, then $\psi$ is a viscosity solution of $-f^{*}(\frac{1}{2}\nabla^{2} u)=0$ in $\mathbb{R}^{d}$. 

Conversely, if $\psi \in C_{b}(\mathbb{R}^{d})$ is a viscosity solution of $-f^{*}(\frac{1}{2}\nabla^{2}u)=0$ in $\mathbb{R}^{d}$, by \eqref{Tf1} and Definition~\ref{def of subsolution}, $\psi$ is a viscosity supersolution of $1-\mathcal{T}(\frac{1}{2}\nabla^{2}u)=0$ in $\mathbb{R}^{d}$. Next, applying Lemma~\ref{lem 1.36}, we deduce that for each $\varepsilon>0$, it holds $1-\mathcal{T}(\frac{1}{2} \nabla^{2}\psi_{\varepsilon})\geq 0$ in $\mathbb{R}^{d}$, where $\psi_{\varepsilon}(\cdot)=\int_{\mathbb{R}^{d}}\psi(y)\eta_{\varepsilon}(\cdot-y)\diff y$ and  $\eta$ is the mollifier of Lemma~\ref{lem 1.36}. Hence $-f^{*}(\frac{1}{2}\nabla^{2}\psi_{\varepsilon}) = 0$ in $\mathbb{R}^{d}$ (see \eqref{Tf1}). This, according to Proposition~\ref{lem: 1.21}, yields the equality $-\psi_{\varepsilon}=(-\psi_{\varepsilon})^{G}$ on $\mathbb{R}^{d}$ for each $\varepsilon>0$. According to Proposition~\ref{prop lsc for G}, for each $x\in \mathbb{R}^{d}$ and $\varepsilon>0$, there exist $p, p_{\varepsilon} \in \mathcal{P}_{2}(\mathbb{R}^{d})$ such that
\begin{equation}\label{exmincoerciveGep}
(-\psi_{\varepsilon})^{G}(x)=\int_{\mathbb{R}^{d}}-\psi_{\varepsilon} \diff p_{\varepsilon} + G(x,p_{\varepsilon})
\end{equation}
and
\begin{equation}\label{exmincoerciveG}
(-\psi)^{G}(x)=\int_{\mathbb{R}^{d}} -\psi \diff p + G(x,p).
\end{equation}
Using \eqref{exmincoerciveG}, the fact that $\psi\in C_{b}(\mathbb{R}^{d})$, Lebesgue's dominated convergence theorem, \eqref{exmincoerciveGep} and the equality $-\psi_{\varepsilon}=(-\psi_{\varepsilon})^{G}$, for each $x\in \mathbb{R}^{d}$, we have
\begin{align*}
(-\psi)^{G}(x)&=\int_{\mathbb{R}^{d}}-\psi \diff p + G(x,p)\\ &= \lim_{\varepsilon \to 0+}\int_{\mathbb{R}^{d}}-\psi_{\varepsilon} \diff p + G(x,p)\\
& \ge \lim_{\varepsilon \to 0+}\int_{\mathbb{R}^{d}}-\psi_{\varepsilon} \diff p_{\varepsilon} + G(x,p_{\varepsilon})\\
&= \lim_{\varepsilon \to 0+}-\psi_{\varepsilon}(x)\\
&=-\psi(x).
\end{align*}
This, in view of the fact that $-\psi\geq (-\psi)^{G}$ on $\mathbb{R}^{d}$, proves the equality $-\psi=(-\psi)^{G}$ on $\mathbb{R}^{d}$ and completes our proof of Proposition~\ref{prop G-ivariant=VS}.
\end{proof}
The next corollary is a direct consequence of Theorem~\ref{thmdualHA1-A4} and Proposition~\ref{prop G-ivariant=VS}.
\begin{cor}\label{corHthroughVS}
Let $\mu, \nu \in \mathcal{P}_{2}(\mathbb{R}^{d})$ and \eqref{assumpcoercivef}, \eqref{assumpgrowthf} hold. Then
\[
H(\mu,\nu)=\sup\left\{\langle \nu -\mu, \psi \rangle \st \psi \in C_{b}(\mathbb{R}^{d})\; \text{is a viscosity solution of}\; -f^{*}\left(\frac{1}{2} \nabla^{2} u\right)=0\,\ \text{in}\,\ \mathbb{R}^{d}\right\}.
\]
\end{cor}

\subsection{$F=H$ under the assumptions \eqref{assumpcoercivef}, \eqref{assumpgrowthf}}

\begin{theorem}\label{th F=G}
	Let \eqref{assumpcoercivef} and \eqref{assumpgrowthf} hold. Then $F=H$ on $\mathcal{P}_{2}(\mathbb{R}^{d})\times \mathcal{P}_{2}(\mathbb{R}^{d})$.
\end{theorem}
\begin{proof}
	Let $\psi \in C_{b}(\mathbb{R}^{d})$ be a viscosity solution of $-f^{*}(\frac{1}{2}\nabla^{2}u)=0$ in $\mathbb{R}^{d}$. Then, performing the smoothing procedure as in the proof of Proposition~\ref{prop G-ivariant=VS}, we obtain a sequence of functions $\psi_{n} \in C^{2}_{b}(\mathbb{R}^{d})$ such that $-f^{*}(\frac{1}{2}\nabla^{2}\psi_{n})=0$ in $\mathbb{R}^{d}$, $\int_{\mathbb{R}^{d}}\psi_{n} \diff \mu \to \int_{\mathbb{R}^{d}} \psi \diff \mu$ and $\int_{\mathbb{R}^{d}} \psi_{n} \diff \nu \to \int_{\mathbb{R}^{d}} \psi \diff \nu$ as $n \to +\infty$. Using this, Corollary~\ref{corHthroughVS} and Theorem~\ref{prop: 1.13}, we obtain $H(\mu,\nu)\le F(\mu, \nu)$. Therefore, $H(\mu,\nu)=F(\mu,\nu)$, since $H(\mu,\nu)\geq F(\mu,\nu)$ by Proposition~\ref{prop lower bound}. This completes our proof of Theorem~\ref{th F=G}.
	\end{proof}
\noindent \textbf{Example}~\ref{tracefirstexample}.
	For each $A\in S_{d}$, 
	\begin{equation}\label{exoffbytrace}
	f(A)=\begin{cases}
	\tr(A)\,\ &\text{if}\,\  A\in S^{+}_{d},\\
	+\infty \,\ &\text{otherwise}.
	\end{cases}
	\end{equation}
	Clearly, $f$ satisfies \eqref{assumpdom}-\eqref{assumpgrowthf}. Then for each $A\in S_{d}$, 
	\begin{equation}\label{symAtr}
	\mathcal{T}(A)\leq 1 \Leftrightarrow A - I_{d}\leq 0. 
	\end{equation}
	Indeed, $A:M\leq 1$ for each $M\in S^{++}_{d}$ such that $\tr(M)=1$ if and only if $(A-I_{d}):M\leq 0$ for each $M\in S^{+}_{d}$,  which is equivalent to say that $A-I_{d}\le 0$. Using this and \eqref{Trace by f}, one deduces \eqref{symAtr}.
		
	Given $\mu, \nu \in \mathcal{P}_{2}(\mathbb{R}^{d})$, applying Theorem~\ref{prop: 1.13}, \eqref{Tf1} and \eqref{symAtr}, we compute
	\begin{align*}
	F(\mu, \nu)&=\sup\left\{\langle \nu-\mu, \psi\rangle \st \psi \in C^{2}_{b}(\mathbb{R}^{d}),\; \frac{1}{2}\nabla^{2}\psi(x)-I_{d}\leq 0 \,\ \forall x\in \mathbb{R}^{d}\right\}\\
	&=\left\langle \nu-\mu, |\cdot|^{2} \right\rangle + \sup\{\langle \nu - \mu, u\rangle \st u\in C^{2}_{b}(\mathbb{R}^{d}),\; u \,\ \text{is concave on}\,\ \mathbb{R}^{d}\}\\
	&=\begin{cases}
    \var(\nu)-\var(\mu) \,\ &\text{if}\,\  \mu\le_{c}\nu, \\ \numberthis \label{expressionftrace1}
    +\infty \,\ &\text{otherwise}    
    \end{cases}\\
    &=\begin{cases}
\displaystyle f\left(\int_{\mathbb{R}^{d}} x\otimes x \diff (\nu(x)- \mu(x))\right) &\text{if}\,\ \mu\le_{c} \nu, \\ \numberthis \label{expressionftrace2}
+\infty \,\ &\text{otherwise},
\end{cases}
    \end{align*}
where we have used that $\langle \nu- \mu, |\cdot|^{2}\rangle =\var(\nu)-\var(\mu)$ if $\mu\le_{c} \nu$ (in this case $[\mu]=[\nu]$). For each $a \in \mathbb{R}$ and $\xi \in \mathbb{R}^{d}$, the function  $\psi(x)=a+\xi\cdot x + |x|^{2}$ is a dual optimizer for $F(\mu,\nu)$, which is understood in the setting of \eqref{Dual}, when $F(\mu,\nu)<+\infty$ and  $f$ is defined by \eqref{exoffbytrace}.  By Theorem~\ref{th F=G}, $F=H$ when $f$ is defined by \eqref{exoffbytrace}.

\noindent \textbf{Example}~\ref{levsecondexample}.
	For each $A\in S_{d}$, 
	\begin{equation}\label{flmaxexample}
	f(A)=\begin{cases}
	\lambda_{\max}(A)\,\ &\text{if}\,\  A\in S^{+}_{d},\\
	+\infty \,\ &\text{otherwise},
	\end{cases}
	\end{equation}
	where $\lambda_{\max}(A)=\max\{Ax\cdot x\st |x|=1\}$ is the largest eigenvalue of $A$. Clearly, $f$ satisfies \eqref{assumpdom}-\eqref{assumpgrowthf}. Every matrix $A\in S_{d}$ has $d$ real eigenvalues $\lambda_{1}, \dotsc, \lambda_{d}$ such that $|\lambda_{1}|\geq |\lambda_{2}|\geq \dotsc \geq |\lambda_{d}|$ with corresponding eigenvectors $v_{1},\dotsc, v_{d} \in \mathbb{R}^{d}$ forming an orthonormal basis of $\mathbb{R}^{d}$. Denote $J_{+}=\{j\in\{1,\dotsc,d\} \st \lambda_{j}\geq 0\}$, $J_{-}=\{1,\dotsc,d\}\backslash J_{+}$, $A^{+}=\sum_{j \in J_{+}}\lambda_{j}v_{j}\otimes v_{j}$, $A^{-}=\sum_{j\in J_{-}}\lambda_{j}v_{j}\otimes v_{j}$ so that $A=A^{+}+A^{-}$.  We claim that 
	\begin{equation}\label{lev}
	\mathcal{T}(A)\leq 1 \Leftrightarrow \sum_{j\in J_{+}}\lambda_{j}\leq 1. 
	\end{equation}
	Indeed, if $A \in S_{d}$, then $A=PDP^{\mathrm{T}}$, where  $D=\textup{diag}(\lambda_{1},\dotsc,\lambda_{d})$ and $P=(v_{1}\dotsc v_{d})\in S_{d}$ is the orthonormal matrix with columns $v_{1},\dotsc, v_{d}$. Let $\varepsilon \in (0,1)$ be fairly small. Define $M=PEP^{\mathrm{T}}$, where $E=(e_{ij})_{i,j=1}^{d} \in S^{++}_{d}$ is the diagonal matrix satisfying $e_{ii}=\varepsilon$ if $\lambda_{i}<0$ and $e_{ii}=1$ otherwise. Next, we compute
    \begin{align*}
    A:M=\tr(PDP^{\mathrm{T}}PEP^{\mathrm{T}})=\tr(DE)=\sum_{j\in J_{+}}\lambda_{j} + \varepsilon \sum_{j\in J_{-}} \lambda_{j}.
    \end{align*}
   Thus, assuming that $\mathcal{T}(A)\leq 1$ and letting $\varepsilon \to 0+$, we have $\sum_{j\in J_{+}}\lambda_{j}\leq 1$. Conversely, suppose that $\sum_{j\in J_{+}}\lambda_{j}\leq 1$. For each $M\in S^{++}_{d}$ such that $\lambda_{\max}(M)=1$,
   \begin{align*}
   A:M=(A^{+}+A^{-}):M\leq\tr(A^{+}M)&=\sum_{j\in J_{+}}\lambda_{j}Mv_{j}\cdot v_{j}\leq \sum_{j\in J_{+}}\lambda_{j}\lambda_{\max}(M)\leq 1.
   \end{align*}
   This proves our claim.

   Given $\mu, \nu \in \mathcal{P}_{2}(\mathbb{R}^{d})$, using Theorem~\ref{prop: 1.13}, \eqref{Tf1} and \eqref{lev}, we deduce that
   \begin{equation}\label{Fwhenf=largesteigenvalue}
   F(\mu,\nu)=
   \sup\left\{\langle \nu-\mu, \psi \rangle \st \psi \in C^{2}_{b}(\mathbb{R}^{d}),\; \sum_{j\in J_{+}}\lambda_{j}\left(\frac{1}{2}\nabla^{2}\psi(x)\right)\leq 1 \,\ \forall x \in \mathbb{R}^{d}\right\}.
   \end{equation}
Since $\frac{1}{d}\tr(A)\leq \lambda_{\max}(A)\leq \tr(A)$ for each $A\in S^{+}_{d}$, we have $\frac{1}{d}\widetilde{F}(\mu,\nu)\leq F(\mu,\nu) \leq \widetilde{F}(\mu,\nu)$, where 
\[
\widetilde{F}(\mu,\nu)=\inf\left\{\int_{\mathbb{R}^{d}}\tr\left(\frac{\diff \lambda}{\diff |\lambda|}\right)\diff|\lambda| \, | \, \textup{tr}\left(\frac{1}{2} \nabla^{2}\lambda\right)=\nu-\mu\right\}. 
\]
In particular, if $d=1$, then $\widetilde{F}(\mu, \nu) =F(\mu, \nu)$ and the expression of $F(\mu, \nu)$ in terms of $\mu$ and $\nu$ comes from \eqref{expressionftrace1}, \eqref{expressionftrace2}. 
Then, in view of \eqref{expressionftrace1}, 
\begin{equation*}
F(\mu,\nu)<+\infty \Leftrightarrow \mu\leq_{c}\nu.
\end{equation*}
Assume that $\mu \le_{c} \nu$. Define 
\[
\alpha=\sup\{\langle \nu -\mu, \psi \rangle \st \psi \in C^{2}_{b}(\mathbb{R}^{d})\,\ \text{is convex and}\,\ \Delta \psi=2 \,\ \text{in} \,\ \mathbb{R}^{d}\}.
\] 
For each $\xi \in \partial B_{1}(0)$, $x \in \mathbb{R}^{d} \mapsto |x\cdot \xi|^{2}$ belongs to $C^{2}_{b}(\mathbb{R}^{d})$, is convex and $\Delta |x\cdot \xi|^{2}=2$ in $\mathbb{R}^{d}$. Then
   \begin{equation}\label{estfbyalpha}
   \begin{split}
   f\left(\int_{\mathbb{R}^{d}}x\otimes x \diff (\nu(x)-\mu(x))\right) &=\max_{\xi \in \partial B_{1}(0)}\xi \otimes \xi : \int_{\mathbb{R}^{d}} x\otimes x \diff(\nu(x)-\mu(x)) \\
   &=\max_{\xi \in \partial B_{1}(0)} \int_{\mathbb{R}^{d}} |x\cdot \xi|^{2} \diff(\nu(x)-\mu(x))\le \alpha. 
   \end{split}
   \end{equation}
Let $\psi \in C^{2}_{b}(\mathbb{R}^{d})$ be a convex function such that $\Delta\psi=2$ in $\mathbb{R}^{d}$.  Define $\varphi=\psi - \frac{1}{d}|\cdot|^{2} \in C^{2}_{b}(\mathbb{R}^{d})$. Then $\varphi$ is harmonic in $\mathbb{R}^{d}$. Since $\varphi \in \Phi_{2}(\mathbb{R}^{d})$, using \cite[Theorem~2.10]{Gilbarg-Trudinger-1998}, we deduce that there exists $A \in S_{d}$ such that $\tr(A)=0$, $A+\frac{I_{d} }{d} \geq 0$ and $\nabla^{2} \varphi(x)= 2A$ for each $x \in \mathbb{R}^{d}$. Then there exist $a \in \mathbb{R}$ and $\zeta \in \mathbb{R}^{d}$ such that $\psi(x)=a+\zeta\cdot x + (A+\frac{I_{d}}{d}):x \otimes x$ for each $x \in \mathbb{R}^{d}$. Thus, taking into account that $\mu \leq_{c} \nu$, we have
\begin{equation}\label{alphanewmatrix}
\alpha=\sup\left\{\int_{\mathbb{R}^{d}}\left(A+\frac{I_{d}}{d}\right):x \otimes x \diff (\nu (x)-\mu(x))\st A+\frac{I_{d}}{d} \in S^{+}_{d},\; \tr(A)=0\right\}.
\end{equation}
For each $A+\frac{I_{d}}{d} \in S^{+}_{d}$, where $\tr(A)=0$, there exist eigenvalues $\lambda_{1} \geq \dotsc >\lambda_{d} \geq 0$ and corresponding eigenvectors $v_{1}, \dotsc, v_{d}$ forming an orthonormal basis of $\mathbb{R}^{d}$ such that we have $A+\frac{I_{d}}{d}=\sum_{i=1}^{d} \lambda_{i} v_{i}\otimes v_{i}$ and $\sum_{i=1}^{d}\lambda_{i}=1$. This, together with \eqref{Fwhenf=largesteigenvalue}, \eqref{estfbyalpha} and \eqref{alphanewmatrix}, yields
\begin{equation*}
f\left(\int_{\mathbb{R}^{d}} x \otimes x \diff (\nu(x)-\mu(x)) \right) = \alpha \le F(\mu, \nu).
\end{equation*}
Next, let $\psi \in C^{2}_{b}(\mathbb{R}^{d})$ be such that $\sum_{j \in J_{+}}\lambda_{j}(\frac{1}{2}\nabla^{2}\psi(x)) \leq 1$ for each $x \in \mathbb{R}^{d}$. Then there exists a subharmonic function $u \in C^{2}_{b}(\mathbb{R}^{d})$ such that $\Delta u = 2-\Delta \psi \geq0$. Using \cite[Theorem~2.10]{Gilbarg-Trudinger-1998}, we obtain a matrix $A \in S_{d}$ such that $\tr(A)=0$ and $\nabla^{2}u(x)+\nabla^{2}\psi(x)=2A +\frac{2I_{d}}{d}$ for each $x \in \mathbb{R}^{d}$. Then there exist $a \in \mathbb{R}$ and $\zeta \in\mathbb{R}^{d}$ such that $\psi(x)=a+\zeta \cdot x + (A+\frac{I_{d}}{d}): x \otimes x - u(x)$ for each $x \in \mathbb{R}^{d}$. Taking into account the constraint in \eqref{Fwhenf=largesteigenvalue} and the assumption $\mu\le_{c} \nu$, we obtain
\begin{equation*}
\begin{split} 
F(\mu,\nu)=\sup\Biggl\{\int_{\mathbb{R}^{d}} \Bigl(\Bigl(A+\frac{I_{d}}{d}\Bigr): x\otimes x &- u(x) \Bigr) \diff (\nu(x)-\mu(x))\st A \in S_{d},\; \tr(A)=0,\; u\in C^{2}_{b}(\mathbb{R}^{d}), \\ &\Delta u \geq 0 \,\ \text{in}\,\ \mathbb{R}^{d}\,\ \text{and}\,\ \sum_{j \in J_{+}} \lambda_{j}\Bigl(\Bigl(A+\frac{I_{d}}{d}\Bigr)-\frac{1}{2} \nabla^{2} u(x)\Bigr) \leq 1 \,\ \forall x \in \mathbb{R}^{d} \Biggr\}.
\end{split}
\end{equation*}
\noindent \textbf{Conjecture.} If $\mu \le_{c} \nu$, then
\[
f\left(\int_{\mathbb{R}^{d}} x \otimes x \diff (\nu(x)-\mu(x)) \right) = F(\mu, \nu).
\]
As in the example~\ref{tracefirstexample}, in our opinion, a dual optimizer for $F(\mu, \nu)$ should be sought among convex functions when $f$ is defined by \eqref{flmaxexample} and $\mu \le_{c} \nu$. We expect that such an optimizer exists and that its Laplacian is equal to 2 in $\mathbb{R}^{d}$. In dimension 1 and in general when $\mu\le_{sh} \nu$, the above conjecture is true and
\begin{align*}
F(\mu, \nu) & = \frac{1}{d} \var(\nu) - \frac{1}{d} \var(\mu),
\end{align*}
where for each $a \in \mathbb{R}$, for each $\zeta \in \mathbb{R}^{d}$ and for each $A\in S_{d}$ such that $A+\frac{I_{d}}{d}\geq 0$ and $\tr(A)=0$, the function $\psi(x)=a+\zeta \cdot x + (A+\frac{I_{d}}{d}):x \otimes x$ is a dual optimizer for $F(\mu, \nu)$, which is understood in the setting of \eqref{Dual}. 

By Theorem~\ref{th F=G}, $F=H$ when $f$ is defined by \eqref{flmaxexample}. 
   
\section{Acknowledgments}
I would like to warmly thank Guy Bouchitté for introducing me to this problem, for stimulating discussions, valuable comments on the manuscript and also for bringing \cite{Huesmann-Trevisan-2019} to my attention. Some discussions with Thierry Champion and Jean-Jacques Alibert were useful. I am also grateful to the anonymous referee for carefully reading the paper and for comments and suggestions that helped improve the paper. This work was partially supported by the Projet de Recherche T.0229.21 ``Singular Harmonic Maps and Asymptotics of Ginzburg-Landau Relaxations" of the Fonds de la Recherche Scientifique--FNRS. 
\titleformat{\section}{\normalfont\Large\bfseries}{\appendixname~\thesection.}{0.7em}{}
\begin{small}
\bibliography{bib01}
\bibliographystyle{plain}
\end{small}
\end{document}